\documentclass[oneside,english,reqno]{amsart}
\usepackage{ae,aecompl}
\usepackage[T1]{fontenc}
\usepackage[latin9]{inputenc}
\usepackage{color}
\usepackage[english]{babel}
\usepackage{textcomp}
\usepackage{mathrsfs}
\usepackage{mathtools}
\usepackage{amstext}
\usepackage{amsthm}
\usepackage{cite}
\usepackage{amssymb}
\usepackage{stmaryrd}
\usepackage{setspace}
\usepackage{enumitem}
\usepackage{adjustbox}

\newcommand\mathtight{
	\thinmuskip=1mu
	\medmuskip=2mu
	\thickmuskip=4mu
	\renewcommand\quad{\hspace*{0.5em}}
	\renewcommand\qquad{\quad\quad}
}%

\usepackage[unicode=true,pdfusetitle, bookmarks=true,bookmarksnumbered=false,bookmarksopen=false, breaklinks=false,pdfborder={0 0 0},backref=false,colorlinks=true,linkcolor=blue] {hyperref}
 \usepackage{algorithm}
 \usepackage[noend]{algpseudocode}
 \usepackage[all]{hypcap} 
\usepackage{cleveref}

\usepackage{ifpdf}
\usepackage{color}

\makeatletter
\newcounter{algorithmicH}
\let\oldalgorithmic\algorithmic
\renewcommand{\algorithmic}{%
  \stepcounter{algorithmicH}
  \oldalgorithmic}
\renewcommand{\theHALG@line}{ALG@line.\thealgorithmicH.\arabic{ALG@line}}
\makeatother


\newcommand{\dom}{\mathop{\rm dom}\nolimits}

\newcommand{\id}{\mathop{\rm Id}\nolimits}

\newcommand{\prox}{\mathop{\rm prox}\nolimits}
\newcommand{\fix}{\mathop{\rm Fix}\nolimits}

\newcommand{\minimize}{\operatorname{minimize}}  
\newcommand{\maximize}{\operatorname{maximize}}  

\newcommand{\Nn}{{\rm{I\!N}}}


\newcommand{\HH}{\mathcal H}
\newcommand{\KK}{\mathcal K}

\newcommand{\ie}{\emph{i.e.}}

\newcommand{\zer}{\mathop{\rm zer}\nolimits}
\newcommand{\gra}{\mathop{\rm gra}\nolimits}
\newcommand{\ran}{\mathop{\rm ran}\nolimits}

\DeclareMathOperator*{\argmin}{\arg\!\min}
\newcommand{\stt}{\rm subject\ to}

\newtheorem{thm}{Theorem}[section]
\newtheorem{lem}{Lemma}[section]
\newtheorem{prop}{Proposition}[section]

\theoremstyle{remark}
\newtheorem{rem}{Remark}[section]

\theoremstyle{definition}
\newtheorem{ass}{Assumption}[section]

\newtheorem{deff}{Definition}[section]


\crefname{thm}{Thm.}{Thm.}
\Crefname{thm}{Theorem}{Theorems}
\crefformat{thm}{#2Thm. #1#3}
\Crefformat{thm}{#2Theorem #1#3}
\crefmultiformat{thm}{#2Thm. #1#3}{ and #2#1#3}{, #2#1#3}{ and #2#1#3}
\Crefmultiformat{thm}{#2Theorems #1#3}{ and #2#1#3}{, #2#1#3}{ and #2#1#3}

\crefname{prop}{Prop.}{Prop.}
\Crefname{prop}{Proposition}{Propositions}
\crefformat{prop}{#2Prop. #1#3}
\Crefformat{prop}{#2Proposition #1#3}
\crefmultiformat{prop}{#2Prop. #1#3}{ and #2#1#3}{, #2#1#3}{ and #2#1#3}
\Crefmultiformat{prop}{#2Propositions #1#3}{ and #2#1#3}{, #2#1#3}{ and #2#1#3}

\crefname{coro}{Cor.}{Cro.}
\Crefname{coro}{Corollary}{Corollaries}
\crefformat{coro}{#2Cor. #1#3}
\Crefformat{coro}{#2Corollary #1#3}
\crefmultiformat{coro}{#2Cor. #1#3}{ and #2#1#3}{, #2#1#3}{ and #2#1#3}
\Crefmultiformat{coro}{#2Corollaries #1#3}{ and #2#1#3}{, #2#1#3}{ and #2#1#3}

\crefname{lem}{Lem.}{Lem.}
\Crefname{lem}{Lemma}{Lemmas}
\crefformat{lem}{#2Lem. #1#3}
\Crefformat{lem}{#2Lemma #1#3}
\crefmultiformat{lem}{#2Lem. #1#3}{ and #2#1#3}{, #2#1#3}{ and #2#1#3}
\Crefmultiformat{lem}{#2Lemmas #1#3}{ and #2#1#3}{, #2#1#3}{ and #2#1#3}

\crefname{algorithm}{Alg.}{Alg.}
\Crefname{algorithm}{Algorithm}{Algorithms}
\crefformat{algorithm}{#2Alg. #1#3}
\Crefformat{algorithm}{#2Algorithm #1#3}
\crefmultiformat{algorithm}{#2Alg. #1#3}{ and #2#1#3}{, #2#1#3}{ and #2#1#3}
\Crefmultiformat{algorithm}{#2Algorithms #1#3}{ and #2#1#3}{, #2#1#3}{ and #2#1#3}

\crefname{rem}{Rem.}{Rem.}
\Crefname{rem}{Remark}{Remarks}
\crefformat{rem}{#2Rem. #1#3}
\Crefformat{rem}{#2Remark #1#3}
\crefmultiformat{rem}{#2Rem. #1#3}{ and #2#1#3}{, #2#1#3}{ and #2#1#3}
\Crefmultiformat{rem}{#2Remarks #1#3}{ and #2#1#3}{, #2#1#3}{ and #2#1#3}

\crefname{ass}{Ass.}{Ass.}
\Crefname{ass}{Assumption}{Assumption}
\crefformat{ass}{#2Ass. #1#3}
\Crefformat{ass}{#2Assumption #1#3}

\crefname{deff}{Def.}{Def.}
\Crefname{deff}{Definition}{Definition}
\crefformat{deff}{#2Def. #1#3}
\Crefformat{deff}{#2Definition #1#3}

\Crefname{section}{Section}{Sections}
\Crefformat{section}{#2Section #1#3}
\Crefmultiformat{section}{#2Sections #1#3}{ and #2#1#3}{, #2#1#3}{ and #2#1#3}

\Crefname{enumi}{Assumption}{Assumptions}
\Crefformat{enumi}{Assumption #21(#1)#3}
\Crefmultiformat{enumi}{Assumptions (#2#1#3)}{ and (#2#1#3)}{, (#2#1#3)}{ and (#2#1#3)}
\crefrangeformat{enumi}{Assumptions ~(#3#1#4) to~(#5#2#6)}

\Crefname{figure}{Figure}{Figures}
\Crefformat{figure}{#2Figure #1#3}
 
\newcommand\proofRef[2][Appendix]{\hyperref[#2]{#1}}
\newenvironment{Proof}[1]{\begin{proof}[#1]}{\end{proof}}
 \usepackage[foot]{amsaddr}

 \begin{document}
\title[Asymmetric Forward-Backward-Adjoint Splitting]{Asymmetric Forward-Backward-Adjoint Splitting for solving monotone inclusions involving three operators}
\author[P. Latafat]{Puya Latafat}
\address[P. Latafat]{IMT School for Advanced Studies Lucca, Piazza San Francesco 19, 55100 Lucca, Italy}
\email{puya.latafat@imtlucca.it}
\author[P. Patrinos]{Panagiotis Patrinos}
\address[P. Patrinos]{Department of Electrical Engineering (ESAT-STADIUS), KU Leuven, Kasteelpark  Arenberg 10, 3001 Leuven-Heverlee, Belgium}
\email{panos.patrinos@esat.kuleuven.be}

\begin{abstract}

In this work we propose a new splitting technique, namely Asymmetric Forward-Backward-Adjoint splitting,
for solving monotone inclusions involving three terms, a maximally monotone, a cocoercive and a bounded linear operator. Classical operator splitting methods, like  Douglas-Rachford and Forward-Backward splitting are special cases of our new algorithm. Asymmetric Forward-Backward-Adjoint splitting  unifies, extends and sheds light on the connections between many seemingly unrelated primal-dual algorithms for solving structured convex optimization problems proposed in recent years. More importantly, it greatly extends the scope and applicability of splitting techniques to a wider variety of problems. One important special case leads to a Douglas-Rachford type scheme that includes a third cocoercive operator. 
\\
\\
\noindent \textbf{Keywords.} {convex optimization, monotone inclusion, operator splitting, primal-dual algorithms }
\end{abstract}
\maketitle
\section{Introduction} \label{sec:1}

 This paper considers two types of general problems. The focus of the first part of the paper is on 
 solving monotone inclusion problems of the form 
\begin{align}
0\in Ax+Mx+Cx, \label{eq:main inclusion}
\end{align}
where $A$ is a maximally monotone operator, $M$ is a bounded linear operator and $C$ is cocoercive\footnote{ $C$
is $\beta$-cocoercive with respect to the $P$ norm if for some $\beta\in ]0,+\infty[$ the following holds 
		\[
		(\forall z\in\HH)(\forall z^{\prime}\in\HH)\quad\langle Cz-Cz^{\prime},z-z^{\prime}\rangle\geq \beta\|Cz-Cz^{\prime}\|_{P^{-1}}^{2}.
		\]}. The most well known algorithms for solving monotone inclusion problems are Forward-Backward splitting (FBS), Douglas-Rachford splitting (DRS) and Forward-Backward-Forward splitting (FBFS) \cite{moreau1965proximite,lions1979splitting,combettes2011proximal,parikh2013proximal,boyd2011distributed,tseng2000modified}. The operator splitting schemes FBS and DRS are not well equipped to handle~\eqref{eq:main inclusion} since they are designed for monotone inclusions involving the sum of two operators. The FBFS splitting can solve~\eqref{eq:main inclusion} by considering $M+C$ as one Lipschitz continuous operator. However, being blind to the fact that $C$ is cocoercive, it would require two evaluations of $C$ per iteration. Many other variations of the three main splittings have been proposed over time that can be seen as  intelligent applications of these classical methods (see for example \cite{briceno2015forward,chambolle2011first,briceno2011monotone+,vu2013splitting,condat2013primal}).

The main contribution of the paper is a new algorithm called \emph{Asymmetric-Forward-Backward-Adjoint splitting} (AFBA) to solve the monotone inclusion~\eqref{eq:main inclusion}, without resorting to any kind of reformulation of the problem. One important property of AFBA is that it includes asymmetric preconditioning. This gives great flexibility to the algorithm, and indeed it is the key for recovering and unifying existing primal-dual proximal splitting schemes for convex optimization and devising new ones. More importantly, it can deal with problems involving 3 operators, one of which is cocoercive. It is observed that  FBS, DRS, the Proximal Point Algorithm (PPA) can be derived as special cases of our method. Another notable special case is the method proposed by Solodov and Tseng for variational inequalities in~\cite[Algorithm 2.1]{solodov1996modified}. Moreover,  when the cocoercive term, $C$, is absent in~\eqref{eq:main inclusion}, in a further special case, it coincides with the FBFS when its Lipschitz operator is skew-adjoint. Recently a new splitting scheme was proposed in~\cite{davis2015three} for solving monotone inclusions involving the sum of three operators, one of which is cocoercive. This method can be seen as Douglas-Rachford splitting with an extra forward step for the cocoercive operator and at this point it seems that it can not be derived by manipulating one of the main three splitting algorithms. As a special case of our scheme, we propose an algorithm that also bares heavy resemblance to the classic Douglas-Rachford splitting with an extra forward step (see \Cref{Algorithm-6}). The proposed algorithm is different than the one of~\cite{davis2015three}, in that the forward step precedes the two backward updates.

As another contribution of the paper, big-$O(1/(n+1))$ and little-$o(1/(n+1))$ convergence rates are derived for AFBA (see \Cref{Thm: conv-rates}). It is observed that in many cases these convergence rates are guaranteed under mild conditions. In addition, under metric subregularity of the underlying operator, linear convergence is guaranteed without restrictions on the parameters (see \Cref{Thm: conv-rates-ii}). Given that AFBA generalizes a wide range of algorithms, this analysis provides a systematic way to deduce convergence rates for many algorithms. 

The focus of the second half of the paper, in the simpler form, is on solving convex optimization problems of the form   
\begin{align}
\underset{x\in\HH}{\minimize}\ f(x)+g(Lx)+h(x), \label{eq:convex}
\end{align}
where $\HH$ is a real Hilbert space and $L$ is a bounded linear operator. The functions $f$, $g$ and $h$ are proper, closed convex functions and in addition $h$ is Lipschitz differentiable. 
The equivalent monotone inclusion problem takes the form of finding $x\in\HH$ such that
\begin{align*}
0\in Ax+L^{*} B Lx+Cx, 
\end{align*}
where $A=\partial f$, $B=\partial g$ are maximally monotone operators defined on $\HH$ and the operator $C=\nabla h $ is cocoercive.
 Similar to the problem~\eqref{eq:main inclusion}, the classical methods FBS, DRS, FBFS are not suitable for solving problems of the form~\eqref{eq:convex} (without any reformulation) because they all deal with problems involving two operators. Furthermore, these methods usually require calculation of the proximal mapping of the composition of a function with a linear operator  which is not trivial in general or requires matrix inversion (see~\cite{parikh2013proximal} for a survey on proximal algorithms). In the recent years in order to solve problem~\eqref{eq:convex}, with or without the cocoercive term, many authors have considered the corresponding saddle point problem. This approach yields the primal and dual solutions simultaneously (hence the name primal-dual splittings) and eliminates the need to calculate the proximal mapping of a linearly composed function. The resulting algorithms only require matrix vector products, gradient and proximal updates (see \cite{combettes2012primal,chambolle2011first,condat2013primal,vu2013splitting,boct2014recent} for more discussion). We follow the same approach and notice that it is quite natural to embed the optimality condition of the saddle point problem associated to \eqref{eq:convex} in the form of the monotone inclusion \eqref{eq:main inclusion}. Subsequently, by appealing to AFBA, we can generate new algorithms and recover many existing methods, such as the ones proposed in \cite{chambolle2011first,condat2013primal,vu2013splitting,briceno2011monotone+,drori2015simple,he2012convergence}, as special cases. In many of the cases, we extend the range of acceptable stepsizes and relaxation parameters under which the methods are convergent. Additionally, the convergence rates for these methods are implied by our results for AFBA. 

The paper is organized as follows. \Cref{sec:2} is devoted to introducing notation and reviewing basic definitions. In~\Cref{sec:3}, we present and analyze the convergence and rate of convergence of AFBA. Its relation to classical splitting methods is discussed in \Cref{sec:special cases}.  In~\Cref{sec:Saddle-point-problem}, we consider the saddle point problem associated to a generalization of~\eqref{eq:convex}. By applying AFBA and properly choosing the parameters we are able to generate a large class of algorithms. We then consider some important special cases and discuss their relation to existing methods. These connections are summarized in the form of a diagram in~\Cref{fig1}.

\section{Backround and Preliminary Results} \label{sec:2}
In this section we recap the basic definitions and results that will be needed subsequently (see \cite{bauschke2011convex} for detailed discussion).\\
Let $\mathcal{H}$ and $\mathcal{G}$ be real Hilbert spaces. We denote the scalar product and the induced norm of a Hilbert space by $\langle\cdot,\cdot\rangle$ and $\|\cdot\|$ respectively. $\id$  denotes the identity operator. We denote
by $\mathscr{B}(\HH,\mathcal{G})$ the space of bounded linear operators
from $\HH$ to $\mathcal{G}$ and set $\mathscr{B}(\HH)=\mathscr{B}(\HH,\mathcal{\HH})$.
The space of self-adjoint operators is denoted by \linebreak $\mathcal{S}(\HH)=\{L\in\mathscr{B}(\HH)|L=L^{*}\}$,
where $L^{*}$ denotes the adjoint of $L$. The \emph{Loewner partial
	ordering} on $\mathcal{S}(\HH)$ is denoted by $\succeq$. 
Let $\tau\in]0,+\infty[$ and define the space of $\tau$-strongly positive
self-adjoint operators by $\mathcal{S}_\tau(\HH)=\{U\in\mathcal{S}(\HH)|U\succeq\tau\id\}.$
For $U\in\mathcal{S}_\tau(\HH)$, define the scalar product and norm by $\langle x,y\rangle_{U}=\langle x,Uy\rangle$,
and \linebreak ${\mathtight\|x\|_{U}=\sqrt{\langle x,Ux\rangle}}$. We also define the Hilbert space $\HH_{U}$ by endowing $\HH$ with the scalar product $\langle x,y\rangle_{U}$. 
One has ${\mathtight |\langle x,y\rangle|\leq\|x\|_{U}\|y\|_{U^{-1}}}$
and ${\mathtight\|x\|_{U}=\sqrt{\langle x,Ux\rangle}}$. The operator norm induced by $\|\cdot\|_{U}$ is ${\mathtight\|L\|_{U}=\sup
_{x\in\HH,\|x\|_{U}=1}\|Lx\|_{U}=\|U^{1/2}LU^{-1/2}\|}$. 


Let $A:\HH\to2^{\HH}$ be a set-valued operator. The domain of $A$ is denoted by $\dom (A)=\{x\in\HH|Ax\neq\textrm{Ø}\}$, its graph by $\gra (A)=\{(x,u)\in\HH\times\HH|u\in Ax\}$ and the set of zeros of $A$ is $\zer (A)=\{x\in\HH|0\in Ax\}$. The inverse of $A$ is defined through its graph: $ 
\gra (A^{-1})=\{(u,x)\in\HH\times\HH|(x,u)\in \gra (A)\}
$.
The \emph{resolvent} of $A$ is given by $J_{A}=(\id +A)^{-1}$.  Furthermore, $A$ is \emph{monotone} if $\langle x-y,u-v\rangle \geq 0$ for all 
$(x,u),(y,v)\in \gra (A)$,
and \emph{maximally monotone} if it is monotone and there exists no monotone operator $B:\HH\to2^{\HH}$ such that $\gra (A) \subset \gra (B)$ and $A\neq B$. 

The set of proper lower semicontinuous convex functions from $\HH$
to ${\mathtight ]-\infty,+\infty]}$ 
is denoted by $\Gamma_{0}(\HH)$. The \emph{Fenchel conjugate}
of  $f\in\Gamma_{0}(\HH)$, denoted $f^{*}\in\Gamma_{0}(\HH)$, is defined by $f^{*}:\HH\to]-\infty,+\infty]:u\mapsto{\sup}_{z\in\HH}\left(\langle u,z\rangle-f(z)\right)$.
The \emph{Fenchel-Young inequality}, $\langle x,u \rangle \leq f(x)+f^*(u)$ for all $x,u\in\HH$, 
holds for $f:\HH\to]-\infty,+\infty]$ proper. Throughout this paper we make extensive use of this inequality in the special case when $f=\frac{1}{2}\|\cdot\|^2_U$ for $U$ strongly positive. The \emph{infimal convolution} of $f,g:\HH\rightarrow]-\infty,+\infty]$  is denoted by 
${\mathtight f\oblong g:\HH\rightarrow]\infty,+\infty]:x\mapsto{\inf_{y\in \HH}}\left(f(y)+g(x-y)\right).
}$
If $f\in\Gamma_{0}(\HH)$ then the \emph{subdifferential} of $f$, denoted
by $\partial f$, is the maximally monotone operator $\partial f:\HH\rightarrow2^{\HH}:x\mapsto\{u\in\HH|(\forall y\in\HH)\,\langle y-x,u\rangle+f(x)\leq f(y)\}$, 
with inverse $(\partial f)^{-1}=\partial f^{*}$.
The resolvent  of $\partial f$ is called \emph{proximal operator} and is uniquely determined by 
$\prox_f(x) \coloneqq J_{\partial f}(x)= \argmin_{z\in \HH} f(z)+\frac{1}{2} \|x-z\|^2$.

Let $X$ be a nonempty closed convex set in $\HH$. The \emph{indicator function}  of $X$, denoted ${\mathtight \iota_X:\HH\to]-\infty,+\infty]}$, is defined by $\iota_X(x)=0$, if $x\in X$ and $\iota_X(x)=+\infty$, otherwise. 
The \emph{normal cone} of $X$ is the maximally monotone operator ${\mathtight N_X\coloneqq\partial \iota_X}$. 
The distance to $X$ with respect to $\|\cdot\|_U$ is denoted by $d_U(z,X)=\inf_{z^{\star}\in X}\|z-z^{\star}\|_U$, the projection of $z$ onto $X$ with respect to $\|\cdot\|_U$ is denoted by $\Pi^U_X(z)$, and the absence of superscript implies the same definitions with respect to the canonical norm. 

\section{{Asymmetric Forward-Backward-Adjoint Method}} \label{sec:3}
Let $\mathcal{H}$ be a real Hilbert space and consider the problem of finding $z\in \HH$ such that 
\begin{equation}
0\in Tz \quad \textrm{where} \quad T\coloneqq A+M+C, \label{eq:}
\end{equation}
where operators $A$, $M$, $C$ satisfy the following assumption:
\begin{ass} \label{assumption-1}
	Throughout the paper the following hold:
	\begin{enumerate}[{label=(\textit{\roman*})},ref=\textit{\roman{*}}]
		\item \label{enu:-is-maximally} Operator $A:\HH\to2^{\HH}$ is maximally monotone and $M\in\mathscr{B}(\HH)$ is monotone.
		\item \label{enu:Let--and} Operator $C:\HH\to\HH$ is $\beta$-cocoercive with respect to $\|\cdot\|_P$, where
		$\beta\in]1/4,+\infty[$ and $P\in\mathcal{S}_\rho(\HH)$ for some $\rho\in]0,\infty[$, \ie
		\[
		(\forall z\in\HH)(\forall z^{\prime}\in\HH)\quad\langle Cz-Cz^{\prime},z-z^{\prime}\rangle\geq \beta\|Cz-Cz^{\prime}\|_{P^{-1}}^{2}.
		\]
	\end{enumerate}
\end{ass}
It is important to notice that the freedom in choosing $P$ is a crucial part of our method. In \Cref{assumption-1}\eqref{enu:Let--and} we consider cocoercivity with respect to ${\|\cdot\|_P}$ with $\beta\in]1/4,+\infty[$. However, this is by no means a restriction of our setting; another approach would have been to consider cocoercivity with respect to the canonical norm $\|\cdot\|$ with $\beta\in]0,+\infty[$ but this would lead to statements involving $\|P\|$ and $\|P^{-1}\|$. 
Indeed convergence with respect to $\|\cdot\|$ and $\|\cdot\|_{P}$ are equivalent but in using $\|\cdot\|_{P}$ we simplify the notation substantially. 

In addition, let $S$ be a strongly positive, self-adjoint operator, $K\in\mathscr{B}(\HH)$ a
skew-adjoint operator, \emph{i.e.}, $K^{*}=-K$ and $H=P+K$. Then, the algorithm for solving the monotone inclusion described above is as follows:
\begin{algorithm}[H]
    \caption{Asymmetric Forward-Backward-Adjoint Splitting (AFBA)}
    \label{Algorithm-1}
    \begin{algorithmic} 
            \item\textbf{Inputs:} $z_0\in\HH$
            \For{$n=0,\ldots,$} 
                \State \hypertarget{alg:line1}{$\bar{z}_{n} =(H+A)^{-1}(H-M-C)z_{n}$}
                \State $\tilde{z}_{n} =\bar{z}_{n}-z_{n}$
                \State \hypertarget{alg:line3}{
                $
                	\displaystyle
                	\alpha_n
                	=
                	\frac%
                	{
                		\lambda_n
                		\|\tilde z_n\|_P^2
                	}
                	{
                		\|
                			\left(
                				H+M^*
                			\right)
                			\tilde z_n
                		\|_{S^{-1}}^2
                	}}
                $
                \State \hypertarget{alg:line4}{ $z_{n+1}=z_{n}+\alpha_{n}S^{-1}(H+M^*)\tilde{z}_{n}$ \label{eq:line4}}
            \EndFor
    \end{algorithmic}
\end{algorithm}
Before proceeding with the convergence analysis let us define 
\begin{equation}\label{eq:D}
D=(H+M^*)^*S^{-1}(H+M^*).
\end{equation}
Since $P\in\mathcal{S}_\rho(\HH)$ for some $\rho\in]0,\infty[$, $K$ is skew-adjoint, and $M\in\mathscr{B}(\HH)$ is monotone, it follows that $\langle (H+M^*)z,z\rangle\geq \rho\|z\|^2$ for all $z\in\HH$
, and we have 
\begin{equation} \label{eq:D pos}
(\forall z\in\HH)\quad\langle z,Dz\rangle=\|(H+M^*)z\|_{S^{-1}}^2 \geq \rho^2\|S\|^{-1} \|z\|^2.
\end{equation}
Hence, $D\in\mathcal{S}_\nu(\HH)$ with $\nu=\rho^2\|S\|^{-1}$. Notice that the denominator of $\alpha_n$ in \Cref{Algorithm-1} is equal to the left hand side of~\eqref{eq:D pos} for $z=\tilde{z}_n$ and thus it is bounded below by $\rho^2\|S\|^{-1} \|\tilde{z}_n\|^2$.

\subsection{{Convergence Analysis}} \label{sec:4}
In this section we analyze convergence and rate of convergence of \Cref{Algorithm-1}. We also consider a special case of the algorithm in which it is possible to relax strong positivity of $P$ to positivity. We begin by stating our main convergence result. 
The proof relies on showing that the sequence $(z_{n})_{n\in\Nn}$ is F\'{e}jer monotone with respect to $\zer (A+M+C)$ in the Hilbert space $\HH_{S}$.

\begin{thm} \label{thm:Suppose-that-}
\begin{singlespace}
 Consider \Cref{Algorithm-1} under \Cref{assumption-1} and assume $\zer (T)\neq\textrm{\O}$ where $T=A+M+C$. Let $\sigma\in]0,\infty[$, $S\in\mathcal{S}_\sigma(\HH)$,  $K\in\mathscr{B}(\HH)$ a
 skew-adjoint operator, and $H=P+K$. Let $(\lambda_{n})_{n\in\Nn}$ be a sequence such that
\begin{equation}
(\lambda_{n})_{n\in\Nn}\subseteq[0,\delta] \quad \textrm{with} \quad \delta=2-\frac{1}{2\beta},\quad\delta>0,\quad \underset{n\rightarrow\infty}{\lim\inf} \,\lambda_{n}(\delta-\lambda_{n})>0. \label{eq:-7}
\end{equation} 
Then the following hold:
\begin{enumerate}[{label=(\textit{\roman*})},ref=\textit{\roman{*}}]

\item \label{thm-part1} $(z_{n})_{n\in\Nn}$ is F\'{e}jer monotone with respect to $\zer (T)$ in the Hilbert space $\HH_{S}$.
\item \label{thm-part2}$(\tilde{z}_{n})_{n\in\Nn}$ converges strongly to zero.
\item \label{thm-part3} $(z_{n})_{n\in\Nn}$ converges weakly to a point in $\zer (T)$.
\end{enumerate}
Furthermore, when $C\equiv0$ all of the above statements hold with $\delta=2$.
	\end{singlespace}
\end{thm}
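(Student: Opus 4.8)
The plan is to show that $(z_n)_{n\in\Nn}$ is F\'ejer monotone with respect to $\zer(T)$ in the Hilbert space $\HH_S$ and then to conclude with the standard Opial-type argument; everything rests on a single one-step inequality. First I would rewrite the first line of \Cref{Algorithm-1}: it is equivalent to $(H-M-C)z_n - H\bar z_n\in A\bar z_n$, so adding $M\bar z_n + C\bar z_n$ to both sides and using $\tilde z_n = \bar z_n - z_n$ shows that $w_n\in T\bar z_n$, where $w_n := (M-H)\tilde z_n + (C\bar z_n - Cz_n)$. Fixing $z^\star\in\zer(T)$ (nonempty by hypothesis), one has $-Mz^\star - Cz^\star\in Az^\star$, and by monotonicity of $T$ also $\langle\bar z_n - z^\star, w_n\rangle\ge 0$.

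\emph{The key inequality.} The heart of the proof is the estimate
\[
\langle z_n - z^\star,\ (H+M^*)\tilde z_n\rangle\ \le\ -\tfrac{\delta}{2}\,\|\tilde z_n\|_P^2,\qquad \delta = 2-\tfrac{1}{2\beta}.
\]
I would obtain it as follows: apply monotonicity of $A$ to $\bigl(\bar z_n,\,-H\tilde z_n-Mz_n-Cz_n\bigr)$ against $\bigl(z^\star,\,-Mz^\star-Cz^\star\bigr)$; use monotonicity of $M$, in the form $\langle z_n-z^\star,M(z_n-z^\star)\rangle\ge0$, to replace $\langle\tilde z_n,M(z_n-z^\star)\rangle$ by $\langle\bar z_n-z^\star,M(z_n-z^\star)\rangle$; use $\langle\tilde z_n,H\tilde z_n\rangle = \|\tilde z_n\|_P^2$, which holds because $K$ is skew-adjoint; and, for the cocoercive contributions, split $Cz_n - Cz^\star$ and pair it once with $z_n - z^\star$ — where cocoercivity with respect to $\|\cdot\|_P$ gives $\langle z_n-z^\star,Cz_n-Cz^\star\rangle\ge\beta\|Cz_n-Cz^\star\|_{P^{-1}}^2$ — and once with $\tilde z_n$, where the Fenchel-Young inequality applied to $\tfrac12\|\cdot\|_P^2$ (with parameter $2\beta$) bounds $\langle\tilde z_n,Cz_n-Cz^\star\rangle$ by $\tfrac{1}{4\beta}\|\tilde z_n\|_P^2 + \beta\|Cz_n-Cz^\star\|_{P^{-1}}^2$. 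The two terms $\beta\|Cz_n-Cz^\star\|_{P^{-1}}^2$ cancel and what remains is $\bigl(\tfrac{1}{4\beta}-1\bigr)\|\tilde z_n\|_P^2 = -\tfrac{\delta}{2}\|\tilde z_n\|_P^2$; this is precisely where the standing assumption $\beta>1/4$ (equivalently $\delta>0$) is used. When $C\equiv0$ the last splitting is vacuous, the bound becomes $-\|\tilde z_n\|_P^2$, and $\delta=2$ is admissible. I expect this to be the main obstacle: keeping the asymmetric pieces straight ($M$ versus $M^*$, the vanishing of $\langle\cdot,K\,\cdot\rangle$) while tuning the Young parameter so that the cocoercivity constant collapses exactly to $\delta/2$. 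The remainder is routine.

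\emph{F\'ejer monotonicity and summability.} Expanding the squared $S$-norm along $z_{n+1} = z_n + \alpha_n S^{-1}(H+M^*)\tilde z_n$ gives
\[
\|z_{n+1}-z^\star\|_S^2 = \|z_n-z^\star\|_S^2 + 2\alpha_n\langle z_n-z^\star,(H+M^*)\tilde z_n\rangle + \alpha_n^2\langle\tilde z_n,D\tilde z_n\rangle,
\]
and the definition of $\alpha_n$ (whose denominator is exactly $\langle\tilde z_n,D\tilde z_n\rangle$) makes $\alpha_n^2\langle\tilde z_n,D\tilde z_n\rangle = \alpha_n\lambda_n\|\tilde z_n\|_P^2$, so the key inequality yields
\[
\|z_{n+1}-z^\star\|_S^2\ \le\ \|z_n-z^\star\|_S^2 - \alpha_n(\delta-\lambda_n)\|\tilde z_n\|_P^2 .
\]
Since $\lambda_n\in[0,\delta]$ the last term is nonnegative, which is \ref{thm-part1}. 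Boundedness of $D$ and strong positivity of $P$ give a constant $c>0$ with $\langle\tilde z_n,D\tilde z_n\rangle\le c\|\tilde z_n\|_P^2$, hence $\alpha_n\ge\lambda_n/c$; telescoping then gives $\sum_n\lambda_n(\delta-\lambda_n)\|\tilde z_n\|_P^2<\infty$, and together with $\liminf_n\lambda_n(\delta-\lambda_n)>0$ this forces $\|\tilde z_n\|_P\to0$, i.e. \ref{thm-part2} (the $P$-norm being equivalent to the canonical norm).

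\emph{Weak convergence.} F\'ejer monotonicity makes $(z_n)$ bounded. If $z_{n_k}\rightharpoonup z$ then $\bar z_{n_k}\rightharpoonup z$ as well, and $w_{n_k}\to0$ strongly, since $(M-H)\tilde z_{n_k}\to0$ and cocoercivity implies $\|C\bar z_{n_k}-Cz_{n_k}\|_{P^{-1}}\le\beta^{-1}\|\tilde z_{n_k}\|_P\to0$. Since $T=A+M+C$ is maximally monotone (the operators $M$, $C$ being monotone, continuous and everywhere defined), its graph is sequentially closed in $\HH^{\mathrm{weak}}\times\HH^{\mathrm{strong}}$, so $0\in Tz$, that is $z\in\zer(T)$. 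Opial's lemma in $\HH_S$ then gives \ref{thm-part3}. Finally, repeating the whole argument with every $C$-term deleted covers the case $C\equiv0$ with $\delta=2$, and the three claims carry over unchanged.
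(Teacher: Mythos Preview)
Your proof is correct and follows essentially the same route as the paper's: both derive the key inequality $\langle z_n-z^\star,(H+M^*)\tilde z_n\rangle\le -\tfrac{\delta}{2}\|\tilde z_n\|_P^2$ from monotonicity of $A$, monotonicity of $M$, skew-adjointness of $K$, and cocoercivity plus Fenchel--Young with parameter $\epsilon=1/(2\beta)$, then expand $\|z_{n+1}-z^\star\|_S^2$, telescope, and finish with the weak--strong closedness of $\gra T$. The only cosmetic difference is that the paper first introduces the rescaled operators $\tilde A=P^{-1}(A+K)$ and $\tilde B=P^{-1}(M+C-K)$ and works in $\HH_P$ (using monotonicity of $M-K$), whereas you work directly in $\HH$ with $A$, $M$, $C$; the two computations are line-by-line equivalent.
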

\begin{proof}
The operators $\tilde{A}=P^{-1}(A+K)$ and $\tilde{B}=P^{-1}(M+C-K)$
are monotone in the Hilbert space $\HH_{P}$. We observe that
\[
\bar{z}_{n}=\left(H+A\right)^{-1}(H-M-C)z_{n}=(\id+\tilde{A})^{-1}(\id-\tilde{B})z_{n}.
\]
Therefore $z_{n}-\tilde{B}z_{n}\in\bar{z}_{n}+\tilde{A}\bar{z}_{n}$,
or $-\tilde{z}_{n}-\tilde{B}z_{n}\in\tilde{A}\bar{z}_{n}$. Since $-\tilde{B}z^{\star}\in\tilde{A}z^{\star}$
for $z^{\star}\in\zer(T)$ by monotonicity of $\tilde{A}$ on $\HH_{P}$ we have 
\begin{align}
\langle\tilde{B}z_{n}-\tilde{B}z^{\star}+\tilde{z}_{n},z^{\star}-\bar{z}_{n}\rangle_{P} \geq0. \nonumber
\end{align}
Then, 
\begin{align}
0 & \leq\langle\tilde{B}z_{n}-\tilde{B}z^{\star}+\tilde{z}_{n},z^{\star}-\bar{z}_{n}\rangle_{P}\nonumber \\
 & =\langle P^{-1}(M+C-K)z_n-P^{-1}(M+C-K)z^\star	+\tilde{z}_{n},z^{\star}-\bar{z}_{n}\rangle_P.\nonumber \\
 &  =\langle(M-K)(z_{n}-z^{\star})+Cz_{n}-Cz^{\star}+P\tilde{z}_{n},z^{\star}-\bar{z}_{n}\rangle.\label{eq:-3-2}
\end{align}
On the other hand
\begin{align*}
\langle Cz_{n}-Cz^{\star},z^{\star}-\bar{z}_{n}\rangle & =\langle Cz_{n}-Cz^{\star},z_{n}-\bar{z}_{n}\rangle+\langle Cz_{n}-Cz^{\star},z^{\star}-z_{n}\rangle\\
 & \leq\frac{\epsilon}{2}\|\tilde{z}_{n}\|_{P}^{2}+\frac{1}{2\epsilon}\|Cz_{n}-Cz^{\star}\|_{P^{-1}}^{2}+\langle Cz_{n}-Cz^{\star},z^{\star}-z_{n}\rangle\\
 & \leq\frac{\epsilon}{2}\|\tilde{z}_{n}\|_{P}^{2}+\left(1-\frac{1}{2\epsilon\beta}\right)\langle Cz_{n}-Cz^{\star},z^{\star}-z_{n}\rangle.
\end{align*}
The first inequality follows from  Fenchel-Young inequality for $\frac{\epsilon}{2}\|\cdot\|_P^2$, while the second   from $\beta$-cocoercivity
of $C$ with respect to $\|\cdot\|_P$. Set $\epsilon\coloneqq\frac{1}{2\beta}$ so that
\begin{align}
\langle Cz_{n}-Cz^{\star},z^{\star}-\bar{z}_{n}\rangle & \leq\frac{1}{4\beta}\|\tilde{z}_{n}\|_{P}^{2}.\label{eq:-4-2}
\end{align}
In turn, \eqref{eq:-3-2}, \eqref{eq:-4-2} and monotonicity of $M-K$,
yield
\begin{align*}
0 & \leq\langle(M-K)(z_{n}-z^{\star})+Cz_{n}-Cz^{\star}+P\tilde{z}_{n},z^{\star}-\bar{z}_{n}\rangle\\
 & \leq \langle(M-K)(z_{n}-z^{\star}),z^{\star}-z_{n}\rangle+\langle(M-K)(z_{n}-z^{\star}),z_{n}-\bar{z}_{n}\rangle\\
 & +\frac{1}{4\beta}\|\tilde{z}_{n}\|_{P}^{2}+\langle P\tilde{z}_{n},z^{\star}-z_{n}\rangle+\langle P\tilde{z}_{n},z_{n}-\bar{z}_{n}\rangle\\
 & \leq\langle z_{n}-z^{\star},(M^{*}+K)(z_{n}-\bar{z}_{n})\rangle+\frac{1}{4\beta}\|\tilde{z}_{n}\|_{P}^{2}+\langle P\tilde{z}_{n},z^{\star}-z_{n}\rangle-\|\tilde{z}_{n}\|_{P}^{2}\\
 & =\langle z_{n}-z^{\star},-(M^{*}+H)\tilde{z}_{n}\rangle-\left(1-\frac{1}{4\beta}\right)\|\tilde{z}_{n}\|_{P}^{2},
\end{align*}
or equivalently 
\begin{equation}
\langle z_{n}-z^{\star},(M^{*}+H)\tilde{z}_{n}\rangle\leq-\left(1-\frac{1}{4\beta}\right)\|\tilde{z}_{n}\|_{P}^{2}. \label{eq:42-42-11}
\end{equation}
For notational convenience define $\delta\coloneqq2-\frac{1}{2\beta}$. We show that $\|z_{n}-z^{\star}\|^2_S$ is decreasing using \eqref{eq:42-42-11} together with \hyperlink{alg:line3}{step 3} and \hyperlink{alg:line3}{4} of \Cref{Algorithm-1}:
\begin{align}
\|z_{n+1}-z^{\star}\|_{S}^{2} & =\|z_{n}-z^{\star}+\alpha_{n}S^{-1}(H+M^{*})\tilde{z}_{n}\|_{S}^{2} \nonumber \\
 & =\|z_{n}-z^{\star}\|_{S}^{2}+2\alpha_{n}\langle z_{n}-z^{\star},(H+M^{*})\tilde{z}_{n}\rangle+\alpha_{n}^{2}\|(H+M^{*})\tilde{z}_{n}\|_{S^{-1}}^{2} \nonumber \\
 & \leq\|z_{n}-z^{\star}\|_{S}^{2}-\alpha_{n}\delta\|\tilde{z}_{n}\|_{P}^{2}+\alpha_{n}^{2}\|(H+M^{*})\tilde{z}_{n}\|_{S^{-1}}^{2}\nonumber \\
 & =\|z_{n}-z^{\star}\|_{S}^{2}-\delta\lambda_{n}\frac{\|\tilde{z}_{n}\|_{P}^{4}}{\|(H+M^{*})\tilde{z}_{n}\|_{S^{-1}}^{2}}+\lambda_{n}^{2}\frac{\|\tilde{z}_{n}\|_{P}^{4}}{\|(H+M^{*})\tilde{z}_{n}\|_{S^{-1}}^{2}}\nonumber \\
 & =\|z_{n}-z^{\star}\|_{S}^{2}-\lambda_{n}(\delta-\lambda_{n})\|\left(H+M^{*}\right)\tilde{z}_{n}\|_{S^{-1}}^{-2}\|\tilde{z}_{n}\|_{P}^{4} \label{eqnn:-14}\\
 & =\|z_{n}-z^{\star}\|_{S}^{2}-\lambda_{n}(\delta-\lambda_{n})\|P^{-1/2}S^{-1/2}\left(H+M^{*}\right)\tilde{z}_{n}\|_{P}^{-2}\|\tilde{z}_{n}\|_{P}^{4}\nonumber \\
 & \leq\|z_{n}-z^{\star}\|_{S}^{2}-\lambda_{n}(\delta-\lambda_{n})\|P^{-1/2}S^{-1/2}\left(H+M^{*}\right)\|_{P}^{-2}\|\tilde{z}_{n}\|_{P}^{2}\nonumber \\
 & =\|z_{n}-z^{\star}\|_{S}^{2}-\lambda_{n}(\delta-\lambda_{n})\|S^{-1/2}\left(H+M^{*}\right)P^{-1/2}\|^{-2}\|\tilde{z}_{n}\|_{P}^{2}. \label{eq:-12-23}
\end{align} 
Furthermore, when $C\equiv 0$ all the above analysis holds with $\delta=2$.
 
\eqref{thm-part1}: Inequality \eqref{eq:-12-23} and $(\lambda_{n})_{n\in\Nn}\subseteq[0,\delta]$ show that $(z_{n})_{n\in\Nn}$
is F\'{e}jer monotone with respect to $\zer (T)$ in the Hilbert space $\HH_{S}$. 

\eqref{thm-part2}: From \eqref{eq:-12-23}
and $\underset{n\rightarrow\infty}{\lim\inf} \,\lambda_{n}(\delta-\lambda_{n})>0$, it follows that $\tilde{z}_{n}\to 0$. 

\eqref{thm-part3}: Define 
\begin{equation}
w_{n}\coloneqq -(H-M)\tilde{z}_{n}+ C\bar{z}_n-C z_n.\label{eq:-2}
\end{equation}
It follows from \eqref{eq:-2}, linearity of $H-M$, cocoercivity of $C$ and \eqref{thm-part2} that
\begin{equation}
w_{n}\to 0.\label{eq:-4}
\end{equation}
By \hyperlink{alg:line1}{step 1} of \Cref{Algorithm-1} we have $(H-M-C)z_n\in (H+A)\bar{z}_n $, which together with \eqref{eq:-2} yields
\begin{equation}
w_{n}\in T\bar{z}_{n}.\label{eq:-3}
\end{equation}
Now let $z$ be a weak sequential cluster point of $(z_{n})_{n\in\Nn}$
, say $z_{k_{n}}\rightharpoonup z$. It follows from \eqref{thm-part2} that
$\bar{z}_{k_{n}}\rightharpoonup z$, and from \eqref{eq:-4}
that $w_{k_{n}}\to 0$. Altogether, by \eqref{eq:-3}, the members of the sequence $(\bar{z}_{k_{n}},w_{k_{n}})_{n\in\Nn}$
belong  to $\gra(T)$. Additionally, by \cite[Example 20.28, 20.29 and Corollary 24.4(i)]{bauschke2011convex}, $T$ is maximally monotone. Then, an appeal to \cite[Proposition
20.33(ii)]{bauschke2011convex} yields $(z,0)\in\gra(T)$. This together with~\eqref{thm-part1} and \cite[Theorem
5.5]{bauschke2011convex} completes the proof. 
\end{proof}

Equation~\eqref{eq:-12-23} implies that the sequence 
${(\min_{{\scriptstyle i=1\ldots n}}
	\| \tilde{z}_{i}\|_{P}^{2})_{n\in\Nn}}$,
the cumulative minimum of $(\|\tilde{z}_{n}\|_{P}^{2})_{n\in\Nn}$, 
converges sublinearly. Our next goal is to derive big-$O(1/(n+1))$ and little-$o(1/(n+1))$ convergence rates for the sequence itself. This is established below, under further restrictions on $\left(\lambda_n\right)_{n\in\Nn}$, by showing that the sequence $\left(\|\tilde{z}_n\|_D^2\right)_{n\in\Nn}$ is monotonically nonincreasing and summable.
\begin{thm}[Convergence rates] \label{Thm: conv-rates}
	Consider \Cref{Algorithm-1} under the assumptions of~\Cref{thm:Suppose-that-}. Let $c_1$ and $c_2$ be two positive constants satisfying 
		\begin{equation} \label{eqnn:-12}
		c_1 P \preceq D \preceq c_2 P,
		\end{equation}
		with $D$ defined in~\eqref{eq:D}, and assume 
		\begin{equation}\label{eqnn:-10}
		(\lambda_{n})_{n\in\Nn}\subseteq\left[0,c_1\delta/c_2\right],
		\end{equation} 
		 where $\delta$ is defined in~\eqref{eq:-7}. Then
		\begin{enumerate}[{label=(\textit{\roman*})},ref=\textit{\roman{*}}]
			\item \label{thm2-part1}$(\|\tilde{z}_n\|_D^2)_{n\in\Nn}$ is monotonically nonincreasing.
			\item \label{thm2-part2} If in addition to~\eqref{eq:-7} and~\eqref{eqnn:-10}, 
			$(\lambda_n(\delta-\lambda_n))_{n\in\Nn}\subseteq[\tau,\infty[$ for some $\tau>0$, then 
			\begin{equation*} 
			\|\tilde{z}_{n}\|_{D}^{2}\leq\frac{c_2^2}{\tau(n+1)}\|z_{0}-z^{\star}\|_{S}^{2}\quad\textrm{and}\quad \|\tilde{z}_{n}\|_{D}^{2}=o(1/(n+1)).
			\end{equation*}
		\end{enumerate}
	Furthermore, when $C\equiv 0$  all of the above statements hold with $\delta=2$.
\end{thm}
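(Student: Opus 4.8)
The plan is to build on inequality~\eqref{eq:-12-23} from the proof of~\Cref{thm:Suppose-that-}, rewriting the decrease in terms of the $D$-norm of $\tilde z_n$. Recall that the right-hand side of~\eqref{eqnn:-14} reads
$\|z_n-z^\star\|_S^2 - \lambda_n(\delta-\lambda_n)\|(H+M^*)\tilde z_n\|_{S^{-1}}^{-2}\|\tilde z_n\|_P^4$, and by~\eqref{eq:D} we have $\|(H+M^*)\tilde z_n\|_{S^{-1}}^2 = \langle \tilde z_n, D\tilde z_n\rangle = \|\tilde z_n\|_D^2$. Using the left inequality in~\eqref{eqnn:-12}, $\|\tilde z_n\|_P^2 \ge c_1^{-1}\|\tilde z_n\|_D^2$ is the wrong direction; instead I would use $\|\tilde z_n\|_P^4 \ge c_2^{-2}\|\tilde z_n\|_D^4$ (from $D \preceq c_2 P$, i.e. $\|\tilde z_n\|_D^2 \le c_2\|\tilde z_n\|_P^2$). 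Substituting into~\eqref{eqnn:-14} gives
\begin{equation*}
\|z_{n+1}-z^\star\|_S^2 \le \|z_n-z^\star\|_S^2 - \frac{\lambda_n(\delta-\lambda_n)}{c_2^2}\,\|\tilde z_n\|_D^2. \tag{$\ast$}
\end{equation*}
This is the master inequality from which both parts follow.

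For part~\eqref{thm2-part1}, I would show $(\|\tilde z_n\|_D^2)_{n\in\Nn}$ is nonincreasing by relating $\tilde z_{n+1}$ to $\tilde z_n$. The natural route is to exploit the nonexpansiveness-type structure already present: from~\hyperlink{alg:line1}{step 1}, $\bar z_n = (\id+\tilde A)^{-1}(\id-\tilde B)z_n$ where $\tilde A = P^{-1}(A+K)$ and $\tilde B = P^{-1}(M+C-K)$ are monotone on $\HH_P$, and $\tilde B$ is cocoercive on $\HH_P$ because $C$ is $\beta$-cocoercive w.r.t. $\|\cdot\|_P$ with $\beta>1/4$. One shows the map $z \mapsto \bar z - z$, evaluated along the iterates, is "firmly nonexpansive-like" in an appropriate metric; more concretely, I expect to derive that $\|\tilde z_{n+1}\|_D^2 \le \|\tilde z_n\|_D^2$ by writing out $\langle z_{n+1}-z_n, \cdot\rangle$ relations and re-running the monotonicity argument of~\Cref{thm:Suppose-that-} with $z^\star$ replaced by a shifted iterate, or — cleaner — by invoking that $z_{n+1}-z_n = \alpha_n S^{-1}(H+M^*)\tilde z_n$ and that the condition~\eqref{eqnn:-10} guarantees $\alpha_n \le $ some bound forcing contraction of the residual in $D$-norm. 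This is the step I expect to be the main obstacle: establishing monotone decay of the residual (rather than just of the distance to $z^\star$) requires applying the core inequality~\eqref{eq:42-42-11} to the pair $(\bar z_n, \bar z_{n+1})$ or $(z_n, z_{n+1})$ in place of $(z_n, z^\star)$, checking that the cocoercivity/monotonicity bookkeeping still closes with the factor $c_1\delta/c_2$ in place of $\delta$, and that is where the precise range~\eqref{eqnn:-10} gets used.

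For part~\eqref{thm2-part2}, the argument is then standard telescoping. Summing~($\ast$) over $n=0,\ldots,N$ and using $\lambda_n(\delta-\lambda_n)\ge\tau$ gives $\sum_{n=0}^{\infty}\|\tilde z_n\|_D^2 \le \frac{c_2^2}{\tau}\|z_0-z^\star\|_S^2 < \infty$, so $(\|\tilde z_n\|_D^2)_{n\in\Nn}$ is summable. Combined with part~\eqref{thm2-part1} (monotonicity), the elementary fact that a nonincreasing summable sequence $(a_n)$ satisfies $(n+1)a_n \le \sum_{i=0}^n a_i$ yields $\|\tilde z_n\|_D^2 \le \frac{1}{n+1}\sum_{i=0}^{n}\|\tilde z_i\|_D^2 \le \frac{c_2^2}{\tau(n+1)}\|z_0-z^\star\|_S^2$, which is the big-$O$ bound. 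The little-$o$ statement follows because the tail $\sum_{i\ge n}\|\tilde z_i\|_D^2 \to 0$, and again by monotonicity $(n+1)\|\tilde z_{2n}\|_D^2 \le \sum_{i=n}^{2n}\|\tilde z_i\|_D^2 \to 0$, giving $\|\tilde z_n\|_D^2 = o(1/(n+1))$ after the usual index manipulation. Finally, the case $C\equiv 0$ is handled exactly as in~\Cref{thm:Suppose-that-}: cocoercivity is vacuous, $\epsilon$ need not be fixed, and the whole derivation goes through with $\delta=2$, since~($\ast$) and the telescoping only used $\delta>0$ and the range conditions on $(\lambda_n)$.
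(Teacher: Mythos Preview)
Your overall approach matches the paper's, and part~\eqref{thm2-part2} is essentially identical: the paper derives your inequality~($\ast$) exactly as you do (from~\eqref{eqnn:-14} together with $D\preceq c_2P$), sums it, uses $\lambda_n(\delta-\lambda_n)\ge\tau$ to get summability of $(\|\tilde z_n\|_D^2)$, and then combines summability with the monotonicity from part~\eqref{thm2-part1} to obtain the big-$O$ and little-$o$ rates just as you describe.

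For part~\eqref{thm2-part1}, your instinct is correct --- the paper does rerun the monotonicity argument with $(z_n,z_{n+1})$ in place of $(z_n,z^\star)$ --- but you stop at ``I expect to derive'' without closing the bookkeeping, so let me indicate how the paper finishes it. From \hyperlink{alg:line1}{step~1} applied at $n$ and $n+1$, monotonicity of $A$ gives
\[
0\le \langle (H-M)(z_n-z_{n+1})-H(\tilde z_n-\tilde z_{n+1})+Cz_{n+1}-Cz_n,\ \bar z_n-\bar z_{n+1}\rangle.
\]
The cocoercive term is bounded exactly as in~\eqref{eq:-4-2}, yielding $\langle Cz_{n+1}-Cz_n,\bar z_n-\bar z_{n+1}\rangle\le\tfrac{1}{4\beta}\|\tilde z_n-\tilde z_{n+1}\|_P^2$. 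Expanding $\bar z=\tilde z+z$ and using monotonicity of $M$ one arrives at
\[
\bigl(1-\tfrac{1}{4\beta}\bigr)\|\tilde z_n-\tilde z_{n+1}\|_P^2 \le \langle \alpha_n D\tilde z_n,\ \tilde z_n-\tilde z_{n+1}\rangle,
\]
after substituting $z_n-z_{n+1}=-\alpha_n S^{-1}(H+M^*)\tilde z_n$. Now the polarization identity $\|a\|_D^2-\|b\|_D^2=2\langle Da,a-b\rangle-\|a-b\|_D^2$ gives
\[
\|\tilde z_n\|_D^2-\|\tilde z_{n+1}\|_D^2 \ge \tfrac{\delta}{\alpha_n}\|\tilde z_n-\tilde z_{n+1}\|_P^2-\|\tilde z_n-\tilde z_{n+1}\|_D^2 \ge \Bigl(\tfrac{c_1\delta}{c_2\lambda_n}-1\Bigr)\|\tilde z_n-\tilde z_{n+1}\|_D^2,
\]
where the last step uses $\alpha_n=\lambda_n\|\tilde z_n\|_P^2/\|\tilde z_n\|_D^2\le\lambda_n/c_1$ (from $c_1P\preceq D$) and $\|\cdot\|_D^2\le c_2\|\cdot\|_P^2$. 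This is precisely where~\eqref{eqnn:-10} enters, making the right-hand side nonnegative. So your plan was right; what was missing was the polarization identity and the observation that both inequalities in~\eqref{eqnn:-12} are needed --- $c_1P\preceq D$ to bound $\alpha_n$, and $D\preceq c_2P$ to pass from the $P$-norm to the $D$-norm on the difference $\tilde z_n-\tilde z_{n+1}$.
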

\begin{proof}
 \eqref{thm2-part1}: Using the monotonicity of $A$ and \hyperlink{alg:line1}{Step 1} of~\Cref{Algorithm-1} 
 \begin{align} \label{eqnn:-1}
 0 & \leq  \langle(H-M)(z_{n}-z_{n+1})-H(\bar{z}_{n}-\bar{z}_{n+1})+Cz_{n+1}-Cz_n,\bar{z}_{n}-\bar{z}_{n+1}\rangle.
 \end{align}
 On the other hand we have 
 \small
 \begin{align*}
 \langle Cz_{n+1}-Cz_{n},\bar{z}_{n}-\bar{z}_{n+1}\rangle =&\langle Cz_{n+1}-Cz_{n},\tilde{z}_{n}-\tilde{z}_{n+1}\rangle+ \langle Cz_{n+1}-Cz_{n},{z}_{n}-{z}_{n+1}\rangle \\
 \leq&\frac{\epsilon}{2}\|\tilde{z}_{n}-\tilde{z}_{n+1}\|_{P}^{2}+\frac{1}{2\epsilon}\|Cz_{n+1}-Cz_{n}\|_{P^{-1}}^{2}\\
 +&\langle Cz_{n+1}-Cz_{n},{z}_{n}-{z}_{n+1}\rangle\\
 \leq&\frac{\epsilon}{2}\|\tilde{z}_{n}-\tilde{z}_{n+1}\|_{P}^{2}+\left(1-\frac{1}{2\epsilon\beta}\right)\langle Cz_{n+1}-Cz_{n},{z}_{n}-{z}_{n+1}\rangle.
 \end{align*}\normalsize
 The first inequality follows from the Fenchel-Young inequality for $\frac{\epsilon}{2}\|\cdot\|_P^2$, and the second inequality follows from $\beta$-cocoercivity
 of $C$ with respect to $\|\cdot\|_P$. Set $\epsilon=\frac{1}{2\beta}$ so that
 \begin{equation}
 \langle Cz_{n+1}-Cz_{n},\bar{z}_{n}-\bar{z}_{n+1}\rangle  \leq\frac{1}{4\beta}\|\tilde{z}_{n}-\tilde{z}_{n+1}\|_{P}^{2}.\label{eqn:-4-2}
 \end{equation}
 Using~\eqref{eqnn:-1},~\eqref{eqn:-4-2} and monotonicity of $M$ we have
 \begin{align}
 0 
 & \leq \tfrac{1}{4\beta}\|\tilde{z}_{n}-\tilde{z}_{n+1}\|_{P}^{2}+\langle-M(z_{n}-z_{n+1})-H(\tilde{z}_{n}-\tilde{z}_{n+1}),\bar{z}_{n}-\bar{z}_{n+1}\rangle \nonumber\\
 & = \tfrac{1}{4\beta}\|\tilde{z}_{n}-\tilde{z}_{n+1}\|_{P}^{2} + \langle-M(z_{n}-z_{n+1})-H(\tilde{z}_{n}-\tilde{z}_{n+1}),\tilde{z}_{n}-\tilde{z}_{n+1}\rangle \nonumber\\
 & +\langle-M(z_{n}-z_{n+1})-H(\tilde{z}_{n}-\tilde{z}_{n+1}),z_{n}-z_{n+1}\rangle\nonumber\\
 & \leq \tfrac{1}{4\beta}\|\tilde{z}_{n}-\tilde{z}_{n+1}\|_{P}^{2} + \langle-M(z_{n}-z_{n+1})-H(\tilde{z}_{n}-\tilde{z}_{n+1}),\tilde{z}_{n}-\tilde{z}_{n+1}\rangle \nonumber\\
 &+ \langle-H(\tilde{z}_{n}-\tilde{z}_{n+1}),z_{n}-z_{n+1}\rangle \label{eqnn:-20}\\
 & =  -\left(1-\tfrac{1}{4\beta}\right)\|\tilde{z}_{n}-\tilde{z}_{n+1}\|_{P}^{2} -\langle(M+H^*)(z_{n}-z_{n+1}),\tilde{z}_{n}-\tilde{z}_{n+1}\rangle. \label{eqnn:-2}
 \end{align}
 It follows from~\eqref{eqnn:-2} and \hyperlink{alg:line4}{Step 4} of~\Cref{Algorithm-1} that  
 \begin{align}
 \left(1-\tfrac{1}{4\beta}\right)\|\tilde{z}_{n}-\tilde{z}_{n+1}\|_{P}^{2} & \leq \langle-(M+H^*)(z_{n}-z_{n+1}),\tilde{z}_{n}-\tilde{z}_{n+1}\rangle\nonumber \\
 & = \langle \alpha_n(H+M^*)^*S^{-1}(H+M^*)\tilde{z}_{n},\tilde{z}_{n}-\tilde{z}_{n+1}\rangle\nonumber\\
 &=\langle \alpha_nD\tilde{z}_{n},\tilde{z}_{n}-\tilde{z}_{n+1}\rangle. \label{eqnn:-42}
 \end{align}
 Let us show that $(\|\tilde{z}_n\|_D^2)_{n\in\Nn}$ is monotonically nonincreasing. Using the identity 
 \begin{equation} \label{eq:identity1}
 \|a\|_{D}^{2}-\|b\|_{D}^{2}=2\langle Da,a-b\rangle-\|a-b\|_{D}^{2},
 \end{equation}
 we have  	 
 \begin{align*}
 \|\tilde{z}_{n}\|_{D}^{2}-\|\tilde{z}_{n+1}\|_{D}^{2}&=2\langle D\tilde{z}_{n},\tilde{z}_{n}-\tilde{z}_{n+1}\rangle-\|\tilde{z}_{n}-\tilde{z}_{n+1}\|_{D}^{2} \nonumber\\
 & \geq 
 \tfrac{2}{\alpha_n}(1-\tfrac{1}{4\beta})\|\tilde{z}_{n}-\tilde{z}_{n+1}\|_{P}^{2}-\|\tilde{z}_{n}-\tilde{z}_{n+1}\|_{D}^{2}\nonumber\\
 & \geq \tfrac{c_1\delta}{\lambda_n}\|\tilde{z}_{n}-\tilde{z}_{n+1}\|_{P}^{2}-\|\tilde{z}_{n}-\tilde{z}_{n+1}\|_{D}^{2} \nonumber\\
 & \geq \left(\tfrac{c_1\delta}{c_2\lambda_n}-1\right)\|\tilde{z}_{n}-\tilde{z}_{n+1}\|_{D}^{2} 
 \end{align*}
 where the inequalities follow from~\eqref{eqnn:-42}, the definition of $\alpha_n$  and~\eqref{eqnn:-12}. The assertion follows from~\eqref{eqnn:-10} and the above inequality. 
 
 \eqref{thm2-part2}:  It follows from~\eqref{eqnn:-14} and~\eqref{eqnn:-12} that
 \begin{align}
 \|z_{n+1}-z^{\star}\|_{S}^{2} & \leq \|z_{n}-z^{\star}\|_{S}^{2}-\lambda_{n}(\delta-\lambda_{n})\|\left(H+M^{*}\right)\tilde{z}_{n}\|_{S^{-1}}^{-2}\|\tilde{z}_{n}\|_{P}^{4}\nonumber \\
 & = \|z_{n}-z^{\star}\|_{S}^{2}-\lambda_{n}(\delta-\lambda_{n})\|\tilde{z}_{n}\|_D^{-2}\|\tilde{z}_{n}\|_{P}^{4}\nonumber \\
 & \leq \|z_{n}-z^{\star}\|_{S}^{2}-c_2^{-2}\lambda_{n}(\delta-\lambda_{n})\|\tilde{z}_n\|_D^2.\nonumber
 \end{align}
 Summing over $n$ yields $\textstyle{\sum_{i=0}^{\infty}} \lambda_i(\delta-\lambda_i)\|\tilde{z}_i\|_D^2 \leq c_2^{2}\|z_{0}-z^{\star}\|_{S}^{2}$. 
 It then follows from $(\lambda_n(\delta-\lambda_n))_{n\in\Nn}\subseteq[\tau,\infty[$ that
 \begin{equation} \label{eqnn:-8}
 \textstyle{\sum_{i=0}^{\infty}} \|\tilde{z}_i\|_D^2 \leq \frac{c_2^{2}}{\tau}\|z_{0}-z^{\star}\|_{S}^{2}. 
 \end{equation} 
 On the other hand \eqref{thm2-part1} yields 
 \begin{equation} \label{eqnn:-7}
 \|\tilde{z}_{n}\|_{D}^{2}\leq\frac{1}{n+1}\sum_{i=0}^{n}\|\tilde{z}_{i}\|_{D}^{2}.
 \end{equation}
 Combining~\eqref{eqnn:-8} and \eqref{eqnn:-7} establishes the big-$O$ convergence.
 The little-$o$ convergence follows from \eqref{thm2-part1},~\eqref{eqnn:-8} and \cite[Lemma 3-(1a)]{davis2014convergence}.
	\end{proof}
We can show stronger convergence results, \ie, linear convergence rate, under \emph{metric subregulariy} assumption for $T$. We restate the following definition from \cite{dontchev2004regularity}:

\begin{deff}[Metric subregularity] \label{metricsub}
A mapping $F$ is metrically subregular at $\bar{x}$ for $\bar{y}$ if $(\bar{x},\bar{y})\in\gra F$ and there exists $\eta\in[0,\infty[$, 
a neighborhood $\mathcal{U}$ of $\bar{x}$ and $\mathcal{V}$ of $\bar{y}$ such that 
\begin{equation}\label{eqnn:metricsubregularity}
d(x,F^{-1}\bar{y})\leq \eta d(\bar{y},Fx\cap \mathcal{V})  \;\; \textrm{for all} \; x\in \mathcal{U}. 
\end{equation}
\end{deff}
This is equivalent to \emph{calmness} of the operator $F^{-1}$ at $\bar{y}$ for $\bar{x}$ \cite[Theorem 3.2]{dontchev2004regularity}. The above two properties are weaker versions of metric regularity and Aubin property, respectively. We refer the reader to \cite[Chapter 9]{rockafellar2009variational} and \cite[Chapter 3]{dontchev2009implicit} for an extensive discussion. In \Cref{Thm: conv-rates-ii}, we derive linear convergence rates when the operator $T=A+M+C$ is {metrically subregular} at all $z^\star\in\zer (T)$ for $0$. Metric subregularity is used in \cite{liang2014convergence} to show linear convergence of Krasnosel'ski\v{\i}-Mann iterations for finding a fixed point of a nonexpansive mapping.
\begin{thm}[Linear convergence] \label{Thm: conv-rates-ii}
Consider \Cref{Algorithm-1} under the assumptions of~\Cref{thm:Suppose-that-}.  Suppose that $T$ is {metrically subregular} at all $z^\star\in\zer (T)$ for $0$,  \emph{cf.}~\eqref{eqnn:metricsubregularity}.
  If either $\HH$ is finite-dimensional or $\mathcal{U}=\HH$, 
  then $(d_S({z}_{n},\zer(T)))_{n\in\Nn}$ converges $Q$-linearly to zero,  $(z_n)_{n\in\Nn}$ and $(\|\tilde{z}_{n}\|_P)_{n\in\Nn}$ converge $R$-linearly to some $z^\star\in\zer(T)$ and zero, respectively.\footnote{The sequence $(x_{n})_{n\in\Nn}$ converges to $x^{\star}$ $R$-linearly if there is a sequence of nonnegative scalars $({v_n})_{n\in\Nn}$ such that  $\|x_{n}-x^{\star}\|\leq v_n$ and $(v_n)_{n\in\Nn}$ converges $Q$-linearly\footnotemark{} to zero.} \footnotetext{The sequence $(x_{n})_{n\in\Nn}$ converges to $x^{\star}$ $Q$-linearly with $Q$-factor given by $\sigma\in]0,1[$, if for $n$ sufficiently large  $\|x_{n+1}-x^{\star}\|\leq\sigma\|x_{n}-x^{\star}\|$ holds.}
Furthermore, when $C\equiv 0$  the above statements hold with $\delta=2$.
\end{thm}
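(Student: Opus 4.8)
The plan is to bootstrap from the convergence rates already established in \Cref{Thm: conv-rates} together with the Fejér-type estimate \eqref{eq:-12-23}, upgrading the sublinear decay of $\|\tilde z_n\|_D^2$ to linear decay by exploiting the error bound that metric subregularity provides. The starting point is the inclusion $w_n\in T\bar z_n$ from \eqref{eq:-3}, where $w_n = -(H-M)\tilde z_n + C\bar z_n - Cz_n$ is controlled by $\|\tilde z_n\|_P$ (linearity of $H-M$ and cocoercivity of $C$, as in \eqref{eq:-4}). Since $T$ is metrically subregular at each $z^\star\in\zer(T)$ for $0$, and since (by \Cref{thm:Suppose-that-}\eqref{thm-part3} and \eqref{thm-part2}) both $z_n$ and $\bar z_n$ eventually lie in any prescribed neighborhood of the limit point $z^\star\in\zer(T)$ — here the hypothesis that $\HH$ is finite-dimensional or $\mathcal U=\HH$ is what guarantees $\bar z_n$ lands in the region where the error bound applies — we get for $n$ large
\[
d(\bar z_n,\zer(T)) \;\le\; \eta\,\|w_n\| \;\le\; \eta'\,\|\tilde z_n\|_P,
\]
after absorbing norm-equivalence constants and the operator norm of $H-M$ plus the cocoercivity modulus of $C$ into $\eta'$. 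Then, using $\bar z_n = z_n + \tilde z_n$ and the triangle inequality in $\HH_S$,
\[
d_S(z_n,\zer(T)) \;\le\; d_S(\bar z_n,\zer(T)) + \|\tilde z_n\|_S \;\le\; c\,\|\tilde z_n\|_P
\]
for a suitable constant $c$, again by norm equivalence on the finite-dimensional space (or, in infinite dimensions with $\mathcal U=\HH$, by the strong positivity of $P$, $S$ and the boundedness of the operators involved).

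Next I would feed this error bound back into \eqref{eq:-12-23}. Picking $z^\star$ to be the projection $\Pi_{\zer(T)}^S(z_n)$ (the nearest point in the $S$-metric), that inequality reads
\[
d_S(z_{n+1},\zer(T))^2 \;\le\; \|z_{n+1}-z^\star\|_S^2 \;\le\; d_S(z_n,\zer(T))^2 - \lambda_n(\delta-\lambda_n)\,\kappa\,\|\tilde z_n\|_P^2
\]
for a positive constant $\kappa = \|S^{-1/2}(H+M^*)P^{-1/2}\|^{-2}$. Combining with $d_S(z_n,\zer(T))^2 \le c^2\|\tilde z_n\|_P^2$ and the uniform lower bound $\lambda_n(\delta-\lambda_n)\ge\tau>0$ from \eqref{eqnn:metricsubregularity}'s companion assumption in \Cref{thm:Suppose-that-} (the $\liminf$ condition in \eqref{eq:-7}, possibly after passing to $[\tau,\infty[$ as in \Cref{Thm: conv-rates}\eqref{thm2-part2}), I obtain
\[
d_S(z_{n+1},\zer(T))^2 \;\le\; \bigl(1 - \tfrac{\tau\kappa}{c^2}\bigr)\, d_S(z_n,\zer(T))^2
\]
for $n$ large, i.e.\ $Q$-linear convergence of $d_S(z_n,\zer(T))$ to zero (the contraction factor lies in $]0,1[$ after possibly shrinking, since $\lambda_n(\delta-\lambda_n)\le\delta^2/4$ keeps the subtracted term from overshooting). $R$-linear convergence of $\|\tilde z_n\|_P$ then follows because $\|\tilde z_n\|_P^2 \le \tfrac{1}{\tau\kappa}\bigl(d_S(z_n,\zer(T))^2 - d_S(z_{n+1},\zer(T))^2\bigr) \le \tfrac{1}{\tau\kappa}d_S(z_n,\zer(T))^2$, and the right side decays $Q$-linearly. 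Finally, $R$-linear convergence of $(z_n)_{n\in\Nn}$ to a single point is the standard Fejér-monotone argument: from \eqref{eq:-12-23} the increments satisfy $\|z_{n+1}-z_n\|_S \le \alpha_n\|(H+M^*)\tilde z_n\|_{S^{-1}}$, which is bounded by a constant times $\|\tilde z_n\|_P$ (unwinding the definition of $\alpha_n$: $\alpha_n\|(H+M^*)\tilde z_n\|_{S^{-1}} = \lambda_n\|\tilde z_n\|_P^2/\|(H+M^*)\tilde z_n\|_{S^{-1}}$, which is $\le \lambda_n\|\tilde z_n\|_P^2/(\sqrt\kappa\,\|\tilde z_n\|_P)$), so the increments are $R$-linearly summable; a Cauchy argument gives convergence to some $z^\star$, necessarily in $\zer(T)$ by weak convergence, with $\|z_n-z^\star\|_S$ dominated by the geometric tail.

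The main obstacle is the passage from the local error bound to a globally effective one along the trajectory: one must argue that for all $n$ beyond some index $N$, both $z_n$ and $\bar z_n$ lie in the neighborhood $\mathcal U$ (and $w_n$ in $\mathcal V$) associated with the particular $z^\star$ to which the sequence converges, uniformly, so that the constant $\eta$ can be treated as fixed. This is exactly where the dichotomy in the hypothesis enters: if $\mathcal U=\HH$ there is nothing to check, whereas in finite dimensions one uses $z_n\to z^\star$, $\tilde z_n\to 0$ (hence $\bar z_n\to z^\star$) and $w_n\to 0$ from \Cref{thm:Suppose-that-}, together with the fact that metric subregularity holds \emph{at every} point of $\zer(T)$, to conclude that the error bound is eventually applicable with a uniform modulus in a neighborhood of the limit. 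A secondary subtlety is the norm-equivalence bookkeeping between $\|\cdot\|$, $\|\cdot\|_P$ and $\|\cdot\|_S$, and tracking that all constants ($\eta'$, $c$, $\kappa$, $\tau$) are strictly positive and finite so that the contraction factor genuinely lies in $]0,1[$; this is routine given $P\in\mathcal S_\rho(\HH)$, $S\in\mathcal S_\sigma(\HH)$ and boundedness of $H$, $M$, $C$. The case $C\equiv 0$ is identical with $\delta=2$ throughout, since the only place $\beta$ entered was through $\delta$.
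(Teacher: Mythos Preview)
Your proposal is correct and follows essentially the same route as the paper: use $w_n\in T\bar z_n$ together with metric subregularity to get $d(\bar z_n,\zer T)\le\eta'\|\tilde z_n\|_P$, pass to $d_S(z_n,\zer T)\le c\|\tilde z_n\|_P$ by the triangle inequality, feed this into the Fej\'er estimate \eqref{eq:-12-23} (with $z^\star=\Pi^S_{\zer T}(z_n)$) to obtain $Q$-linear decay of $d_S(z_n,\zer T)$, and then read off $R$-linear convergence of $\|\tilde z_n\|_P$ and of $(z_n)$ via a Cauchy argument on the increments. One small bookkeeping slip: in bounding $\|z_{n+1}-z_n\|_S$ you need a \emph{lower} bound $\|(H+M^*)\tilde z_n\|_{S^{-1}}\ge c'\|\tilde z_n\|_P$, which does hold (e.g.\ by \eqref{eq:D pos}) but not with the constant $\sqrt\kappa=\|S^{-1/2}(H+M^*)P^{-1/2}\|^{-1}$ you wrote; the paper sidesteps this by bounding $\|z_{n+1}-z_n\|_S^2$ directly by the telescoping difference $d_S^2(z_n,\zer T)-d_S^2(z_{n+1},\zer T)$ via \eqref{eqnn:-14}.
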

\begin{proof}
It follows from metric subregularity of $T$ at all $z^\star\in\zer (T)$ for $0$ that 
\begin{equation} \label{eqnn:calmness-equiv}
d(x,\zer (T))\leq\eta\|y\|\;\; \forall x\in \mathcal{U} \,\;\textrm{and} \,\; y\in Tx \,\;\textrm{with}\;\|y\|\leq\nu,
\end{equation}
for some $\nu\in]0,\infty[$ and $\eta\in[0,\infty[$ and a neighborhood $\mathcal{U}$ of $\zer (T)$. Consider $w_{n}$ defined in \eqref{eq:-2}. It was shown in~\eqref{eq:-4} that $w_{n}\to 0$ and if $\HH$ is a finite-dimensional Hilbert space, \Cref{thm:Suppose-that-}\eqref{thm-part2}-\eqref{thm-part3} yield that $\bar{z}_n$ converges to a point in $\zer (T)$. Then there exists
$\bar{n}\in\Nn$ such that for $n>\bar{n}$ we have $\|w_{n}\|\leq\nu$ and a neighborhood $\mathcal{U}$ of $\zer (T)$ exists with $\bar{z}_n\in \mathcal{U}$ (This holds trivially when $\mathcal{U}=\HH$). Consequently~\eqref{eqnn:calmness-equiv} yields
$d(\bar{z}_{n},\zer (T))\leq\eta\|w_{n}\|$. In addition, triangle inequality and Lipschitz
continuity of $C$ yield 
\begin{align}
	\|w_{n}\| & =\|H\tilde{z}_{n}-M\tilde{z}_{n}-C\bar{z}_{n}+C{z}_{n}\|\leq \|(H-M)\tilde{z}_{n}\|+\|C\bar{z}_{n}-Cz_{n}\|\nonumber \\ 
	& \leq\left(\|H-M\|+\tfrac{1}{\beta}\|P\|\right)\|\tilde{z}_{n}\|.\nonumber
\end{align}
Consider the projection of $\bar{z}_n$ onto $\zer(T)$, $\Pi_{\zer(T)}(\bar{z}_n)$. By definition  $\|\bar{z}_{n}-\Pi_{\zer(T)}(\bar{z}_n)\|=d(\bar{z}_{n},\zer (T))$ (the minimum is attained since $T$ is maximally monotone \cite[Proposition 23.39]{bauschke2011convex}), and we have
\begin{align}
	\|z_{n}-\Pi_{\zer(T)}(\bar{z}_n)\| & \leq  \|\bar{z}_{n}-\Pi_{\zer(T)}(\bar{z}_n)\|+\|\tilde{z}_{n}\|  = d(\bar{z}_{n},\zer (T))+\|\tilde{z}_{n}\| \nonumber \\
	& \leq \xi\eta\|\tilde{z}_{n}\| +\|\tilde{z}_{n}\|  \leq  (\xi\eta+1)\|P^{-1}\|^{1/2}\|\tilde{z}_{n}\|_{P}, \label{eq:-40}
\end{align}
where $\xi=\|H-M\|+\frac{1}{\beta}\|P\|$. 
It follows from~\eqref{eq:-40} that 
\begin{align} \label{eq:-110}
	d_S^2(z_n,\zer(T))  \leq \|z_n-\Pi_{\zer(T)}(\bar{z}_n)\|_S^2 \leq (\xi\eta+1)^2\|P^{-1}\|\|S\|\|\tilde{z}_{n}\|^2_{P}.
\end{align}
For $\Pi^S_{\zer(T)}(z_n)$ by its definition we have $\|{z}_{n}-\Pi^S_{\zer(T)}({z}_n)\|_S=d_S({z}_{n},\zer(T))$, and since inequality~\eqref{eqnn:-14} holds for all $z^\star\in\zer(T)$, it follows that 
\small
\mathtight{ \begin{align}
		d^2_S({z}_{n+1},\zer(T)) & \leq \|z_{n+1}-\Pi^S_{\zer(T)}(z_n)\|^2_S \nonumber \\
		& \leq \|z_{n}-\Pi^S_{\zer(T)}(z_n)\|_{S}^{2}-\lambda_{n}(\delta-\lambda_{n})\|\left(H+M^{*}\right)\tilde{z}_{n}\|_{S^{-1}}^{-2}\|\tilde{z}_{n}\|_{P}^{4} \nonumber \\
		& = d^2_S({z}_{n},\zer(T)) -\lambda_{n}(\delta-\lambda_{n})\|\left(H+M^{*}\right)\tilde{z}_{n}\|_{S^{-1}}^{-2}\|\tilde{z}_{n}\|_{P}^{4} \label{eqnn:zn}\\
		& \leq d^2_S({z}_{n},\zer(T))-\lambda_{n}(\delta-\lambda_{n})\|S^{-1/2}\left(H+M^{*}\right)P^{-1/2}\|^{-2}\|\tilde{z}_{n}\|_{P}^{2} \label{eqnn:tildez}\\
		&\leq  d^2_S({z}_{n},\zer(T))-\tfrac{\lambda_{n}(\delta-\lambda_{n})}{(\xi\eta+1)^2\|P^{-1}\|\|S\|\|S^{-1/2}\left(H+M^{*}\right)P^{-1/2}\|^{2}}d^2_S({z}_{n},\zer(T)), \nonumber
	\end{align}}\normalsize
	where in the last inequality we used~\eqref{eq:-110}. It follows from~\eqref{eq:-7} that there exists $\tilde{n}\in\Nn$ such that $\left(\lambda_{n}(\delta-\lambda_{n})\right)_{n>\tilde{n}}\subseteq[\bar{\tau},\infty[$ for some  $\bar{\tau}>0$. Therefore,  $\left(d_S({z}_{n},\zer(T))\right)_{n\in\Nn}$ converges $Q$-linearly to zero. $R$-linear convergence of  $\left(\|\tilde{z}_{n}\|_{P}\right)_{n\in\Nn}$ follows from~\eqref{eqnn:tildez} and $Q$-linear convergence of $\left(d_S({z}_{n},\zer(T))\right)_{n\in\Nn}$. \hyperlink{alg:line4}{Step 4} of \Cref{Algorithm-1} and~\eqref{eqnn:zn} yield 
	\small
	\begin{align*}
		\|z_{n+1}-z_n\|_S^2 =  \lambda_{n}^{2}\|(H+M^{*})\tilde{z}_{n}\|_{S^{-1}}^{-2}\|\tilde{z}_{n}\|_{P}^{4}
		\leq  \frac{\delta^2}{\bar{\tau}}\left(d^2_S({z}_{n},\zer(T))-d^2_S({z}_{n+1},\zer(T))\right). 
	\end{align*}\normalsize
	Therefore,  $\left(\|z_{n+1}-z_n\|_S\right)_{n\in\Nn}$ converges $R$-linearly to zero. This is equivalent to saying that there exists $c\in]0,1[$, $\kappa\in]0,\infty[$, $\underline{n}\in\Nn$ such that for all $n\geq\underline{n}$, $\|z_{n+1}-z_n\|_S\leq \kappa c^n$ holds. 
	Thus, for any $j>k\geq\underline n$ we have 
	\begin{equation}\label{eq:R-linear}
	\|z_j-z_k\|_S  \leq \textstyle{\sum_{i=k}^{j-1}}\|z_{i+1}-z_i\|_S\leq \sum_{i=k}^{j-1}\kappa c^i\leq \sum_{i=k}^{\infty}\kappa c^i=\frac{\kappa}{1-c}c^k.
	\end{equation}
	Hence, the sequence $(z_n)_{n\in\Nn}$ is a Cauchy sequence, and therefore converges to some $z\in\HH$. From uniqueness of weak limit and \Cref{thm:Suppose-that-}\eqref{thm-part3} we have $z\in\zer(T)$. Let $j\rightarrow\infty$ in~\eqref{eq:R-linear} to obtain $R$-linear convergence of $(z_n)_{n\in\Nn}$. 
\end{proof}
In the special case when $C\equiv0$, $M$
is skew-adjoint , $K=M$ and $S=P$, the operator $P\in\mathscr{B}(\HH)$ can be a self-adjoint,
positive operator rather than a strongly positive operator. Under these assumptions AFBA simplifies to the following iteration:
\begin{subequations}
\begin{align}
\bar{z}_{n} & =(H+A)^{-1}Pz_{n} \label{Algorithm-Pi}\\
z_{n+1} & = z_n + \lambda_n (\bar{z}_{n}-z_n). \label{Algorithm-Pii}
\end{align}
\end{subequations}
Notice that if $P$ was strongly positive, this could simply be seen as proximal point algorithm in a different metric applied to the operator $A+M$, but we have relaxed this assumption and only require $P$ to be positive. Before providing convergence results for this algorithm we begin with the following lemma, showing that the mapping $(H+A)^{-1}$ has full domain and is continuous when $H$ has a block triangular structure with strongly positive diagonal blocks, even though its symmetric part, $P$, might not be strongly positive. This lemma motivates the assumption on continuity of $(H+A)^{-1}P$ in 
\Cref{thm:Suppose-that--1}. As an application of this theorem in \Cref{prop:-1}\eqref{prop:-1-part3}, when $P$ is  positive with a two-by-two block structure (see~\eqref{eq:-42-p} in the limiting case $\theta=2$), DRS is recovered. 

\begin{lem} \label{lem:separable}
	Let $\HH=\HH_1\oplus\cdots\oplus\HH_N$, where  $\HH_1,\cdots,\HH_N$ are real Hilbert spaces. 
	Suppose that $A$ is block separable and $H$ has a conformable lower (upper) triangular partitioning, \ie,   
	\[
	A:z\mapsto(A_1z_1,\cdots,A_Nz_N),
	\]
	\begin{equation} \label{eq:blclowerH}
	H:z\mapsto(H_{11}z_1,H_{21}z_1+H_{22}z_2,\cdots, \sum_{j=1}^{N}H_{Nj}z_j),
	\end{equation}    
	where $z_i\in\HH_i$ for $i=1,\cdots,N$,  $z=(z_1,\cdots,z_N)\in\HH$, and $H_{ij}\in\mathscr{B}(\HH_j,\HH_i)$ for $i,j=1,\cdots,N$. For $i=1,\cdots,N$, assume that $A_i$ is maximally monotone, and  $H_{ii}\in\mathcal{S}_{\tau_i}(\HH_i)$ with $\tau_i\in]0,\infty]$. Then, the mapping $(H+A)^{-1}$ is continuous and has full domain, \ie, $\dom((H+A)^{-1})=\HH$. Furthermore, the update $\bar{z}=(H+A)^{-1}z$ is carried out using
	\begin{equation}\label{eq:resolventform}
	\bar{z}_i=\begin{cases}
	\left(H_{11}+A_1\right)^{-1}z_1, &  \quad i=1;\\
	\left(H_{ii}+A_i\right)^{-1}\left(z_i-\sum_{j=1}^{i-1}H_{ij}\bar{z}_j\right), & \quad i=2,\cdots,N,
	\end{cases}
	\end{equation}
	where $\bar{z}=(\bar{z}_1,\cdots,\bar{z}_N)\in\HH$.
\end{lem}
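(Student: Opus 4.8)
The plan is to proceed by induction on the block index $i$, exploiting the triangular structure of $H$ so that at each stage the ``new'' unknown $\bar z_i$ is determined by solving a single-block inclusion whose operator is $H_{ii}+A_i$. First I would observe that because $H$ is (say) lower triangular, the equation $z \in H\bar z + A\bar z$ decouples sequentially: the first component reads $z_1 \in H_{11}\bar z_1 + A_1\bar z_1$, and once $\bar z_1,\dots,\bar z_{i-1}$ are known the $i$-th component reads $z_i - \sum_{j=1}^{i-1}H_{ij}\bar z_j \in H_{ii}\bar z_i + A_i\bar z_i$, i.e. $\bar z_i = (H_{ii}+A_i)^{-1}(z_i - \sum_{j<i}H_{ij}\bar z_j)$. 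This is precisely formula \eqref{eq:resolventform}, so the content of the lemma reduces to showing that each single-block map $(H_{ii}+A_i)^{-1}$ is single-valued, everywhere defined, and continuous, and then assembling these facts.

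The key single-block fact is: if $A_i$ is maximally monotone and $H_{ii}\in\mathcal S_{\tau_i}(\HH_i)$ with $\tau_i>0$, then $(H_{ii}+A_i)^{-1}$ has full domain, is single-valued, and is in fact Lipschitz continuous with constant $1/\tau_i$. I would establish this by working in the renormed space $\HH_{i,H_{ii}}$ (the Hilbert space $\HH_i$ endowed with $\langle\cdot,\cdot\rangle_{H_{ii}}$): there $H_{ii}^{-1}A_i$ is maximally monotone, $H_{ii}+A_i = H_{ii}(\id + H_{ii}^{-1}A_i)$, so $(H_{ii}+A_i)^{-1} = J_{H_{ii}^{-1}A_i}^{H_{ii}} \circ H_{ii}^{-1}$, a composition of a (firmly nonexpansive, hence $1$-Lipschitz in $\|\cdot\|_{H_{ii}}$) resolvent with the bounded linear map $H_{ii}^{-1}$; Minty's theorem gives full domain. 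Translating back to the canonical norm yields single-valuedness, $\dom = \HH_i$, and Lipschitz continuity. Alternatively one can cite strong monotonicity of $H_{ii}+A_i$ together with \cite[Proposition 23.11 / Corollary 23.41]{bauschke2011convex}-type statements; either route is routine.

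With the single-block claim in hand, the induction closes easily: $\bar z_1$ depends continuously on $z_1$; assuming $\bar z_1,\dots,\bar z_{i-1}$ depend continuously on $z$, the argument $z_i - \sum_{j<i}H_{ij}\bar z_j$ of the $i$-th resolvent is continuous in $z$ (composition of continuous maps with bounded linear maps), and composing with the continuous map $(H_{ii}+A_i)^{-1}$ shows $\bar z_i$ is continuous in $z$; full domain propagates the same way since each $(H_{ii}+A_i)^{-1}$ is everywhere defined. Hence $(H+A)^{-1}$ is everywhere defined and continuous, and is computed by \eqref{eq:resolventform}. The upper-triangular case is identical after reversing the order of the recursion. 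I expect no genuine obstacle here; the only point requiring care is the single-block continuity claim when $\tau_i$ is allowed to be $+\infty$ (the degenerate case $H_{ii}$ ``infinitely'' positive, which in practice just means the block is absent or the resolvent is trivial), and making sure the renorming argument is stated cleanly so that continuity is asserted with respect to the original norm rather than the block-dependent one.
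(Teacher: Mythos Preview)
Your proposal is correct and follows essentially the same approach as the paper: both arguments decouple the inclusion $z\in(H+A)\bar z$ block by block using the triangular structure of $H$, establish that each $(H_{ii}+A_i)^{-1}$ is single-valued, everywhere defined, and continuous via strong monotonicity of $H_{ii}+A_i$ together with Minty's theorem in the $H_{ii}$-renormed space, and then propagate continuity and full domain through the recursion by composition. The only cosmetic differences are that you frame the block-by-block argument as a formal induction and that the paper uses $A_iH_{ii}^{-1}$ rather than $H_{ii}^{-1}A_i$ in the renorming step for full domain; both variants work.
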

\begin{proof}
		We consider a block lower triangular $H$ as in~\eqref{eq:blclowerH}, the analysis for upper triangular case is identical. The goal is to consider $A_i$'s separately. 	Let $\bar{z}=(H+A)^{-1}z$ with $\bar{z}=(\bar{z}_1,\cdots,\bar{z}_N)$.  The block triangular structure of $H$ in~\eqref{eq:blclowerH} yields the  equivalent inclusion $z_i\in A_i\bar{z}_i+ \sum_{j=1}^{i}H_{ij}\bar{z}_j$, for $i=1,\cdots,N$. 
		This is equivalent to~\eqref{eq:resolventform}, in which, each $\bar{z}_i$ is evaluated using $z_i$ and $\bar{z}_j$ for $j<i$. 
		For the first block we have $\bar{z}_1=(H_{11}+A_1)^{-1}z_1$. 	Since $A_1$ is monotone and $H_{11}$ is strongly monotone, it follows that $H_{11}+A_1$ is strongly monotone, which in turn implies that $(H_{11}+A_1)^{-1}$ is cocoercive and, as such, at most single-valued and continuous. Since $A_1$ is maximally monotone and $H_{11}$ is strongly positive we have \[\dom\left((H_{11}+A_1)^{-1}\right)=\ran(H_{11}+A_1)=\ran(\id+A_1H_{11}^{-1})=\HH_1,\] where the last equality follows from maximal monotonicity of $A_1 H_{11}^{-1}$ in the Hilbert space defined by endowing $\HH_1$ with the scalar product $\langle \cdot,\cdot\rangle_{H_{11}^{-1}}$, and
		Minty's theorem \cite[Theorem 21.1]{bauschke2011convex}. 
		For the second block in~\eqref{eq:resolventform} we have 
		$\bar{z}_2= (H_{22}+A_2)^{-1}(z_2-H_{21}\bar{z}_1)$.
		Hence, by the same argument used for previous block, $(H_{22}+A_2)^{-1}$ is continuous and has full domain. Since $(z_2-H_{21}\bar{z}_1)$ and $(H_{22}+A_2)^{-1}$ are continuous, so is their composition $(H_{22}+A_2)^{-1}(z_2-H_{21}\bar{z}_1)$.
		Follow the same argument for the remaining blocks in~\eqref{eq:resolventform} to conclude that $(H+A)^{-1}$ is continuous and has full domain. 
\end{proof}
Next theorem provides convergence and rate of convergence results for algorithm \eqref{Algorithm-Pi}-\eqref{Algorithm-Pii} in finite-dimensions by employing the same idea used in \cite[Theorem 3.3]{condat2013primal}. The idea is to consider the operator $R=P+\id-Q$, where $Q$ is the orthogonal projection onto $\ran(P)$. The proof presented here is for a general $P$ and it coincides with the one of Condat \cite[Theorem 3.3]{condat2013primal} for a special choice of $P$ (when $P$ is defined as in~\eqref{eq:-17-1} with $\theta=2$).   

\begin{thm}
	\label{thm:Suppose-that--1} Suppose that $\HH$ is finite-dimensional. Let $P\in \mathcal{S}(\HH)$, $P\succeq0$,  $M\in\mathscr{B}(\HH)$ a
	skew-adjoint operator and $H=P+M$. Consider the iteration \eqref{Algorithm-Pi}-\eqref{Algorithm-Pii} and assume $\zer (T)\neq \textrm{\O}$ where $T=A+M$. Furthermore, assume that $(H+A)^{-1}P$ is continuous. Let $(\lambda_{n})_{n\in\Nn}$ be uniformly bounded in the interval $]0,2[$. Then,
	\begin{enumerate}[{label=(\textit{\roman*})},ref=\textit{\roman{*}}]
		\item \label{thm-P-part3} $(z_{n})_{n\in\Nn}$ converges to a point in $\zer (T)$.
		\item \label{thm-P-part4} Let $Q$ be the orthogonal projection onto $\ran(P)$, and $R=P+\id-Q$. 
		The following convergence estimates hold:
		\begin{equation} \label{positiv-P-rate} 
			\|{P}z_{n+1}-Pz_n\|^{2}\leq\tfrac{\|P\|}{\tau(n+1)}\|Qz_{0}-Qz^{\star}\|_{R}^{2},
		\end{equation}
		for some constant $\tau>0$, and $\|Pz_{n+1}-Pz_n\|^{2}=o(1/(n+1))$.
	\end{enumerate}
\end{thm}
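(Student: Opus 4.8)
The plan is to run the analysis on the orthogonal splitting $\HH=\ran(P)\oplus\ker(P)$ determined by the self-adjoint operator $P$. Let $Q$ be the orthogonal projector onto $\ran(P)$, so $\id-Q$ projects onto $\ker(P)$ and $PQ=P$, and decompose $z_n=p_n+q_n$ with $p_n=Qz_n$, $q_n=(\id-Q)z_n$. Since $Pz_n=Pp_n$, the map $G\coloneqq(H+A)^{-1}P$ reads off only the $\ran(P)$-part of its argument, i.e.\ $G(z)=G(Qz)$, so the iteration decouples:
\begin{equation*}
p_{n+1}=(1-\lambda_n)p_n+\lambda_n\,QG(p_n),\qquad q_{n+1}=(1-\lambda_n)q_n+\lambda_n\,(\id-Q)G(p_n),
\end{equation*}
the first recursion being self-contained on $\ran(P)$ and the second merely driven by $(p_n)$. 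I would first record two facts. (a) $\fix(G)=\zer(T)$, because $z=(H+A)^{-1}Pz\Leftrightarrow Pz-Hz\in Az\Leftrightarrow -Mz\in Az$. (b) On $\ran(P)$ endowed with $\langle\cdot,\cdot\rangle_P$ — a genuine inner product there, equivalent to the canonical one since $\HH$ is finite-dimensional and $P|_{\ran(P)}$ is strongly positive — the operator $QG|_{\ran(P)}$ is firmly nonexpansive: applying monotonicity of $A$ to $Pz-H\bar z\in A\bar z$ and $Pz'-H\bar z'\in A\bar z'$ and using $M^*=-M$ to cancel the cross terms (re-deriving the $C\equiv0$ specialization of~\eqref{eq:42-42-11} directly, so that no $P^{-1}$ is needed) gives $\|Gz-Gz'\|_P^2\le\|z-z'\|_P^2-\|(z-Gz)-(z'-Gz')\|_P^2$, which passes to $QG$ on $\ran(P)$ because $\|w\|_P=\|Qw\|_P$ for all $w$. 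From (a), $\fix(QG|_{\ran(P)})\neq\emptyset$ iff $\zer(T)\neq\emptyset$ (a zero $z^\star$ gives the fixed point $Qz^\star$; a fixed point $\bar p$ lifts to $G(\bar p)\in\zer(T)$).

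For part~\eqref{thm-P-part3} I would proceed in three steps. First, the Krasnosel'ski\v{\i}--Mann theorem applied to the firmly nonexpansive $QG|_{\ran(P)}$ — the $p$-recursion has relaxation parameter $\lambda_n/2$, which lies in a compact subinterval of $]0,1[$ — yields $p_n\to\bar p$ for some $\bar p\in\fix(QG|_{\ran(P)})$, the convergence being strong because $\HH$ is finite-dimensional. Second, $z_\infty\coloneqq G(\bar p)$ satisfies $Qz_\infty=QG(\bar p)=\bar p$, hence $Pz_\infty=P\bar p$ and $z_\infty\in\fix(G)=\zer(T)$. Third, by continuity of $(H+A)^{-1}P$ one has $\bar z_n=G(z_n)=G(p_n)\to G(\bar p)=z_\infty$, so $(\id-Q)\bar z_n\to(\id-Q)z_\infty$; subtracting the trivial identity $(\id-Q)z_\infty=(1-\lambda_n)(\id-Q)z_\infty+\lambda_n(\id-Q)z_\infty$ from the $q$-recursion gives $\|q_{n+1}-(\id-Q)z_\infty\|\le\rho\,\|q_n-(\id-Q)z_\infty\|+\lambda_n\|(\id-Q)\bar z_n-(\id-Q)z_\infty\|$ with $\rho\coloneqq\sup_n|1-\lambda_n|<1$ (because $\lambda_n$ stays away from $0$ and $2$), whence $q_n\to(\id-Q)z_\infty$. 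Therefore $z_n=p_n+q_n\to z_\infty\in\zer(T)$.

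For part~\eqref{thm-P-part4}, set $\tilde z_n\coloneqq\bar z_n-z_n=\lambda_n^{-1}(z_{n+1}-z_n)$. Carrying out the same cross-term cancellation at consecutive iterates (mirroring the proof of \Cref{Thm: conv-rates}\eqref{thm2-part1}, with the seminorm $\|\cdot\|_P$ replacing $\|\cdot\|_D$) gives $\lambda_n\langle P\tilde z_n,\tilde z_n-\tilde z_{n+1}\rangle\ge\|\tilde z_n-\tilde z_{n+1}\|_P^2$, and combining this with the identity $\|a\|_P^2-\|b\|_P^2=2\langle Pa,a-b\rangle-\|a-b\|_P^2$ and $\lambda_n<2$ shows that $(\|\tilde z_n\|_P^2)_{n\in\Nn}$ is nonincreasing. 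Separately, the fixed-point inequality $\langle z_n-z^\star,P\tilde z_n\rangle\le-\|\tilde z_n\|_P^2$ (again the $C\equiv0$ specialization of~\eqref{eq:42-42-11}, with $M^*+H=P$) together with $z_{n+1}=z_n+\lambda_n\tilde z_n$ gives $\|z_{n+1}-z^\star\|_P^2\le\|z_n-z^\star\|_P^2-\lambda_n(2-\lambda_n)\|\tilde z_n\|_P^2$; with $\tau_0\coloneqq\inf_n\lambda_n(2-\lambda_n)>0$, telescoping yields $\sum_n\|\tilde z_n\|_P^2\le\tau_0^{-1}\|z_0-z^\star\|_P^2$. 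Monotonicity and summability then give $\|\tilde z_n\|_P^2\le\tfrac{1}{(n+1)\tau_0}\|z_0-z^\star\|_P^2$ and, via \cite[Lemma 3-(1a)]{davis2014convergence}, $\|\tilde z_n\|_P^2=o(1/(n+1))$. Finally, $\|Pz_{n+1}-Pz_n\|^2=\lambda_n^2\|P\tilde z_n\|^2\le\lambda_n^2\|P\|\,\|\tilde z_n\|_P^2\le4\|P\|\,\|\tilde z_n\|_P^2$ (using $P^2\preceq\|P\|P$), and since $Qz_0-Qz^\star\in\ran(P)$, on which $R$ reduces to $P$, we have $\|z_0-z^\star\|_P^2=\|Qz_0-Qz^\star\|_R^2$; this delivers~\eqref{positiv-P-rate} with $\tau=\tau_0/4$, and the little-$o$ estimate.

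The step I expect to be the main obstacle is handling the degeneracy of $P$: $\|\cdot\|_P$ is only a seminorm and controls nothing on $\ker(P)$, so a priori there is neither boundedness nor Fej\'er monotonicity of $(z_n)$ in any fixed norm, and $G$ is not firmly nonexpansive in $\|\cdot\|_R$. The decoupling above is exactly what circumvents this: the contraction/Krasnosel'ski\v{\i}--Mann analysis is genuine on $\ran(P)$, and convergence is transported to $\HH$ through continuity of $(H+A)^{-1}P$ together with the damping estimate for the $\ker(P)$-component — this is precisely where the continuity hypothesis and finite-dimensionality are used. Everything else is a faithful seminorm transcription of the arguments already given for \Cref{thm:Suppose-that-} and \Cref{Thm: conv-rates}.
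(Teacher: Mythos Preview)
Your proof is correct and follows essentially the same route as the paper: split via the projector $Q$ onto $\ran(P)$, run the Fej\'er/Krasnosel'ski\v{\i}--Mann argument on the $\ran(P)$-component in the $P$-inner product (the paper does the same computations through the auxiliary operator $R=P+\id-Q$, the link being the identity $\|Qw\|_R=\|w\|_P$ that you also implicitly use), and then lift back to $\HH$ via continuity of $(H+A)^{-1}P$. The only cosmetic difference is that you give the $\ker(P)$-component convergence explicitly via the damping estimate on $q_n$, whereas the paper defers that last step to \cite[Theorem~3.3]{condat2013primal}; the rate argument in part~\eqref{thm-P-part4} is likewise the same up to the choice of working with $\|\cdot\|_P$ rather than $\|Q\cdot\|_R$.
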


\begin{proof}
	\eqref{thm-P-part3}: Since $P$ is not strongly positive, it does not define a valid inner product. Consider $R\coloneqq P+\id-Q$, where $Q$ is the
	orthogonal projection onto $\ran(P)$. We show that by construction $R$ is strongly positive.  
	By the spectral theorem we can write 
	$Pz_1=U\Lambda U^*z_1$, where $U$ is an orthonormal basis consisting of eigenvectors of $A$. Consider two sets: $s_1=\{i|\lambda_i\neq0\}$ and $s_2=\{i|\lambda_i=0\}$. 
	Denote by $U_1$ the orthonormal basis made up of $u_i$ for $i\in s_1$. Then $\ran{(P)}=\ran{(U_1)}$ and we have $Q=U_1U_1^*$. For any $z\in \HH$, $z=z_{1}+z_{2}$ where $z_{1}=Qz$ and $z_{2}=(\id-Q)z$. Then $Rz=Pz+z-Qz=Pz_{1}+z_{2}$ and $\langle Rz,z\rangle =\langle Pz_{1}+z_{2},z\rangle =\langle Pz_{1},z_{1}\rangle+\|z_{2}\|^{2}$.
	If $z_1=0$ then $\langle Rz,z\rangle=\|z_{2}\|^{2}=\|z\|^{2}$. 
	Suppose that $z_2\neq0$ and $z_1\neq 0$. Denote by $\lambda_{\min}$ the smallest non zero eigenvalue of $P$. We have
	\begin{align*} 
		\langle Rz,z\rangle & =\langle Pz_{1},z_{1}\rangle+\|z_2\|^2  = \langle U\Lambda U^* z_{1},z_{1}\rangle+\|z_2\|^2 = \langle \Lambda U^* z_{1},U^* z_{1}\rangle+\|z_2\|^2\\
		& \geq \lambda_{\min}\|U_1^*z_1\|^2+\|z_2\|^2= \lambda_{\min} \langle z_1,Qz_1 \rangle+\|z_2\|^2= \lambda_{\min}\|z_1\|^2+\|z_2\|^2 \\
		&\geq \min\{1,\lambda_{\min}\}\|z\|^{2}.
	\end{align*} 
	If $z_2=0$ the above analysis holds with $z=z_1$ and result in strong positivity parameter equal to $\lambda_{\min}$. 
	
	We continue by noting that by definition we have $Q\circ P=P$, and symmetry of $P$ yields $P\circ Q=P$. Therefore, $R\circ Q=P$ and for $z\in\HH$ we have
	\begin{align}
		\langle Pz,z\rangle=  	\langle QPz,z\rangle  
		= 	\langle Pz,Qz\rangle 
		=  \langle RQz,Qz\rangle, \label{eq:RQ}
	\end{align} 
	which will be used throughout this proof. Observe now that for $z^{\star}\in\zer(T)\neq \emptyset$ we have $-Mz^{\star}\in A z^{\star}$. By monotonicity of ${A}$ and~\eqref{Algorithm-Pi} we have 
	$\langle M\bar{z}_{n}-Mz^{\star}+P\tilde{z}_{n},z^{\star}-\bar{z}_{n}\rangle \geq 0$.	Then
	\begin{align}
		0 & \leq \langle M\bar{z}_{n}-Mz^{\star}+P\tilde{z}_{n},z^{\star}-\bar{z}_{n}\rangle=\langle P\tilde{z}_{n},z^{\star}-\bar{z}_{n}\rangle \nonumber\\
		& =\langle P\tilde{z}_{n},Qz^{\star}-Q\bar{z}_{n}\rangle + \langle RQ\tilde{z}_{n},Q\tilde{z}_{n}\rangle-\langle RQ\tilde{z}_{n},Q\tilde{z}_{n}\rangle\nonumber\\
		& =\langle P\tilde{z}_{n},Qz^{\star}-Qz_{n}\rangle-\|Q\tilde{z}_{n}\|_{R}^{2}, \label{eq:pp2}
	\end{align}
	where the equalities follows from skew-symmetricity  of $M$ 
	and \eqref{eq:RQ}. We show that $\|Qz_{n}-Qz^{\star}\|_{R}$ is decreasing using \eqref{eq:pp2}:
	\begin{align}
		\|Qz_{n+1}-Qz^{\star}\|_{R}^{2} & =\|Qz_{n}-Qz^{\star}+\lambda_n Q\tilde{z}_{n}\|_{R}^{2} \nonumber\\
		& =\|Qz_{n}-Qz^{\star}\|_{R}^{2}+2\lambda_n \langle Qz_{n}-Qz^{\star},P\tilde{z}_{n}\rangle+\lambda_n^2 \|Q\tilde{z}_{n}\|_{R}^{2}\nonumber\\
		& \leq\|Qz_{n}-Qz^{\star}\|_{R}^{2}-\lambda_n(2-\lambda_n)\|Q\tilde{z}_{n}\|_{R}^{2}. \label{eq:fejer mon}
	\end{align}
	
	Let us define the sequence $x_{n}=Qz_n$ and $\bar{x}_n=Q\bar{z}_n$ for every $n\in\Nn$.  Then, since $P\circ Q=P$ the iteration for $x_n=Qz_n$ is written as
	\begin{align}
		\bar{x}_{n} & =Q(H+A)^{-1}P(x_{n})\nonumber\\
		x_{n+1} & = x_n + \lambda_n (\bar{x}_{n}-x_n)\label{eq1:-6}.
	\end{align}
	Let $G=(H+A)^{-1}P$ and $G^\prime=Q\circ G$. 
	It follows from $H=P+M$ and \eqref{Algorithm-Pi} that 
	\begin{equation} \label{eq1:-11}
	0  \in T(G(z)) + PG(z)-Pz. 
	\end{equation}
	Use~\eqref{eq1:-11} and monotonicity of $T$ at $z^\star\in\zer(T)$ and $G(z^\star)$ to derive 
	\begin{equation} \label{eq1:-4}
	0\leq\langle G(z^\star)-z^\star,Pz^\star-PG(z^\star)\rangle.
	\end{equation}
	In view of~\eqref{eq1:-4} 
	and positivity of $P$, we have $\langle G(z^\star)-z^\star,PG(z^\star)-Pz^\star\rangle = 0$, and by \cite[Corollary 18.17]{bauschke2011convex}, $PG(z^\star)-Pz^\star=0$. Hence, since $R\circ Q=P$, we have $RQG(z^\star)-RQz^\star=0$, and 
	strong positivity of $R$ implies  $Qz^\star=QG(z^\star)=QG(Qz^\star)$, where the last equality is due to $G\circ Q=G$. Thus, $Qz^\star$ is a fixed point of $G^\prime=QG$. We showed that if $z^\star\in\zer(T)$ then $Qz^\star\in\fix(G^\prime)$, \ie, 
	\begin{equation} \label{eq:subset1}
	Q\zer(T)\subseteq \fix(G^\prime).
	\end{equation}
	Furthermore, for any $x^\star\in\fix(G^\prime)$ we have $Px^\star=PG^\prime(x^\star)=PQG(x^\star)=PG(x^\star)$.
	Combine this with \eqref{eq1:-11} to derive $G(x^\star)\in\zer(T)$. Therefore, $x^\star=G^\prime(x^\star)=QG(x^\star)\in Q\zer(T)$. This shows that if  $x^\star\in\fix(G^\prime)$, then  $x^\star\in Q\zer(T)$, \ie, $\fix(G^\prime)\subseteq Q\zer(T)$. Combine this with~\eqref{eq:subset1} to conclude that the two sets $\fix(G^\prime)$ and $Q\zer(T)$ are the same.  
	On the other hand, we rewrite~\eqref{eq:fejer mon} for $x_n$ and $\bar{x}_n$: 
	\begin{align}
		\|x_{n+1}-Qz^{\star}\|_{R}^{2}  \leq\|x_{n}-Qz^{\star}\|_{R}^{2}-\lambda_n(2-\lambda_n)\|\bar{x}_{n}-x_n\|_{R}^{2}. \label{eq:x fejer mon}
	\end{align}
	Therefore, $(x_{n})_{n\in\Nn}$ is F\'{e}jer monotone with respect to $\fix(G^\prime)$ in the Hilbert space $\HH_R$. Since  $(\lambda_{n})_{n\in\Nn}$ is uniformly bounded in $]0,2[$, it follows that 
	\begin{equation}\label{eq:-122}
	G^\prime(x_n)-x_n=\bar{x}_n-x_n\to 0.
	\end{equation} 
	Let $x$ be a sequential cluster point of $(x_n)_{n\in\Nn}$, say $x_{k_n}\to x$. $G^\prime$ is continuous since $G^\prime=Q\circ G$ and $G$ is assumed to be continuous. Thus, it follows from~\eqref{eq:-122} that
	$G^\prime(x)-x=0$, \ie, $x\in \fix(G^\prime)$. This together with F\'{e}jer monotonicity of $x_n$ with respect to $\fix(G^\prime)$ and \cite[Theorem 5.5]{bauschke2011convex} yields  $x_n\to x\in\fix(G^\prime)$. 
	
	The proof is completed by first using $G\circ Q=G$ and continuity of $G$ to deduce that $\bar{z}_n=G(z_n)=G(x_n)$ converges to $G(x^\star)\in\zer(T)$, and then arguing for convergence of $z_n$. 
	We skip the details here because they are identical to the last part of the proof in \cite[Theorem 3.3]{condat2013primal}).
	
	\eqref{thm-P-part4}: Follow the procedure in the proof of \Cref{Thm: conv-rates} to derive~\eqref{eqnn:-20}, except that in this case the cocoercive term is absent. This yields
	\small
	\begin{equation} 
	0 \leq \langle-M(z_{n}-z_{n+1})-H(\tilde{z}_{n}-\tilde{z}_{n+1}),\tilde{z}_{n}-\tilde{z}_{n+1}\rangle 
	- \langle H(\tilde{z}_{n}-\tilde{z}_{n+1}),z_{n}-z_{n+1}\rangle.\label{eqnn:-21}
	\end{equation} \normalsize
	Since $H=P+M$ and $M$ is skew-symmetric,~\eqref{eqnn:-21} simplifies to
	\begin{align} 
		0 & \leq \langle-P(\tilde{z}_{n}-\tilde{z}_{n+1}),\tilde{z}_{n}-\tilde{z}_{n+1}\rangle + \langle -P(z_{n}-z_{n+1}),\tilde{z}_{n}-\tilde{z}_{n+1}\rangle \nonumber \\
		& = -\|Q\tilde{z}_{n}-Q\tilde{z}_{n+1}\|_R^2 + \lambda_n\langle P\tilde{z}_{n},\tilde{z}_{n}-\tilde{z}_{n+1}\rangle, \label{eq:-120}
	\end{align}
	where we used~\eqref{eq:RQ}  and~\eqref{Algorithm-Pii}. Using  identity~\eqref{eq:identity1}, we derive
	\begin{align}
		\|Q\tilde{z}_{n}\|_{R}^{2}-\|Q\tilde{z}_{n+1}\|_{R}^{2}&=2\langle RQ\tilde{z}_{n},Q\tilde{z}_{n}-Q\tilde{z}_{n+1}\rangle-\|Q\tilde{z}_{n}-Q\tilde{z}_{n+1}\|_{R}^{2} \nonumber\\
		&=2\langle P\tilde{z}_{n},\tilde{z}_{n}-\tilde{z}_{n+1}\rangle-\|Q\tilde{z}_{n}-Q\tilde{z}_{n+1}\|_{R}^{2} \nonumber\\
		& \geq 
		\left(\tfrac{2}{\lambda_n}-1\right)\|Q\tilde{z}_{n}-Q\tilde{z}_{n+1}\|_{R}^{2}, \label{eqnn:-22} 
	\end{align}
	where we made use of~\eqref{eq:-120}. 
	Consider~\eqref{eq:fejer mon} and sum over $n$ to derive
	\begin{equation}\label{eq1:-100}
	\textstyle{\sum_{i=0}^{\infty}} \lambda_i(2-\lambda_i)\|Q\tilde{z}_i\|_R^2\leq \|Qz_0-Qz^\star\|^2_R.
	\end{equation}
	Inequality~\eqref{eqnn:-22} shows that $\|Q\tilde{z}_{n}\|_{R}^{2}$ is monotonically nonincreasing.
	Combine this with~\eqref{eq1:-100} and uniform boundedness of $\lambda_n$, \ie,  $(\lambda_{n})_{n\in\Nn}\subseteq[\epsilon,2-\epsilon]$ for some $\epsilon>0$, to derive 
	\begin{equation}\label{eq1:-7} 
	\|Q\tilde{z}_{n}\|_{R}^{2}\leq\frac{1}{(n+1)\epsilon^2}\|Qz_{0}-Qz^{\star}\|_{R}^{2}.
	\end{equation}
	Furthermore, it follows from \eqref{eq1:-6} and definition of $x_n,\bar{x}_n$ that
	\begin{equation} \label{eqnn:Qzbar-x}
	\|x_{n+1}-x_n\|_R^2=\lambda_n^2 \|Q\tilde{z}_{n}\|^2_R\leq (2-\epsilon)^2\|Q\tilde{z}_{n}\|^2_R.
	\end{equation}
	Combine~\eqref{eqnn:Qzbar-x} and \eqref{eq1:-7} to derive
	\begin{equation} \label{eq:bigO-x} 
	\|x_{n}-x_{n+1}\|_{R}^{2}\leq\frac{(2-\epsilon)^2}{(n+1)\epsilon^2}\|Qz_{0}-Qz^{\star}\|_{R}^{2}.
	\end{equation}
	This establishes big-$O$ convergence for $(x_{n})_{n\in\Nn}$.
	The little-$o$ convergence of $\|Q\tilde{z}_{n}\|_{R}^{2}$ and subsequently $\|x_{n}-x_{n+1}\|_{R}^{2}$ follows from~\eqref{eqnn:-22},~\eqref{eq1:-100} and \cite[Lemma 3-(1a)]{davis2014convergence}. We derive from~\eqref{eq:RQ} that $\|x_{n}-x_{n+1}\|_{R}^{2}= \langle  z_{n}-z_{n+1},P(z_{n}-z_{n+1})\rangle$. 
	Then it follows from \cite[Corollary 18.17]{bauschke2011convex} that
	\begin{equation} \label{eq:-102}
	\|Pz_{n}-Pz_{n+1}\|^{2}\leq \|P\|\|x_{n}-x_{n+1}\|_{R}^{2}.
	\end{equation} 
	Set $\tau=\tfrac{\epsilon^2}{(2-\epsilon)^2}$, and combine~\eqref{eq:-102} with~\eqref{eq:bigO-x} to yield big-$O$ convergence for the sequence $(Pz_n)_{n\in\Nn}$. Similarly little-$o$ convergence follows from that property of $\|x_{n}-x_{n+1}\|_{R}^{2}$.
\end{proof}

\section{Operator Splitting Schemes as Special Cases} \label{sec:special cases}
We are ready to consider some important special cases to illustrate the importance of parameters $S$, $P$ and $K$. Further discussion on other special choices for the parameters appear in \Cref{sec:Saddle-point-problem} in the framework of convex optimization with the understanding that it is straightforward to adapt the same analysis for the corresponding monotone inclusion problem.
\subsection{Forward-Backward Splitting}
 When $H=\gamma^{-1} \id$, $S=\id$ and $M\equiv0$,  \Cref{Algorithm-1} reduces to FBS. Let $\beta$ be the cocoercivity constant of $C$ with respect to the canonical norm $\|\cdot\|$, then $\beta/\gamma$ is the cocoercivity constant with respect to the $P$ norm 
 and condition~\eqref{eq:-7} of \Cref{thm:Suppose-that-} becomes
\begin{equation}
(\lambda_{n})_{n\in\Nn}\subseteq[0,\delta] \quad \textrm{with} \quad \delta=2-\frac{\gamma}{2\beta},\quad\gamma\in]0,4\beta[,\quad\liminf_{n\to\infty} \,\lambda_{n}(\delta-\lambda_{n})>0. \label{forwardeq}
\end{equation} 
This allows a wider range of parameters than the standard ones found in the literature. The standard convergence results for FBS are based on the theory of averaged operators (see \cite{combettes2015compositions} and the references therein) and yield the same conditions as in \eqref{forwardeq} but with $\gamma\in]0, 2\beta]$ (see also \cite[Lemma 4.4]{condat2013primal} and  a slightly more conservative version in \cite[Theorem 25.8]{bauschke2011convex}). 
Additionally, if $C\equiv0$, FBS reduces to the classical PPA. 
 
The convergence rate for FBS follows directly from \Cref{Thm: conv-rates}. Since $D=\gamma^{-2}\id$ and $P=\gamma^{-1}\id$,~\eqref{eqnn:-12} holds with $c_1=c_2=\gamma^{-1}$. Consequently, if $(\lambda_n(\delta-\lambda_n))_{n\in\Nn}\subseteq[\tau,\infty[$ for some $\tau>0$, we have 
 \begin{equation*} 
 \|\tilde{z}_{n}\|^{2}\leq\frac{1}{\tau(n+1)}\|z_{0}-z^{\star}\|^{2},
 \end{equation*}
and $\|\tilde{z}_{n}\|^{2}=o(1/(n+1))$.
\subsection{Solodov and Tseng}
 In \Cref{Algorithm-1}, set $C\equiv0$, $A=N_X$ and $H=\id$, where $N_X$ is the normal cone operator of $X$, and $X$ is a nonempty closed, convex set in $\HH$. Then we recover the scheme proposed by Solodov and Tseng \cite[Algorithm 2.1]{solodov1996modified}. 
 \subsection{Forward-Backward-Forward Splitting}
Consider~\Cref{Algorithm-1} when $M$ is skew-adjoint and set $H=\gamma^{-1} \id$, $S=\id$. We can enforce $\alpha_n=\gamma$  by choosing $\lambda_n=(\gamma\|(\gamma^{-1} \id +M^{*})\tilde{z}_{n}\|/\|\tilde{z}_{n}\|)^{2}$. It remains to show that the sequence $(\lambda_n)_{n\in\Nn}$ satisfies the conditions of \Cref{thm:Suppose-that-}.  
Since $M$ is skew-adjoint, we have $\lambda_n= 1 + (\gamma\|M\tilde{z}_{n}\|/\|\tilde{z}_{n}\|)^{2}$, and if the stepsize satisfies 
$\gamma\in]0,\|M\|^{-1}\sqrt{1-1/(2\beta)}[$, 
then $(\lambda_n)_{n\in\Nn}$ is uniformly bounded between $0$ and $\delta$ (in fact it is larger than $1$) and thus satisfies~\eqref{eq:-7}. Under these assumptions \Cref{Algorithm-1} simplifies to
\begin{subequations}
	\begin{align*}
	\bar{z}_{n} & =(\id+\gamma A)^{-1}(\id-\gamma M-\gamma C)z_{n}\\
	z_{n+1} & =\bar{z}_{n}-\gamma M\left(\bar{z}_n-{z}_n\right).
	\end{align*}
\end{subequations}
This algorithm resembles the FBFS \cite{tseng2000modified}. Indeed, if $C\equiv0$, then the range for the stepsize simplifies to 
$\gamma\in]0,\|M\|^{-1}[$ and yields 
the FBFS when its Lipschitz operator is the skew-adjoint operator $M$.

\subsection{Douglas-Rachford Type with a Forward Term} \label{SUBSEC:DRF}
We now focus our attention on a choice for $P$, $K$ and $S$ that  lead to a new  Douglas-Rachford type splitting with a forward term. In \Cref{sec:3} we consider more general $S$, $P$, $K$ and the algorithm presented here can be derived as a special case in \Cref{DR-ADMM} in which we also discuss a 3-block ADMM algorithm. Consider the problem of finding $x\in \HH$ such that  
\begin{equation}
0\in Dx+Ex+Fx, \label{eq:DR main}
\end{equation}
together with the dual inclusion problem of finding $y\in \HH$ such that there exists $x\in \HH$, 
 \begin{equation} \label{eq:DR primal-dual}
 \begin{cases}
 0\in Dx+Fx+y\\
 0\in E^{-1}y-x.
 \end{cases}
 \end{equation}
where $D:\HH\to2^{\HH}$, $E:\HH\to2^{\HH}$ are maximally monotone and $F:\HH\to\HH$ is 
$\eta$-cocoercive with respect to the canonical norm. Let $\mathcal{K}$ be the Hilbert direct sum  
$\mathcal{K}=\HH\oplus\HH$. The pair $(x^\star,y^\star)\in\mathcal{K}$ is called a primal-dual solution to~\eqref{eq:DR main} if it satisfies~\eqref{eq:DR primal-dual}.
Let $(x^\star,y^\star)\in\mathcal{K}$ be a primal-dual solution, then $x^\star$ solves the primal  problem \eqref{eq:DR main} and $y^\star$ the dual~\eqref{eq:DR primal-dual}. In this section, we assume that there exists $x^{\star}$ such that 
$x^{\star}\in \zer \left(  D + E + F \right)$. This assumption yields that the set of primal-dual solutions is nonempty (see \cite{combettes2012primal,boct2014recent} and the references therein for more discussion). \\ 
Reformulate~\eqref{eq:DR primal-dual} in the form of~\eqref{eq:} by defining
\begin{subequations}
\begin{align}
A:\mathcal{K}\to2^{\mathcal{K}}&:(x,y)\mapsto(Dx,E^{-1}y),\label{eq:Aopt1}\\
M\in\mathscr{B}(\mathcal{K})&:(x,y)\mapsto(y,-x),\label{eq:Mopt1}\\
C:\mathcal{K}\to\mathcal{K}&:(x,y)\mapsto(Fx,0).\label{eq:Copt1}
\end{align}
\end{subequations}
The operators $A$ and $M$ are maximally monotone \cite[Proposition 20.23 and Example 20.30]{bauschke2011convex}. It is easy to verify, by definition of cocoercivity, that $C$ is cocoercive. 
Let $\gamma>0$, $\theta\in [0,2]$,  (the case of $\theta=2$ can only be considered in the absence of the cocoercive term and results in classic DR, see \Cref{prop:-1}). Set
\begin{subequations} \label{eq:-42-all}
\begin{align}
P\in\mathscr{B}(\mathcal{K}):(x,y)&\mapsto\left(\gamma^{-1}x-\tfrac{1}{2}\theta y,-\tfrac{1}{2}\theta x+\gamma y\right), \label{eq:-42-p} \\
K\in\mathscr{B}(\mathcal{K}):(x,y)&\mapsto\left(\tfrac{1}{2}\theta  y,-\tfrac{1}{2}\theta x\right),\label{eq:-42-k} \\
S\in\mathscr{B}(\mathcal{K}):(x,y)&\mapsto\left((3-\theta)\gamma^{-1}x-y,-x+\gamma y\right). \label{eq:-42-S}
\end{align}
\end{subequations}
The operators $P$ and $S$ defined in~\eqref{eq:-42-all} are special cases of~\eqref{eq:-17-1} and~\eqref{eq:S-6} when $L=\id$, $\gamma_1=\gamma_2^{-1}=\gamma$. It follows from \Cref{lem:strongly positive p-1,lem:strongly} that they are strongly positive for $\theta\in[0,2[$. Then $H=P+K$ becomes
\begin{equation}
H\in\mathscr{B}(\mathcal{K}):(x,y)\mapsto(\gamma^{-1}x,-\theta x+\gamma y). \label{eq:-42-H}
\end{equation}
Notice that $H$ has the block triangular structure described in \Cref{lem:separable}. By using this structure as in~\eqref{eq:resolventform} and substituting~\eqref{eq:-42-all},~\eqref{eq:-42-H} in \Cref{Algorithm-1}, after some algebraic manipulations involving Moreau's identity as well as a change of variables $s_{n}\coloneqq x_{n}-\gamma y_{n}$ (see proof of \Cref{prop:-1} for details), we derive the following algorithm:
\begin{algorithm}[H]
    \caption{Douglas-Rachford Type with a Forward Term}
	\label{Algorithm-6}
    \begin{algorithmic} 
            \item\textbf{Inputs:} $x_0\in\HH$, $s_0\in\HH$
            \For{$n=0,\ldots$} 
                \State $\bar{x}_{n} =J_{\gamma D}(s_{n}-\gamma Fx_{n})$
                \State $r_{n}=J_{\gamma E}(\theta \bar{x}_{n}+(2-\theta)x_{n}-s_{n})$
                \State $s_{n+1} =s_{n}+\rho_n(r_{n}-\bar{x}_{n})$
                \State $x_{n+1}=x_n+\rho_n(\bar{x}_{n}-x_n)$
            \EndFor
    \end{algorithmic}
\end{algorithm}
In the special case when $\rho_n=1$, the last line in \Cref{Algorithm-6} becomes obsolete and $\bar{x}_n$ can be replaced with $x_{n+1}$. The next proposition provides the convergence properties for \Cref{Algorithm-6}.
\begin{prop} \label{prop:-1}
Consider the sequences $(x_{n})_{n\in\Nn}$ and $(s_{n})_{n\in\Nn}$ generated
by \Cref{Algorithm-6}. Let $\eta\in]0,+\infty[$ be the
cocoercivity constant of $F$. Suppose that one of the following holds:
\begin{enumerate}[{label=(\textit{\roman*})},ref=\textit{\roman{*}}]
\item \label{prop:-1-part1} $\theta\in[0,2[$, $\gamma\in]0,\eta(4-\theta^2)[$ and the sequence of relaxation parameters $(\rho_{n})_{n\in\Nn}$  is uniformly bounded in the interval
\begin{equation}\label{eq:rho}
(\rho_n)_{n\in\Nn}\subseteq\left]0, \frac{4-\theta^2-\gamma/\eta}{(2-\theta)(2+\sqrt{2-\theta})}\right[. 
\end{equation}
\item \label{prop:-1-part2} $F\equiv0$, $\theta\in[0,2[$,  $\gamma\in]0,\infty[$, and sequence $(\rho_{n})_{n\in\Nn}$ uniformly bounded in the interval
\begin{equation*}
(\rho_n)_{n\in\Nn}\subseteq\left]0,2-\sqrt{2-\theta}\right[.
\end{equation*}
\item \label{prop:-1-part3} $F\equiv0$, $\theta=2$, $\gamma\in]0,\infty[$,  $(\rho_{n})_{n\in\Nn}$ uniformly bounded in the interval $]0,2[$, and $\mathcal{K}$ is finite-dimensional.
\end{enumerate}
Then, there exists a pair of solutions $(x^{\star},y^{\star})\in\mathcal{K}$
to~\eqref{eq:DR primal-dual} such that the sequences 
$(x_{n})_{n\in\Nn}$ and $(s_{n})_{n\in\Nn}$ converge weakly to $x^{\star}$ and $x^{\star}-\gamma y^{\star}$, respectively. 
\end{prop}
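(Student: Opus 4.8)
The plan is to recognize \Cref{Algorithm-6} as \Cref{Algorithm-1} applied to the data $A,M,C$ of~\eqref{eq:Aopt1}--\eqref{eq:Copt1} and $P,K,S$ of~\eqref{eq:-42-all} on $\mathcal K=\HH\oplus\HH$, after the change of variables $s_n=x_n-\gamma y_n$, and then to invoke \Cref{thm:Suppose-that-} for cases~\eqref{prop:-1-part1}--\eqref{prop:-1-part2} and \Cref{thm:Suppose-that--1} for case~\eqref{prop:-1-part3}. First I would record the standing facts: $A$ is maximally monotone and $M$ (being skew-adjoint) is monotone by the cited results; for $\theta\in[0,2[$ the operators $P$ and $S$ are strongly positive by \Cref{lem:strongly positive p-1,lem:strongly}, $K$ is skew-adjoint, and $H=P+K$ has the block-lower-triangular form~\eqref{eq:-42-H} with strongly positive diagonal blocks $\gamma^{-1}\id$, $\gamma\id$. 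Since $\|(u,0)\|_{P^{-1}}^2=\tfrac{4\gamma}{4-\theta^2}\|u\|^2$ and $F$ is $\eta$-cocoercive in the canonical norm, $C$ is $\beta$-cocoercive with respect to $\|\cdot\|_P$ with $\beta=\tfrac{\eta(4-\theta^2)}{4\gamma}$; hence \Cref{assumption-1}\eqref{enu:Let--and} (that is, $\beta>1/4$) holds iff $\gamma<\eta(4-\theta^2)$, in which case $\delta=2-\tfrac1{2\beta}=\tfrac{2(4-\theta^2-\gamma/\eta)}{4-\theta^2}$, while $\delta=2$ when $F\equiv0$.

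Next I would carry out the reduction. Applying the triangular resolvent formula of \Cref{lem:separable} to $\bar z_n=(H+A)^{-1}(H-M-C)z_n$, the first block yields $\bar x_n=J_{\gamma D}(s_n-\gamma Fx_n)$; for the second block, $(\gamma\id+E^{-1})^{-1}$ is rewritten through Moreau's identity in terms of $J_{\gamma E}$, producing $r_n=J_{\gamma E}(\theta\bar x_n+(2-\theta)x_n-s_n)$ together with $\gamma\bar y_n=\theta\bar x_n+(2-\theta)x_n-s_n-r_n$. A direct computation gives $S^{-1}(H+M^*)=\smallmat{\id & 0\\(2-\theta)\gamma^{-1}\id & \id}$, so \hyperlink{alg:line4}{Step~4} of \Cref{Algorithm-1} becomes $x_{n+1}=x_n+\alpha_n(\bar x_n-x_n)$ and $y_{n+1}=y_n+\alpha_n\big((2-\theta)\gamma^{-1}(\bar x_n-x_n)+(\bar y_n-y_n)\big)$; substituting these and simplifying $s_{n+1}=x_{n+1}-\gamma y_{n+1}$ recovers \Cref{Algorithm-6} with $\rho_n=\alpha_n$.

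It then remains to translate the step-size condition. For $D$ as in~\eqref{eq:D} one computes, relative to $P$, the sharp bounds $c_1P\preceq D\preceq c_2P$ with $c_{1,2}=\tfrac{2(2\mp\sqrt{2-\theta})}{2+\theta}$ (the roots of $(2+\theta)\mu^2-8\mu+4=0$). Since $\rho_n=\alpha_n$ and $\lambda_n=\alpha_n\|\tilde z_n\|_P^2/\langle\tilde z_n,D\tilde z_n\rangle$, we get $c_1\rho_n\le\lambda_n\le c_2\rho_n$; hence if $(\rho_n)_{n\in\Nn}$ is uniformly bounded in $]0,\delta/c_2[$ then $(\lambda_n)_{n\in\Nn}$ is uniformly bounded in $]0,\delta[$ with $\liminf_n\lambda_n(\delta-\lambda_n)>0$, so~\eqref{eq:-7} is satisfied. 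Finally $\delta/c_2=\tfrac{4-\theta^2-\gamma/\eta}{(2-\theta)(2+\sqrt{2-\theta})}$ in case~\eqref{prop:-1-part1} and $\delta/c_2=\tfrac{2+\theta}{2+\sqrt{2-\theta}}=2-\sqrt{2-\theta}$ in case~\eqref{prop:-1-part2}, which are exactly the stated intervals for $(\rho_n)_{n\in\Nn}$. Since $\zer(A+M+C)\neq\emptyset$ (the standing nonemptiness assumption, as $\zer(A+M+C)$ is the set of primal--dual solutions of~\eqref{eq:DR primal-dual}), \Cref{thm:Suppose-that-} gives $z_n=(x_n,y_n)\rightharpoonup z^\star=(x^\star,y^\star)\in\zer(A+M+C)$, i.e.\ $(x^\star,y^\star)$ solves~\eqref{eq:DR primal-dual}; weak continuity of $(x,y)\mapsto x-\gamma y$ then yields $s_n\rightharpoonup x^\star-\gamma y^\star$.

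For case~\eqref{prop:-1-part3}, $\theta=2$ makes $P$ (and $S$) merely positive, so \Cref{thm:Suppose-that-} no longer applies; here $F\equiv0$, $K=M$, and one checks $S=P$ and $H+M^*=P$, so \hyperlink{alg:line4}{Step~4} reads $z_{n+1}=z_n+\lambda_n(\bar z_n-z_n)$ with $\bar z_n=(H+A)^{-1}Pz_n$ and $\rho_n=\lambda_n$, i.e.\ exactly the iteration~\eqref{Algorithm-Pi}--\eqref{Algorithm-Pii} with $M$ skew-adjoint. Since the diagonal blocks of $H$ are strongly positive, \Cref{lem:separable} ensures $(H+A)^{-1}P$ is continuous, so on the finite-dimensional $\mathcal K$ \Cref{thm:Suppose-that--1} gives $z_n\to z^\star\in\zer(A+M)$, and the conclusion follows as above. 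The main obstacle is this reduction together with the accompanying spectral bookkeeping: one has to run the resolvent/Moreau manipulation carefully enough to land on the precise form of \Cref{Algorithm-6}, and---above all---pin down the exact constants $c_1,c_2$ relating $D$ and $P$, since these fix the exact endpoints of the admissible relaxation intervals in~\eqref{eq:rho} and item~\eqref{prop:-1-part2}. Everything else is a direct application of \Cref{thm:Suppose-that-,thm:Suppose-that--1}.
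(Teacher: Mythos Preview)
Your proposal is correct and follows the same overall strategy as the paper: identify \Cref{Algorithm-6} with \Cref{Algorithm-1} through the change of variables $s_n=x_n-\gamma y_n$, fix $\lambda_n$ so that $\alpha_n=\rho_n$, and then invoke \Cref{thm:Suppose-that-} (parts~\eqref{prop:-1-part1}--\eqref{prop:-1-part2}) or \Cref{thm:Suppose-that--1} (part~\eqref{prop:-1-part3}). Your reduction computations (the resolvent via \Cref{lem:separable}, Moreau's identity, $S^{-1}(H+M^*)=\smallmat{\id&0\\(2-\theta)\gamma^{-1}\id&\id}$, the cocoercivity constant $\beta=\eta(4-\theta^2)/(4\gamma)$) all check out.

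The one place where you diverge from the paper is in how you bound $\lambda_n$. The paper writes out the ratio $\|\tilde z_n\|_D^2/\|\tilde z_n\|_P^2$ explicitly and shows $\lambda_n<\delta-\nu_2$ by a Fenchel--Young argument on the resulting quadratic form (their displayed inequalities~\eqref{eq:-544}--\eqref{eq:-42-nn}). You instead compute the sharp generalized eigenvalues of $D$ relative to $P$ as the roots of $(2+\theta)\mu^2-8\mu+4=0$, obtaining $c_1P\preceq D\preceq c_2P$ with $c_{1,2}=2(2\mp\sqrt{2-\theta})/(2+\theta)$, and conclude $c_1\rho_n\le\lambda_n\le c_2\rho_n$. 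This is a cleaner route to exactly the same endpoints for $(\rho_n)$, and it makes transparent why the constants in~\eqref{eq:rho} and item~\eqref{prop:-1-part2} appear.

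One slip: you wrote $\lambda_n=\alpha_n\|\tilde z_n\|_P^2/\langle\tilde z_n,D\tilde z_n\rangle$, but from \hyperlink{alg:line3}{Step~3} of \Cref{Algorithm-1} it is the reciprocal, $\lambda_n=\alpha_n\|\tilde z_n\|_D^2/\|\tilde z_n\|_P^2$. Your subsequent bound $c_1\rho_n\le\lambda_n\le c_2\rho_n$ is the one that follows from the \emph{correct} formula, so this is evidently just a typo and does not affect the argument.
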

\begin{proof}
See \proofRef{proof of prop:-1}.
\end{proof}

Next proposition provides convergence rate results for~\Cref{Algorithm-6} when $\theta=2$, based on~\Cref{thm:Suppose-that--1}. Similarly, for the case when $\theta\in[0,2[$, convergence rates can be deduced based on \Cref{Thm: conv-rates}. However, we omit it in this work. Furthermore, when {metric subregulariy} assumption in \Cref{Thm: conv-rates-ii} holds, linear convergence follows without any additional assumptions.  
\begin{prop}[Convergence rate] \label{porp:-1-conv}
	Let $\mathcal{K}$ be finite-dimensional. Consider the sequences $(x_{n})_{n\in\Nn}$ and $(s_{n})_{n\in\Nn}$ generated by \Cref{Algorithm-6}. Let $F\equiv0$, $\theta=2$, $\gamma\in]0,\infty[$, and $(\rho_{n})_{n\in\Nn}$ be  uniformly bounded in the interval $]0,2[$. Then
\[
\|s_{n+1}-s_n\|^2\leq\tfrac{\gamma}{\tau(n+1)}\|Qz_{0}-Qz^{\star}\|_{R}^{2},
\] 
and $\|s_{n+1}-s_n\|^2=o(1/(n+1))$ for some constant $\tau>0$, 
where $Q$ is the orthogonal projection onto $\ran(P)$, $R=P+\id-Q$, and $z_n= \left(x_n,\gamma^{-1}(x_n-s_n)\right)$. 
\end{prop}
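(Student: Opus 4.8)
The strategy is to reduce Proposition~\ref{porp:-1-conv} to the convergence-rate estimate already established in Theorem~\ref{thm:Suppose-that--1}\eqref{thm-P-part4}, specialised to the concrete operators $A$, $M$, $P$, $S$ of~\eqref{eq:Aopt1}--\eqref{eq:Mopt1} and~\eqref{eq:-42-all} with $F\equiv0$, $\theta=2$, $L=\id$, $\gamma_1=\gamma_2^{-1}=\gamma$. First I would record that in this regime $C\equiv0$ (since $F\equiv0$), $M$ is skew-adjoint, $K=M$ (compare~\eqref{eq:-42-k} with $M$ restricted to $\theta=2$), and $S=P$; hence Algorithm~\ref{Algorithm-1} collapses to the iteration~\eqref{Algorithm-Pi}--\eqref{Algorithm-Pii}, which is exactly the setting of Theorem~\ref{thm:Suppose-that--1}. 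The hypotheses of that theorem must be checked: $\HH=\mathcal K$ is finite-dimensional by assumption, $P\in\mathcal S(\mathcal K)$ with $P\succeq0$ holds at $\theta=2$ (it is no longer strongly positive, which is precisely why the $\theta=2$ case is singled out), $\zer(T)=\zer(A+M)\neq\textrm{\O}$ follows from the standing assumption that a primal-dual solution exists, continuity of $(H+A)^{-1}P$ follows from the block-triangular structure~\eqref{eq:-42-H} together with Lemma~\ref{lem:separable} (the diagonal blocks $\gamma^{-1}\id$ and $\gamma\id$ are strongly positive), and $(\rho_n)_{n\in\Nn}$ uniformly bounded in $]0,2[$ is assumed. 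Thus Theorem~\ref{thm:Suppose-that--1}\eqref{thm-P-part4} applies and yields
\[
\|Pz_{n+1}-Pz_n\|^2\leq\tfrac{\|P\|}{\tau(n+1)}\|Qz_0-Qz^\star\|_R^2,\qquad \|Pz_{n+1}-Pz_n\|^2=o(1/(n+1)),
\]
with $z_n=(x_n,\gamma^{-1}(x_n-s_n))$, which is the claimed identification of the $z_n$ variable once one traces the change of variables $s_n=x_n-\gamma y_n$ used to pass from Algorithm~\ref{Algorithm-1} to Algorithm~\ref{Algorithm-6}.

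**The remaining computation.** What is left is to translate $\|Pz_{n+1}-Pz_n\|^2$ into $\|s_{n+1}-s_n\|^2$ and to pin down the constant $\gamma$ in front. With $z_n=(x_n,y_n)$ and $y_n=\gamma^{-1}(x_n-s_n)$, one computes from~\eqref{eq:-42-p} at $\theta=2$ that $Pz=(\gamma^{-1}x-y,\ -x+\gamma y)$, so
\[
P(z_{n+1}-z_n)=\bigl(\gamma^{-1}(x_{n+1}-x_n)-(y_{n+1}-y_n),\ -(x_{n+1}-x_n)+\gamma(y_{n+1}-y_n)\bigr).
\]
Substituting $y_{n+1}-y_n=\gamma^{-1}\bigl((x_{n+1}-x_n)-(s_{n+1}-s_n)\bigr)$, the first component becomes $\gamma^{-1}(s_{n+1}-s_n)$ and the second becomes $-(s_{n+1}-s_n)$, whence $\|P(z_{n+1}-z_n)\|^2=(\gamma^{-2}+1)\|s_{n+1}-s_n\|^2$. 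Meanwhile $\|P\|$ at $\theta=2$ is the operator norm of the $2\times2$-block matrix $\smallmat{\gamma^{-1}\id & -\id\\ -\id & \gamma\id}$, which equals $\gamma^{-1}+\gamma$ (its nonzero eigenvalue). Dividing, $\|P\|/(\gamma^{-2}+1)=\gamma(\gamma^{-1}+\gamma)/(1+\gamma^2)\cdot\gamma^{-1}\cdot\gamma=\gamma$, so the factor $\gamma$ in the statement emerges after absorbing everything into a single constant $\tau>0$ (one may simply rename the product of $\tau$ from Theorem~\ref{thm:Suppose-that--1} with the $\gamma^{-2}+1$ factor). The little-$o$ assertion transfers verbatim under the same linear identity.

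**Main obstacle.** The proof is essentially bookkeeping: the only genuinely delicate point is verifying that the hypothesis ``$(H+A)^{-1}P$ is continuous'' of Theorem~\ref{thm:Suppose-that--1} really holds here, because $P$ is only positive (not strongly positive) so the resolvent is not automatically single-valued from strong monotonicity alone. This is exactly where Lemma~\ref{lem:separable} does the work: $H$ in~\eqref{eq:-42-H} is block lower triangular with diagonal blocks $\gamma^{-1}\id\in\mathcal S_{\gamma^{-1}}(\HH)$ and $\gamma\id\in\mathcal S_\gamma(\HH)$, and $A=(D,E^{-1})$ is block separable with $D$, $E^{-1}$ maximally monotone, so Lemma~\ref{lem:separable} gives that $(H+A)^{-1}$ is single-valued, full-domain, and continuous; composing with the bounded operator $P$ preserves continuity. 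A secondary point worth stating cleanly is that the change of variables $s_n=x_n-\gamma y_n$ is invertible and affine, so Féjer-type/rate statements for $z_n$ transfer to $s_n$ without loss; I would cite the derivation in the proof of Proposition~\ref{prop:-1} for the explicit correspondence between Algorithm~\ref{Algorithm-1} and Algorithm~\ref{Algorithm-6} rather than redo it.
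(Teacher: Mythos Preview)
Your proposal is correct and follows essentially the same approach as the paper: invoke \Cref{thm:Suppose-that--1}\eqref{thm-P-part4} via the reduction carried out in the proof of \Cref{prop:-1}\eqref{prop:-1-part3}, then compute $\|P(z_{n+1}-z_n)\|^2=(1+\gamma^{-2})\|s_{n+1}-s_n\|^2$ and $\|P\|=\gamma+\gamma^{-1}$ to extract the factor $\gamma$. Your hypothesis-checking (that $K=M$, $S=P$ at $\theta=2$, and that continuity of $(H+A)^{-1}P$ comes from \Cref{lem:separable}) is more explicit than the paper's, which simply cites the proof of \Cref{prop:-1}\eqref{prop:-1-part3}; the paper obtains $\|P\|\le\gamma+\gamma^{-1}$ via the identity $P^2=(\gamma+\gamma^{-1})P$ rather than by eigenvalue computation, but both routes are equivalent. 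One cosmetic point: your displayed arithmetic ``$\|P\|/(\gamma^{-2}+1)=\gamma(\gamma^{-1}+\gamma)/(1+\gamma^2)\cdot\gamma^{-1}\cdot\gamma=\gamma$'' is garbled in the middle (the intermediate expression does not parse), and since $(\gamma+\gamma^{-1})/(1+\gamma^{-2})=\gamma$ exactly, there is no need to ``rename'' $\tau$ as you suggest---the constant comes out on the nose.
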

\begin{proof}
See \proofRef{proof of prop:-1-conv}.
\end{proof}
\begin{rem} \label{rem:DR}
Recently, another three-operator splitting algorithm was  proposed in \cite{davis2015three} which can also be seen as a generalization of Douglas-Rachford method to accommodates a third cocoercive operator. In the aforementioned paper, the forward step takes place after the first backward update, while in \Cref{Algorithm-6} it precedes the backward update. Whether this is better in practice or not is yet to be seen.

Furthermore, the parameter range prescribed in \cite[Theorem 3.1]{davis2015three} is simply $\gamma\in]0,2\eta[$ and $(\rho_n)_{n\in\Nn}\subseteq]0,2-\frac{\gamma}{2\eta}[$, while for \Cref{Algorithm-6} it consists of $\gamma\in]0,\eta(4-\theta^2)[$ with relaxation parameter uniformly bounded in the interval~\eqref{eq:rho}. For $\theta\in[0,\sqrt{2}[$, \Cref{Algorithm-6} can have larger step size but it is important to notice that this might not necessarily be advantageous in practice because the upper bound for the relaxation parameter in \eqref{eq:rho} decreases as we reduce $\theta$. For example if we fix $\rho_n=1$, conditions of \Cref{prop:-1} become $\theta\in]1,2[$ and $\gamma/\eta\in]0,(2-\theta)(\theta-\sqrt{2-\theta})[$. This step size is always smaller that the one of \cite{davis2015three}. However, if the relaxation parameter $\rho_n$ is selected to be small enough then  $\gamma$ can take values larger than the one allowed in \cite{davis2015three}.
\end{rem}
 
  \begin{rem}
  	In \Cref{Algorithm-6} the case $\theta=2$, $\rho_n=1$ with $F\equiv0$ (see \Cref{prop:-1}\eqref{prop:-1-part3}) yields the classical DRS \cite{lions1979splitting}. This choice of $P$ is precisely the one considered in \cite[Section 3.1.1]{condat2013primal}. 
  \end{rem}

\section{Structured Nonsmooth Convex Optimization} \label{sec:Saddle-point-problem}
One of the characteristics of Asymmetric Forward-Backward-Adjoint splitting (AFBA) introduced in \Cref{sec:3}, is availability of the parameters $P,K$ and $S$ which are independent of each other. In the general form, $P$ is a strongly positive operator to be chosen but it directly effects the convergence through the cocoercivity constant in \Cref{assumption-1}\eqref{enu:Let--and}. $S$ and $K$ are arbitrary strongly positive and skew-adjoint operators, respectively. This introduces a lot of flexibility which proves essential in the development of this section. We will see that by properly choosing these parameters we can recover and generalize several well known schemes proposed in the recent years. Specifically, we will recover the algorithms proposed by  V\~{u} and Condat \cite{vu2013splitting,condat2013primal}, Brice{\~n}o-Arias and Combettes \cite{briceno2011monotone+} and the one of Drori and Sabach  and Teboulle \cite{drori2015simple}. These algorithms belong to the class of so called primal-dual algorithms and owe their popularity to their simplicity and special structure. They have been used to solve a wide range of problems arising in image processing, machine learning and control, see for example \cite{condat2014generic,chambolle2011first,boct2014recent}. Recently, randomized versions have also been proposed for distributed optimization (refer to \cite{pesquet2014class,bianchi2015coordinate,combettes2014stochastic}). As the last contribution of the paper we will present a generalization of the classic ADMM to three blocks. \Cref{fig1} summarizes the connections between these algorithms in the form of a diagram. 
\begin{figure}
\raggedleft
\begin{minipage}{0.9\columnwidth}
\def\svgwidth{15cm}
\resizebox{\columnwidth}{!}{\trimbox{1.8cm 8.8cm 0 3.6cm}{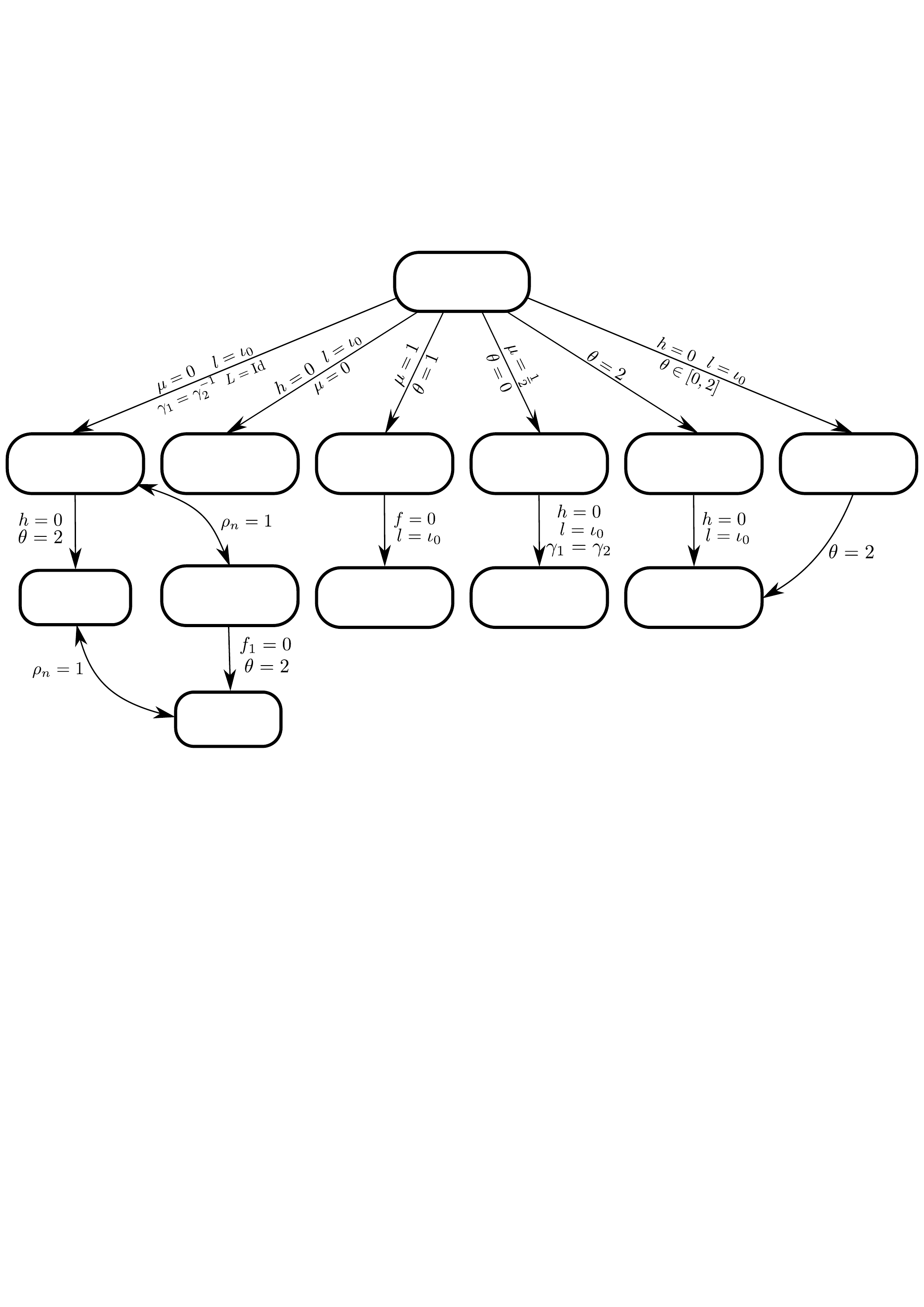 }}
\end{minipage}
\caption{\Cref{Algorithm-2} and its special cases}\label{fig1}
\end{figure}
 We start with the following convex optimization problem
\begin{align}
\underset{x\in\HH}{\minimize}\ f(x)+h(x)+(g\oblong l)(Lx), \label{eq:-21-1}
\end{align}
and its dual 
\begin{align}
\underset{y\in\mathcal{G}}{\minimize}\ (f+h)^{*}(-L^*y)+(g\oblong l)^*(y), \label{eq:-21-1:Dual}
\end{align}
 where $\mathcal{H}$ and $\mathcal{G}$ are real Hilbert spaces, $f\in\Gamma_{0}(\HH)$, $g\in\Gamma_{0}(\mathcal{G})$,
$L\in\mathscr{B}(\HH,\mathcal{G})$ and $h:\HH\rightarrow\mathbb{R}$
is convex, differentiable on $\HH$ with $\beta_{h}$-Lipschitz gradient
for some $\beta_{h}\in]0,+\infty[$. Furthermore, $l\in\Gamma_{0}(\mathcal{G})$
is $\beta_{l}^{-1}$-strongly convex for some $\beta_{l}\in]0,+\infty[$ or equivalently $\nabla l^{*}$ is $\beta_{l}$-Lipschitz. 
 The infimal convolution $g\oblong l$  can be seen as a regularization of $g$ by $l$ and when $l=\iota_{\{0\}}$, $(g\oblong l)$ simply becomes $g$ and $\nabla l^{*}=0$. In our analysis we will always consider the special cases $l=\iota_{\{0\}}$ and $h\equiv0$ separately because they result in less conservative conditions.    

Throughout this section we assume that there exists $x^{\star}$ such that 
\begin{equation} \label{eqn:existance}
x^{\star}\in \zer \left(  \partial f + \nabla h + L^*(\partial g \oblong \partial l)L    \right).
\end{equation}
The interested reader is referred to \cite[Proposition 4.3]{combettes2012primal} for conditions on existence of such $x^{\star}$. We consider the saddle point problem corresponding to \eqref{eq:-21-1}. This allows us to exploit~\Cref{Algorithm-1}, and develop a unifying algorithm. We do all this in the context of optimization, with the understanding that it is straightforward to adapt the same analysis for the corresponding monotone inclusion problem. The saddle point problem is 
\begin{equation*}
\underset{x\in\HH}{\minimize}\ \underset{y\in\mathcal{G}}{\maximize}\ f(x)+h(x)+\langle Lx,y\rangle-(g\oblong l)^{*}(y). \label{eq:saddle-point}
\end{equation*}
The optimality conditions are
\begin{equation} \label{eq:primal-dual}
\begin{cases}
0\in\partial f(x)+\nabla h(x)+L^{*}y,\\
0\in\partial g^{*}(y)+\nabla l{}^{*}(y)-Lx.
\end{cases} 
\end{equation}
It follows from~\eqref{eqn:existance} that the set of solutions to \eqref{eq:primal-dual} is nonempty. We say that $(x^{\star},y^{\star})$ is a primal-dual solution if it satisfies~\eqref{eq:primal-dual}. 
Furthermore, if $(x^{\star},y^{\star})$ is a solution pair to \eqref{eq:primal-dual}, then $x^{\star}$ is a solution for the primal problem \eqref{eq:-21-1} and $y^{\star}$ is a solution to the dual problem~\eqref{eq:-21-1:Dual}. For further discussion on duality see \cite{combettes2012primal,davis2015convergence,condat2013primal} and the references therein. 
Let $\mathcal{K}$ be the Hilbert direct sum  
$\mathcal{K}=\HH\oplus\mathcal{G}$ and
define the operators 
\begin{subequations}
\begin{align}
A:\mathcal{K}\to2^{\mathcal{K}}&:(x,y)\mapsto(\partial f(x),\partial g^{*}(y)),\label{eq:Aopt}\\
M\in\mathscr{B}(\mathcal{K})&:(x,y)\mapsto(L^{*}y,-Lx),\label{eq:Mopt}\\
C:\mathcal{K}\to\mathcal{K}&:(x,y)\mapsto(\nabla h(x),\nabla l{}^{*}(y)).\label{eq:Copt}
\end{align}
\end{subequations}
The operator $A$ is maximally monotone \cite[Theorem 21.2 and Proposition 20.23]{bauschke2011convex}, and the operator $C$ is cocoercive (see \Cref{lem: cocoercivity lemma}). The monotone inclusion problem \eqref{eq:primal-dual} can be written in the form of \eqref{eq:}: 
\[
0\in Az+Mz+Cz,
\]
where $z=(x,y)$. Let $\theta\in [0,+\infty[$ and set 
\begin{align}
P\in\mathscr{B}(\mathcal{K}):(x,y)&\mapsto\left(\gamma_{1}^{-1}x-\tfrac{1}{2}\theta L^{*}y,-\tfrac{1}{2}\theta Lx+\gamma_{2}^{-1}y\right),\label{eq:-17-1}\\
K\in\mathscr{B}(\mathcal{K}):(x,y)&\mapsto\left(\tfrac{1}{2}\theta L^{*}y,-\tfrac{1}{2}\theta Lx\right).\nonumber
\end{align}
Then $H=P+K$ yields
\begin{equation}
H\in\mathscr{B}(\mathcal{K}):(x,y)\mapsto\left(\gamma_{1}^{-1}x,-\theta Lx+\gamma_{2}^{-1}y\right).\label{eq:-42-42-42}
\end{equation}
\begin{rem}
We make the above choices for clarity of exposition. It is straightforward to adopt the same analysis when $\gamma_{1}$ and $\gamma_{2}$ are replaced by general strongly positive operators $\Gamma_1$ and $\Gamma_2$. 
\end{rem}
Operator $H$ defined in \eqref{eq:-42-42-42} has the block triangular structure described in~\Cref{lem:separable}. Therefore, in view of~\eqref{eq:resolventform}, $\bar{z}_{n}=(H+A)^{-1}(H-M-C)z_{n}$ becomes
\begin{subequations}\label{eq:bar_prox}
\begin{align}
\bar{x}_{n}&=\prox_{\gamma_1 f}(x_{n}-\gamma_{1}L^{*}y_{n}-\gamma_{1}\nabla h(x_{n})),\\
\bar{y}_{n}&=\prox_{\gamma_2g^*}(y_{n}+\gamma_{2}L((1-\theta) x_{n}+\theta\bar{x}_{n})-\gamma_{2}\nabla l^{*}(y_{n})).
\end{align}
\end{subequations}
	The following two lemmas will play an important role in development of this section. \Cref{lem:strongly positive p-1} provides a tight estimate for the strong positivity parameter of $P$, while in \Cref{lem: cocoercivity lemma} we develop estimates for the cocoercivity parameter of $C$ given by~\eqref{eq:Copt}. 
\begin{lem}
\label{lem:strongly positive p-1}Consider $P$ defined by \eqref{eq:-17-1}. 
Let $\gamma_{1},\gamma_{2}>0$ and 
\begin{equation}
\gamma_{1}^{-1}-\frac{\gamma_{2}}{4}\theta^{2}\|L\|^{2}>0.\label{eq:-41}
\end{equation}
Then $P\in\mathcal{S}_\tau(\KK)$ where
\begin{equation}
\tau=\tfrac{1}{2}\gamma_{1}^{-1}+\tfrac{1}{2}\gamma_{2}^{-1}-\tfrac{1}{2}\sqrt{\theta^{2}\|L\|^{2}+(\gamma_{1}^{-1}-\gamma_{2}^{-1})^{2}}.\label{eq:-42-42}
\end{equation}
 \end{lem}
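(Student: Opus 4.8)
The statement asserts three things about $P$ from~\eqref{eq:-17-1}: that it is self-adjoint, that $P\succeq\tau\id$ with the specific $\tau$ of~\eqref{eq:-42-42}, and that this $\tau$ is strictly positive. The plan is to compute the quadratic form $\langle z,Pz\rangle$ explicitly and reduce everything to a two-by-two symmetric eigenvalue computation. First I would check self-adjointness: pairing $P(x,y)$ with $(x',y')$ and using $\langle L^{*}y,x'\rangle=\langle y,Lx'\rangle$ and $\langle Lx,y'\rangle=\langle x,L^{*}y'\rangle$ shows $\langle P(x,y),(x',y')\rangle=\langle(x,y),P(x',y')\rangle$, so $P\in\mathcal{S}(\KK)$. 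Next, for $z=(x,y)\in\KK$ one gets
\[
\langle z,Pz\rangle=\gamma_{1}^{-1}\|x\|^{2}+\gamma_{2}^{-1}\|y\|^{2}-\theta\langle Lx,y\rangle,
\]
and since $\theta\geq0$ and $\langle Lx,y\rangle\leq\|L\|\,\|x\|\,\|y\|$ by Cauchy--Schwarz, this is bounded below by $\gamma_{1}^{-1}\|x\|^{2}+\gamma_{2}^{-1}\|y\|^{2}-\theta\|L\|\,\|x\|\,\|y\|$.

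Then I would read the right-hand side as the quadratic form $\langle\xi,N\xi\rangle$ in the variable $\xi=(\|x\|,\|y\|)\in\Re^{2}$ with
\[
N=\begin{bmatrix}\gamma_{1}^{-1} & -\tfrac12\theta\|L\|\\[2pt] -\tfrac12\theta\|L\| & \gamma_{2}^{-1}\end{bmatrix}.
\]
The eigenvalues of this symmetric matrix are $\tfrac12(\gamma_{1}^{-1}+\gamma_{2}^{-1})\pm\tfrac12\sqrt{(\gamma_{1}^{-1}-\gamma_{2}^{-1})^{2}+\theta^{2}\|L\|^{2}}$, so its smallest eigenvalue is exactly the $\tau$ of~\eqref{eq:-42-42}. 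Hence $N-\tau I_{2}\succeq0$, which gives $\langle\xi,N\xi\rangle\geq\tau\|\xi\|^{2}$ for every $\xi\in\Re^{2}$, and therefore
\[
\langle z,Pz\rangle\geq\tau\bigl(\|x\|^{2}+\|y\|^{2}\bigr)=\tau\|z\|^{2}.
\]

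It remains to verify $\tau>0$, which is where hypothesis~\eqref{eq:-41} is used: multiplying~\eqref{eq:-41} by $\gamma_{2}^{-1}>0$ yields $\det N=\gamma_{1}^{-1}\gamma_{2}^{-1}-\tfrac14\theta^{2}\|L\|^{2}>0$, and together with $\operatorname{tr}N=\gamma_{1}^{-1}+\gamma_{2}^{-1}>0$ this forces both eigenvalues of $N$ to be strictly positive, in particular $\tau=\lambda_{\min}(N)>0$. Combining with the displayed bound gives $P\succeq\tau\id$ with $\tau>0$, i.e.\ $P\in\mathcal{S}_{\tau}(\KK)$. There is no genuine obstacle here; the one point worth a sentence of care is that the Cauchy--Schwarz reduction only a priori controls $\langle\xi,N\xi\rangle$ for $\xi$ with nonnegative entries, but since $N-\tau I_{2}$ is a bona fide positive semidefinite matrix the inequality in fact holds for all $\xi\in\Re^{2}$, so no information is lost.
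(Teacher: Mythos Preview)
Your proof is correct and reaches the same conclusion as the paper's, but via a slightly different route. The paper computes the same quadratic form $\langle z,Pz\rangle=\gamma_{1}^{-1}\|x\|^{2}+\gamma_{2}^{-1}\|y\|^{2}-\theta\langle Lx,y\rangle$, then applies the Fenchel--Young inequality with a free parameter $\epsilon>0$ to obtain
\[
\langle z,Pz\rangle\geq\bigl(\gamma_{1}^{-1}-\tfrac{\theta\|L\|^{2}}{2\epsilon}\bigr)\|x\|^{2}+\bigl(\gamma_{2}^{-1}-\tfrac{\theta\epsilon}{2}\bigr)\|y\|^{2},
\]
and optimizes over $\epsilon$ to equalize the two coefficients, arriving at~\eqref{eq:-42-42}. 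You instead apply Cauchy--Schwarz once and recognize the resulting expression as the quadratic form of a $2\times2$ symmetric matrix $N$, whose minimum eigenvalue is exactly $\tau$. The two arguments are equivalent (optimizing the Young parameter is precisely computing $\lambda_{\min}(N)$), but your eigenvalue formulation is a bit cleaner: it avoids having to guess and verify the optimal $\epsilon$, it treats the case $\theta\|L\|=0$ uniformly rather than separately, and the positivity of $\tau$ drops out transparently from $\det N>0$ and $\operatorname{tr}N>0$. The paper's approach, in turn, keeps the argument in line with the Fenchel--Young technique used repeatedly elsewhere in the analysis.
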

\begin{proof}
Let $z=(x,y)$. First consider the case when $\|L\|>0$ and $\theta\in]0,\infty[$. We have
\begin{align}
	\langle z,Pz\rangle 
	& =\langle\gamma_{1}^{-1}x-\tfrac{1}{2}\theta L^{*}y,x\rangle+\langle-\tfrac{1}{2}\theta Lx+\gamma_{2}^{-1}y,y\rangle\nonumber \\
	& =\gamma_{1}^{-1}\|x\|^{2}+\gamma_{2}^{-1}\|y\|^{2}-\theta\langle Lx,y\rangle\nonumber \\
	& \geq(\gamma_{1}^{-1}-\tfrac{\theta\|L\|^{2}}{2\epsilon})\|x\|^{2}+(\gamma_{2}^{-1}-\tfrac{\theta\epsilon}{2})\|y\|^{2},\label{eq:-13}
\end{align}
where we used the Fenchel-Young inequality for $\frac{\epsilon}{2}\|\cdot\|^2$. Select 
$\epsilon=-\tfrac{1}{\theta}(\gamma_{1}^{-1}-\gamma_{2}^{-1})+\sqrt{\|L\|^{2}+\tfrac{1}{\theta^{2}}(\gamma_{1}^{-1}-\gamma_{2}^{-1})^{2}}$,
to maximize the strong positivity parameter. It is
easy to verify that $\epsilon>0$. Substitute $\epsilon$ in \eqref{eq:-13} to obtain
$\langle z,Pz\rangle\geq\tau\|z\|^{2}$, 
where $\tau$ is given by~\eqref{eq:-42-42}
and is positive as long as \eqref{eq:-41} is satisfied. The case when either $\|L\|=0$ or $\theta=0$ results in $\tau$-strongly positive $P$ with $\tau=\min \{\gamma_1^{-1},\gamma_2^{-1}\}$, which is also captured by~\eqref{eq:-42-42}.  
\end{proof}
\begin{lem} \label{lem: cocoercivity lemma}
Let $\beta_{h}\in]0,+\infty[$ and $\beta_{l}\in]0,+\infty[$ be the
Lipschitz constants of $\nabla h$ and $\nabla l^{*}$, respectively. Let $P$ be defined by \eqref{eq:-17-1} and assume that $\gamma_1,\gamma_2>0$ are such that~\eqref{eq:-41} is satisfied.
 Then $C$ given by~\eqref{eq:Copt} is $\beta$-cocoercive with respect to the $P$ norm, where 
\begin{equation}
\beta=\tau \min \{\beta_{l}^{-1},\beta_{h}^{-1}\} \label{eq:-42-43},
\end{equation}
with $\tau$ defined in \eqref{eq:-42-42}.  
If in addition $l=\iota_{\{0\}}$ then 
\begin{equation}
\beta=\beta_{h}^{-1}\left(\gamma_{1}^{-1}-\frac{\gamma_{2}}{4}\theta^{2}\|L\|^{2}\right).\label{eq:-30}
\end{equation}
\end{lem}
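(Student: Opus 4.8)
The backbone is the Baillon--Haddad theorem \cite{bauschke2011convex}. Since $h$ is convex and differentiable on $\HH$ with $\beta_h$-Lipschitz gradient, $\nabla h$ is $\beta_h^{-1}$-cocoercive with respect to the canonical norm; and since $l\in\Gamma_0(\mathcal{G})$ is $\beta_l^{-1}$-strongly convex, its conjugate $l^*$ is convex with $\beta_l$-Lipschitz gradient (the stated equivalent of strong convexity of $l$), so $\nabla l^*$ is $\beta_l^{-1}$-cocoercive. I would then take $z=(x,y)$, $z'=(x',y')$, split the inner product over the two blocks of $C$ in \eqref{eq:Copt}, apply these two cocoercivity estimates block-wise, and bound the minimum of $\beta_h^{-1},\beta_l^{-1}$ from below to obtain
\[
\langle Cz-Cz',z-z'\rangle \ \geq\ \min\{\beta_h^{-1},\beta_l^{-1}\}\,\|Cz-Cz'\|^2 ,
\]
where the norm on the right is the canonical norm on $\KK$.

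Next I would pass from the canonical norm to $\|\cdot\|_{P^{-1}}$. By \Cref{lem:strongly positive p-1} the hypothesis \eqref{eq:-41} yields $P\in\mathcal{S}_\tau(\KK)$ with $\tau$ as in \eqref{eq:-42-42}, hence $P^{-1}\preceq\tau^{-1}\id$ and $\|w\|_{P^{-1}}^2\leq\tau^{-1}\|w\|^2$ for every $w\in\KK$, i.e. $\|w\|^2\geq\tau\|w\|_{P^{-1}}^2$. Chaining this with the previous display gives $\langle Cz-Cz',z-z'\rangle\geq\tau\min\{\beta_h^{-1},\beta_l^{-1}\}\|Cz-Cz'\|_{P^{-1}}^2$, which is exactly the assertion with $\beta$ as in \eqref{eq:-42-43}.

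For the sharper constant \eqref{eq:-30} when $l=\iota_{\{0\}}$, I would first observe that then $l^*\equiv0$, so $\nabla l^*\equiv0$ and $Cz-Cz'=(w,0)$ with $w=\nabla h(x)-\nabla h(x')$; only the first block of $P^{-1}$ is felt. Rather than inverting the $2\times2$ operator block matrix $P$ from \eqref{eq:-17-1} explicitly, I would evaluate $\|(w,0)\|_{P^{-1}}^2$ by solving $P(u_1,u_2)=(w,0)$: the second equation gives $u_2=\tfrac{\gamma_2\theta}{2}Lu_1$, and substituting into the first gives $Qu_1=w$ with $Q\coloneqq\gamma_1^{-1}\id-\tfrac{\gamma_2\theta^2}{4}L^*L$ (the Schur complement). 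Since $\|L^*L\|=\|L\|^2$, we have $Q\succeq c\,\id$ with $c\coloneqq\gamma_1^{-1}-\tfrac{\gamma_2\theta^2}{4}\|L\|^2>0$ by \eqref{eq:-41}; therefore $Q^2\succeq cQ$ and
\[
\|(w,0)\|_{P^{-1}}^2=\langle w,u_1\rangle=\langle Qu_1,u_1\rangle\leq c^{-1}\langle Q^2u_1,u_1\rangle=c^{-1}\|w\|^2 .
\]
Combining with $\langle Cz-Cz',z-z'\rangle=\langle\nabla h(x)-\nabla h(x'),x-x'\rangle\geq\beta_h^{-1}\|w\|^2\geq\beta_h^{-1}c\,\|(w,0)\|_{P^{-1}}^2$ yields \eqref{eq:-30}.

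The only place demanding care is this last step: the clean route is to identify the quadratic form of $P^{-1}$ on vectors $(w,0)$ through the Schur complement $Q$ and then invoke the elementary bound $\langle Qu,u\rangle\leq c^{-1}\|Qu\|^2$ valid for any $Q\succeq c\,\id\succ0$ (a one-line consequence of $Q^2\succeq cQ$). The rest is bookkeeping, and the two special cases $l=\iota_{\{0\}}$ and $h\equiv0$ both drop out of the same computation.
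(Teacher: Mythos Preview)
Your proposal is correct and follows essentially the same approach as the paper: block-wise Baillon--Haddad to get cocoercivity in the canonical norm, then the strong positivity $P\succeq\tau\id$ (hence $P^{-1}\preceq\tau^{-1}\id$) to pass to $\|\cdot\|_{P^{-1}}$; for the $l=\iota_{\{0\}}$ case, the paper likewise identifies $\|(w,0)\|_{P^{-1}}^2=\langle w,Q^{-1}w\rangle$ with the same Schur complement $Q=\gamma_1^{-1}\id-\tfrac{\gamma_2\theta^2}{4}L^*L$ and bounds it by $c^{-1}\|w\|^2$. The only cosmetic difference is that the paper invokes $\|Q^{-1}\|\leq c^{-1}$ directly, whereas you recover the same inequality via $Q^2\succeq cQ$ after solving $P(u_1,u_2)=(w,0)$.
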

\begin{proof}
See \proofRef{proof of lem: cocoercivity lemma}.
\end{proof}
\begin{rem}
It is easy to derive a more conservative strong positivity parameter in \Cref{lem:strongly positive p-1} similar to \cite[Equation (3.20)]{vu2013splitting}:
\begin{equation}
\tau = \min \{\gamma^{-1}_1,\gamma^{-1}_2\}\left(1-\frac{\theta}{2}\sqrt{\gamma_1 \gamma_2 \|L\|^2}\right). \label{42:-41}
\end{equation} 
It must be clear that $\tau$ in \eqref{eq:-42-43} can be replaced with \eqref{42:-41}. However, \eqref{42:-41} does not result in simplification of our convergence analysis and 
the strong positivity parameter \eqref{eq:-42-42} is larger (less conservative) than \eqref{42:-41}, as long as $P$ is a strongly positive operator. Notice that according to \eqref{eq:-42-43} larger $\tau$ results in larger cocoercivity parameter for $C$ and hence is less conservative. A more general version of \Cref{lem: cocoercivity lemma} for several composite functions can be easily derived in a similar way but we will not consider it in this article. 
\end{rem}
 \Cref{Algorithm-1} gives us an extra degree of freedom in choosing $S$. Our aim here is to select $S$ so as to derive an easy to implement scheme without sacrificing  flexibility and generality of the algorithm. To this end, let us define $S_1\in\mathscr{B}(\mathcal{K})$, $S_{2}\in\mathscr{B}(\mathcal{K})$ as follows:
 \begin{subequations}   \label{eq:S}
 	{\mathtight \begin{align} 
 		S_{1}&:(x,y)\mapsto(\gamma_{1}^{-1}x+(1-\theta)L^{*}y,(1-\theta)Lx+\gamma_{2}^{-1}y+\gamma_{1}(1-\theta)(2-\theta)LL^{*}y), \label{eq:S-5}\\
 		S_{2}&:(x,y)\mapsto(\gamma_{1}^{-1}x+\gamma_{2}(2-\theta)L^{*}Lx-L^{*}y,-Lx+\gamma_{2}^{-1}y).\label{eq:S-6} 
 	\end{align} }
 \end{subequations}
  Then, let $\mu\in[0,1]$ and define   
  \begin{equation}\label{eqn:S}
  S=\left(\mu S_{1}^{-1}+(1-\mu)S_{2}^{-1}\right)^{-1},
  \end{equation}
  so that $S^{-1}$ is a convex combination of $S_1^{-1}$ and $S_2^{-1}$. For operator $D$ defined in~\eqref{eq:D} we have 
 \begin{equation}\label{eqn:D}
 D=\mu D_{1}+(1-\mu)D_{2},
 \end{equation}
  for $\mu\in[0,1]$, where $D_1\in\mathscr{B}(\mathcal{K})$ and  $D_{2}\in\mathscr{B}(\mathcal{K})$ follow from~\eqref{eq:D} by substituting $S_1$ and $S_2$, respectively:
 \begin{subequations}   \label{eqnn:D}
 	{\mathtight
\begin{align} 
D_{1}&:(x,y)\mapsto(\gamma_{1}^{-1}x-L^{*}y,-Lx+\gamma_{2}^{-1}y+\gamma_{1}(2-\theta)LL^{*}y), \label{eq:-5}\\
D_{2}&:(x,y)\mapsto(\gamma_{1}^{-1}x+\gamma_{2}(1-\theta)(2-\theta)L^{*}Lx+(1-\theta)L^{*}y,(1-\theta)Lx+\gamma_{2}^{-1}y).\label{eq:-6} 
 \end{align} }
 \end{subequations} 
This choice is simple enough, yet it allows us to unify and generalize several well known methods. The following lemma establishes conditions for strong positivity of $S,D$ and will be used throughout this chapter. 
\begin{lem}
\label{lem:strongly} Let  $\gamma_{1},\gamma_{2}>0$  and assume that \eqref{eq:-41} holds. Then $S$ and $D$ defined in
\eqref{eqn:S} and \eqref{eqn:D} are
strongly positive.\end{lem}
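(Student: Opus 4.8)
The plan is to peel off the easy reductions and concentrate all the work on the two ``generators'' $S_1$ and $S_2$ of \eqref{eq:S}. First note that $S_1$ and $S_2$ are bounded and self-adjoint: their off-diagonal blocks are $\pm(1-\theta)L$ and $\pm(1-\theta)L^{*}$ (for $S_1$), resp.\ $\pm L$ and $\pm L^{*}$ (for $S_2$), which are mutually adjoint, and the diagonal blocks involve only $\id$, $LL^{*}$, $L^{*}L$. Hence it suffices to bound their quadratic forms below by $\tau_i\|\cdot\|^2$ with $\tau_i>0$.

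The computational core is a completion of squares. For $z=(x,y)$, collecting terms and completing the square in $x$ (resp.\ in $y$) gives
\[
\langle z,S_1z\rangle=\gamma_1^{-1}\|x+\gamma_1(1-\theta)L^{*}y\|^2+\gamma_1(1-\theta)\|L^{*}y\|^2+\gamma_2^{-1}\|y\|^2,
\]
\[
\langle z,S_2z\rangle=\gamma_1^{-1}\|x\|^2+\gamma_2(1-\theta)\|Lx\|^2+\gamma_2^{-1}\|y-\gamma_2Lx\|^2.
\]
For $\theta\le1$ the middle terms are nonnegative; dropping them and using $\|x\|^2\le2\|x+\gamma_1(1-\theta)L^{*}y\|^2+2\gamma_1^2(1-\theta)^2\|L\|^2\|y\|^2$ (and its $S_2$-analogue) turns the remaining diagonal form into $\tau_i\|z\|^2$. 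The delicate case is $\theta>1$, where the middle term is negative: there I would bound $\|L^{*}y\|^2\le\|L\|^2\|y\|^2$ (resp.\ $\|Lx\|^2\le\|L\|^2\|x\|^2$), so that the coefficient of $\|y\|^2$ (resp.\ of $\|x\|^2$) becomes $\gamma_2^{-1}-\gamma_1(\theta-1)\|L\|^2$ (resp.\ $\gamma_1^{-1}-\gamma_2(\theta-1)\|L\|^2$). The point is that \eqref{eq:-41}, i.e.\ $\gamma_1\gamma_2\theta^2\|L\|^2<4$, together with the elementary bound $4/\theta^2\le1/(\theta-1)$ for $\theta>1$ --- which is just $(\theta-2)^2\ge0$ --- gives $\gamma_1\gamma_2(\theta-1)\|L\|^2<1$, so both coefficients are strictly positive; the same triangle-inequality estimate as before then yields $S_i\succeq\tau_i\id$ with explicit $\tau_i>0$. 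I expect this to be the only genuinely nonroutine step: recognizing that $(\theta-2)^2\ge0$ is exactly what bridges \eqref{eq:-41} to positivity.

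It remains to pass from $S_1,S_2$ to $S$ and $D$. By \eqref{eqn:S}, $S^{-1}=\mu S_1^{-1}+(1-\mu)S_2^{-1}$ with $\mu\in[0,1]$; since each $S_i$ is bounded and strongly positive, each $S_i^{-1}$ is bounded and strongly positive, so the convex combination $S^{-1}$ is again bounded and strongly positive, whence $S=(S^{-1})^{-1}$ is strongly positive. For $D$ I would use \Cref{lem:strongly positive p-1}: under \eqref{eq:-41} the operator $P$ of \eqref{eq:-17-1} lies in $\mathcal{S}_\tau(\mathcal{K})$ with $\tau$ as in \eqref{eq:-42-42}, and since $K$ is skew-adjoint and $M$ of \eqref{eq:Mopt} is monotone, inequality \eqref{eq:D pos} applies and gives $\langle z,Dz\rangle\ge\tau^2\|S\|^{-1}\|z\|^2$; as $S$ is now known to be bounded, $D$ is strongly positive. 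Alternatively, one may observe that $D_1,D_2$ of \eqref{eqnn:D} admit the completed-square forms $\gamma_1^{-1}\|x-\gamma_1L^{*}y\|^2+\gamma_1(1-\theta)\|L^{*}y\|^2+\gamma_2^{-1}\|y\|^2$ and $\gamma_1^{-1}\|x\|^2+\gamma_2(1-\theta)\|Lx\|^2+\gamma_2^{-1}\|y+\gamma_2(1-\theta)Lx\|^2$, identical in structure to $S_1,S_2$, so the argument above shows that $D_1,D_2$ are strongly positive directly, and then $D=\mu D_1+(1-\mu)D_2$ by \eqref{eqn:D} is strongly positive as a convex combination.
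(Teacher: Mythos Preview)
Your proposal is correct. The completion-of-squares identities for $\langle z,S_i z\rangle$ and $\langle z,D_i z\rangle$ check out, the case split at $\theta=1$ is sound, and the algebraic bridge $(\theta-2)^2\ge0\Leftrightarrow \theta-1\le\theta^2/4$ is exactly what is needed to pass from \eqref{eq:-41} to $\gamma_1\gamma_2(\theta-1)\|L\|^2<1$ in the range $\theta>1$. The passage from $S_1,S_2$ to $S$ via the convex combination of inverses, and the argument for $D$ via \eqref{eq:D pos}, are both correct and match the paper.

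The route, however, is genuinely different from the paper's. The paper does not complete the square in $S_i$; instead it writes $\langle z,S_2 z\rangle=c\|z\|^2+(\gamma_1^{-1}-c)\|x\|^2-2\langle Lx,y\rangle+(\gamma_2^{-1}-c)\|y\|^2+\gamma_2(2-\theta)\|Lx\|^2$ for a small $c>0$, applies Fenchel--Young with $\epsilon=\gamma_2^{-1}-c$, and then invokes strong positivity of $P-c\id$ (i.e., \Cref{lem:strongly positive p-1} with $c<\tau$) to control the remaining $x$-terms. Its case split is at $\theta=2$, governing how small $c$ must be. Your argument is more self-contained and arguably more transparent: the completed-square forms expose the common structure of $S_1,S_2,D_1,D_2$ at a glance, and the key inequality $\theta-1\le\theta^2/4$ isolates precisely why \eqref{eq:-41} suffices. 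The paper's argument, on the other hand, makes the conceptual point more directly that strong positivity of $S$ is inherited from that of $P$ and therefore imposes no extra constraint. Both proofs reach the same conclusion under the same hypothesis.
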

\begin{proof}
See \proofRef{proof of lem:strongly}.
\end{proof}
\Cref{lem:strongly} shows that this choice of $S$  poses no additional constraint on the parameters because it is strongly positive under the same condition required for strong positivity of $P$. By using~\eqref{eq:bar_prox} and $S$ defined in~\eqref{eqn:S} (or equivalently $D$ defined in~\eqref{eqn:D}), we derive the following algorithm from \Cref{Algorithm-1}.
\begin{algorithm}[H] 
    \caption{}   \label{Algorithm-2}
    \begin{algorithmic} 
            \item\textbf{Inputs:} $x_0\in\HH$, $y_0\in\mathcal{G}$
            \For{$n=0,\ldots$} 
                \State $\bar{x}_{n} =\prox_{\gamma_{1}f}(x_{n}-\gamma_{1}L^{*}y_{n}-\gamma_{1}\nabla h(x_{n}))$
                \State $ \displaystyle \bar{y}_{n}=\prox_{\gamma_{2}g^{*}}(y_{n}+\gamma_{2}L((1-\theta)x_{n}+\theta\bar{x}_{n})-\gamma_{2}\nabla l^{*}(y_{n}))$
                \State $\displaystyle \tilde{x}_{n} =\bar{x}_{n}-x_{n},\quad\tilde{y}_{n}=\bar{y}_{n}-y_{n}$
                \State  Compute  $\displaystyle \alpha_{n}$ according to \eqref{eq:alpha}
                \State $x_{n+1} =x_{n}+\alpha_{n}(\tilde{x}_{n}-\mu\gamma_{1}(2-\theta)L^{*}\tilde{y}_{n})$
                \State $y_{n+1} =y_{n}+\alpha_{n}(\gamma_{2}(1-\mu)(2-\theta)L\tilde{x}_{n}+\tilde{y}_{n})$
            \EndFor
    \end{algorithmic} 
    \end{algorithm} 
\begin{align}
\alpha_{n} & =\lambda_{n}\frac{\gamma_{1}^{-1}\|\tilde{x}_{n}\|^{2}+\gamma_{2}^{-1}\|\tilde{y}_{n}\|^{2}-\theta\langle\tilde{x}_{n},L^{*}\tilde{y}_{n}\rangle}{V(\tilde{x}_{n},\tilde{y}_{n})}, \label{eq:alpha}
\end{align}
where 
\begin{align}
V(\tilde{x}_{n},\tilde{y}_{n}) & =\gamma_{1}^{-1}\|\tilde{x}_{n}\|^{2}+\gamma_{2}^{-1}\|\tilde{y}_{n}\|^{2}+(1-\mu)\gamma_{2}(1-\theta)(2-\theta)\|L\tilde{x}_{n}\|^{2}\nonumber\\
 & +\mu\gamma_{1}(2-\theta)\|L^{*}\tilde{y}_{n}\|^{2}+2((1-\mu)(1-\theta)-\mu)\langle\tilde{x}_{n},L^{*}\tilde{y}_{n}\rangle.\label{eqn:-3}
\end{align}
The convergence properties of \Cref{Algorithm-2} are stated in the following theorem. When $l$ is the indicator of $\{0\}$ or $h\equiv0$, it is possible to derive less conservative conditions. \Cref{cor:coco}  distinguishes three cases: the general case; the case with $l=\iota_{\{0\}}$; and the case when $h\equiv0$, $l=\iota_{\{0\}}$. The analysis for $h\equiv0$ and general $l$ is similar.  
\begin{thm} \label{cor:coco}
Consider the sequences $(x_{n})_{n\in\Nn}$ and $(y_{n})_{n\in\Nn}$ generated
by \Cref{Algorithm-2}. Let $\beta_{h}\in]0,+\infty[$ and $\beta_{l}\in]0,+\infty[$ be the
Lipschitz constants of $\nabla h$ and $\nabla l^{*}$, respectively. Let $\theta\in[0,\infty[$ and suppose that one of the following holds:
\begin{enumerate}[{label=(\textit{\roman*})},ref=\textit{\roman{*}}]
\item \label{coco:part1} $(\lambda_{n})_{n\in\Nn}$ is such that \eqref{eq:-7} holds with  
\begin{equation}
\delta=2-\frac{1}{2\tau}\max\{\beta_{l},\beta_{h}\}, \label{eq:dd}
\end{equation}
where $\tau$ is defined in \eqref{eq:-42-42} and 
\begin{equation} \label{eq:panos3}
4\tau\min\{\beta_{h}^{-1},\beta_{l}^{-1}\}>1.
\end{equation}
\item \label{coco:part2} $l=\iota_{\{0\}}$, $(\lambda_{n})_{n\in\Nn}$ is such that~\eqref{eq:-7} holds with 
\begin{equation*} 
	\delta=2-\tfrac{\beta_{h}}{2}\left(\gamma_{1}^{-1}-\tfrac{\gamma_{2}}{4}\theta^{2}\|L\|^{2}\right)^{-1} \quad \textrm{and} \quad \gamma_{1}^{-1}-\tfrac{\gamma_{2}}{4}\theta^{2}\|L\|^{2}>\tfrac{\beta_{h}}{4}.
\end{equation*}
\item \label{coco:part3} $l=\iota_{\{0\}}$, $h\equiv0$ and $(\lambda_{n})_{n\in\Nn}$ is such that~\eqref{eq:-7} holds with $\delta=2$ and 
\begin{equation} \label{eq:-122-P}
\gamma_{1}^{-1}-\frac{\gamma_{2}}{4}\theta^{2}\|L\|^{2}>0.
\end{equation} 
\item \label{coco:part4} $l=\iota_{\{0\}}$, $h\equiv0$, $\theta=2$ and $\mathcal{K}$ is finite-dimensional. $(\lambda_{n})_{n\in\Nn}$ is uniformly bounded in the interval $]0,2[$ and 
$\gamma_{1}^{-1}-\gamma_{2}\|L\|^{2}\geq0$.
\end{enumerate}
Then there exists a pair of solutions $(x^{\star},y^{\star})\in\mathcal{K}$
to
~\eqref{eq:primal-dual} such that the sequences
$(x_{n})_{n\in\Nn}$ and $(y_{n})_{n\in\Nn}$ converge weakly to $x^{\star}$ and $y^{\star}$, respectively. 
\end{thm}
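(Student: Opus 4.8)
The strategy is to recognize \Cref{Algorithm-2} as the instance of \Cref{Algorithm-1} obtained from the choices~\eqref{eq:-17-1} for $P$, the skew-adjoint operator $K$ for which $H=P+K$ equals~\eqref{eq:-42-42-42}, and $S$ as in~\eqref{eqn:S} (equivalently $D$ as in~\eqref{eqn:D}), and then to invoke the abstract convergence results of \Cref{sec:4} in each of the four cases. Several ingredients are common to all cases and are already available: the operator $A$ of~\eqref{eq:Aopt} is maximally monotone, $M$ of~\eqref{eq:Mopt} is skew-adjoint (hence monotone), $K$ is skew-adjoint, and $H$ is block lower-triangular, so by \Cref{lem:separable} the update $\bar z_n=(H+A)^{-1}(H-M-C)z_n$ in \hyperlink{alg:line1}{Step~1} decouples into the two proximal evaluations~\eqref{eq:bar_prox}; using~\eqref{eq:-17-1} the numerator of $\alpha_n$ in~\eqref{eq:alpha} equals $\|\tilde z_n\|_P^2$ and, by~\eqref{eqn:D}, $V(\tilde x_n,\tilde y_n)=\langle\tilde z_n,D\tilde z_n\rangle$, so \hyperlink{alg:line3}{Step~3} and \hyperlink{alg:line4}{Step~4} of \Cref{Algorithm-1} are exactly~\eqref{eq:alpha} and the last two updates of \Cref{Algorithm-2}; finally, nonemptiness of $\zer(T)$ for the relevant $T$ built from~\eqref{eq:Aopt}--\eqref{eq:Copt} follows from~\eqref{eqn:existance}, since $\zer(A+M+C)$ is the solution set of~\eqref{eq:primal-dual}. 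With all this taken for granted, only three points remain per case: strong positivity of $P$ and $S$, the cocoercivity constant $\beta$ of $C$ with respect to $\|\cdot\|_P$, and the value of $\delta$.

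For case~\eqref{coco:part1}, the inequality~\eqref{eq:panos3} can hold only if $\tau>0$ (otherwise its left-hand side is $\leq0$), with $\tau$ as in~\eqref{eq:-42-42}; hence~\eqref{eq:-41} holds and \Cref{lem:strongly positive p-1,lem:strongly} make $P$, $S$, $D$ strongly positive. By \Cref{lem: cocoercivity lemma}, $C$ of~\eqref{eq:Copt} is $\beta$-cocoercive with respect to $\|\cdot\|_P$ with $\beta=\tau\min\{\beta_l^{-1},\beta_h^{-1}\}$, and~\eqref{eq:panos3} is precisely $\beta>1/4$ in \Cref{assumption-1}\eqref{enu:Let--and}; then $\delta=2-\tfrac1{2\beta}$ coincides with~\eqref{eq:dd}, and the stated hypothesis on $(\lambda_n)_{n\in\Nn}$ is~\eqref{eq:-7}. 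Hence \Cref{thm:Suppose-that-}\eqref{thm-part3} yields $z_n=(x_n,y_n)\rightharpoonup z^\star=(x^\star,y^\star)\in\zer(A+M+C)$, which by construction of~\eqref{eq:Aopt}--\eqref{eq:Copt} solves~\eqref{eq:primal-dual}. Case~\eqref{coco:part2} is identical, except that since $\nabla l^{*}\equiv0$ one uses the sharper constant $\beta=\beta_h^{-1}(\gamma_1^{-1}-\tfrac{\gamma_2}{4}\theta^2\|L\|^2)$ from~\eqref{eq:-30}; the assumed inequality is exactly $\beta>1/4$ and, a fortiori,~\eqref{eq:-41}, and $\delta=2-\tfrac1{2\beta}$ gives the stated value. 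In case~\eqref{coco:part3}, $h\equiv0$ and $l=\iota_{\{0\}}$ force $C\equiv0$; condition~\eqref{eq:-122-P} is~\eqref{eq:-41}, so $P$, $S$, $D$ are strongly positive, and \Cref{thm:Suppose-that-} applies with $\delta=2$ via its final ``$C\equiv0$'' clause.

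Case~\eqref{coco:part4} is the delicate one and requires \Cref{thm:Suppose-that--1} instead of \Cref{thm:Suppose-that-}, because at $\theta=2$ the operator $P$ in~\eqref{eq:-17-1} is only positive, not strongly positive, when $\gamma_1^{-1}=\gamma_2\|L\|^2$. The plan is to observe that at $\theta=2$ the auxiliary operators collapse: $K$ coincides with $M$ of~\eqref{eq:Mopt}, both $S_1$ and $S_2$ in~\eqref{eq:S} equal $P$ so that $S=P$, and $C\equiv0$; moreover $V(\tilde x_n,\tilde y_n)$ then reduces to the numerator of~\eqref{eq:alpha}, whence $\alpha_n=\lambda_n$ and the last two updates of \Cref{Algorithm-2} become $z_{n+1}=z_n+\lambda_n(\bar z_n-z_n)$. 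Thus \Cref{Algorithm-2} reduces exactly to the iteration~\eqref{Algorithm-Pi}--\eqref{Algorithm-Pii} with $H=P+M$ as in~\eqref{eq:-42-42-42}. Since $H$ is block lower-triangular with strongly positive diagonal blocks $\gamma_1^{-1}\id$ and $\gamma_2^{-1}\id$, \Cref{lem:separable} shows that $(H+A)^{-1}$, hence $(H+A)^{-1}P$, is continuous with full domain; the remaining hypotheses of \Cref{thm:Suppose-that--1} ($\mathcal{K}$ finite-dimensional, $\zer(A+M)\neq\emptyset$ by~\eqref{eqn:existance}, $(\lambda_n)_{n\in\Nn}$ uniformly bounded in $]0,2[$) hold by assumption, so \Cref{thm:Suppose-that--1}\eqref{thm-P-part3} gives $z_n\to z^\star\in\zer(A+M)$, a solution of~\eqref{eq:primal-dual}, from which $(x^\star,y^\star)$ is read off as before. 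The main obstacle is precisely this last case: one must verify that the generic $S$ of~\eqref{eqn:S} genuinely degenerates to $P$ at $\theta=2$ and that the resulting iteration matches~\eqref{Algorithm-Pi}--\eqref{Algorithm-Pii}, making the convergence theorem for merely positive $P$ applicable; cases~\eqref{coco:part1}--\eqref{coco:part3} are then routine substitutions feeding the correct $\beta$ and $\delta$ into \Cref{thm:Suppose-that-}.
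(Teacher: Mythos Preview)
Your proof is correct and follows essentially the same route as the paper's own argument: in cases~\eqref{coco:part1}--\eqref{coco:part3} you feed the appropriate cocoercivity constant from \Cref{lem: cocoercivity lemma} into \Cref{thm:Suppose-that-} after invoking \Cref{lem:strongly positive p-1,lem:strongly} for strong positivity, and in case~\eqref{coco:part4} you appeal to \Cref{thm:Suppose-that--1} via \Cref{lem:separable}. Your treatment of case~\eqref{coco:part4} is in fact more explicit than the paper's, spelling out that at $\theta=2$ one has $K=M$, $S_1=S_2=P$, and $\alpha_n=\lambda_n$, so that \Cref{Algorithm-2} collapses to the iteration~\eqref{Algorithm-Pi}--\eqref{Algorithm-Pii}; the paper leaves these verifications to the reader.
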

\begin{proof}
\eqref{coco:part1}: The proof relies on \Cref{thm:Suppose-that-}. From~\eqref{eq:panos3} we have that~\eqref{eq:-41} holds.  Therefore, strong positivity of $P$ and $S$ follow from~\Cref{lem:strongly positive p-1,lem:strongly}. Use \Cref{lem: cocoercivity lemma} and substitute~\eqref{eq:-42-43}  in~\eqref{eq:-7} to derive~\eqref{eq:dd} and~\eqref{eq:panos3}. Therefore, weak convergence follows by invoking \Cref{thm:Suppose-that-}. 
\eqref{coco:part2}: Mimic the proof of \eqref{coco:part1} but replace the cocoercivity parameter, $\beta$, with~\eqref{eq:-30} instead of~\eqref{eq:-42-43}. \eqref{coco:part3}: The  proof is similar to previous parts, except that we use \Cref{thm:Suppose-that-} with $C\equiv0$.  \eqref{coco:part4}: In this case $P\in \mathcal{S}(\mathcal{K})$, $P\succeq0$.
In view of \eqref{eq:Aopt},~\eqref{eq:-42-42-42} and \Cref{lem:separable} $(H+A)^{-1}P$  is continuous,  and thus~\Cref{thm:Suppose-that--1} completes the proof. 
\end{proof}
\begin{rem}
Condition~\eqref{eq:panos3} of \Cref{cor:coco} can be simplified for some practical special cases. For example, if  ${\mathtight\gamma_1\!=\!\gamma_2\!=\!\gamma}$, \eqref{eq:panos3} becomes ${\mathtight \gamma\!<\!{4}\left(\!2\theta \|L\|\!+\!\max\{\beta_h,\!\beta_l\}\!\right)^{-1}
}$.
\end{rem}
\begin{rem}
\label{rem:()We-note-that} \Cref{Algorithm-2} requires 4 matrix vector products in the general case. However, we consider
two special cases $\mu=1$ and $\mu=0$ in which only 2 matrix vector products are needed. Consider the case where $\mu=0$. Initially calculate
$Lx_{0}$ and $L\bar{x}_{0}$. This yields $L\tilde{x}_{0}$, 
$\alpha_{0}$ and consequently $Lx_{1}=Lx_{0}+\alpha_{0}L\tilde{x}_{0}$.
Continue in the same fashion. Therefore at each iteration we only
need to calculate $L\bar{x}_{n}$. A similar trick can be applied
when $\mu=1$.
\end{rem}
\begin{rem}
 \Cref{Algorithm-2}, is not symmetric with respect to the primal and dual variables.
Switching the role of the primal and dual variables we obtain 
\begin{align*}
\bar{y}_{n} & =\prox_{\gamma_{2}g^{*}}(y_{n}+\gamma_{2}Lx_{n}-\gamma_2 \nabla l^{*}(y_n))\\
\bar{x}_{n} & =\prox_{\gamma_{1}f}(x_{n}-\gamma_{1}L^{*}((1-\theta)y_{n}+\theta\bar{y}_{n})-\gamma_{1}\nabla h(x_{n}))\\
\tilde{x}_{n} & =\bar{x}_{n}-x_{n},\tilde{y}_{n}=\bar{y}_{n}-y_{n}\\
y_{n+1} & =y_{n}+\alpha_{n}(\mu(2-\theta)\gamma_{2}L\tilde{x}_{n}+\tilde{y}_{n})\\
x_{n+1} & =x_{n}+\alpha_{n}(\tilde{x}_n-(1-\mu)(2-\theta)\gamma_{1}L^{*}\tilde{y}_n),
\end{align*}
with
\begin{align*}
\alpha_{n} & =\lambda_{n}\frac{\gamma_{1}^{-1}\|\tilde{x}_n\|^{2}+\gamma_{2}^{-1}\|\tilde{y}_n\|^{2}+\theta\langle\tilde{x}_n,L^{*}\tilde{y}_n\rangle}{V(\tilde{x}_{n},\tilde{y}_{n})},
\end{align*}
where
\begin{align*}
V(\tilde{x}_{n},\tilde{y}_{n}) & =\gamma_{1}^{-1}\|\tilde{x}_n\|^{2}+\gamma_{2}^{-1}\|\tilde{y}_n\|^{2}+\mu\gamma_{2}(2-\theta)\|L\tilde{x}_n\|^{2}\\
 & +(1-\mu)\gamma_{1}(1-\theta)(2-\theta)\|L^{*}\tilde{y}_n\|^{2}\\
 & -2((1-\mu)(1-\theta)-\mu)\langle\tilde{x}_n,L^{*}\tilde{y}_n\rangle,
\end{align*}
for $\mu\in[0,1]$ and $\theta\in[0,+\infty[$. 
\end{rem}

\subsection{ Special Case \texorpdfstring{$\theta=2$}{}} \label{sub:theta=00003D2}
One notable special case of \Cref{Algorithm-2} is the algorithm proposed by V\~{u} and Condat in \cite{vu2013splitting,condat2013primal}.
Substitute $\theta=2$ in~\Cref{Algorithm-2} to obtain
\begin{align*}
\bar{x}_{n} & =\prox_{\gamma_{1}f}(x_{n}-\gamma_{1}L^{*}y_{n}-\gamma_{1}\nabla h(x_{n}))\\
\bar{y}_{n} & =\prox_{\gamma_{2}g^{*}}(y_{n}+\gamma_{2}L(2\bar{x}_{n}-x_{n})-\gamma_{2}\nabla l^{*}(y_{n}))\\
\tilde{x}_{n} & =\bar{x}_{n}-x_{n},\tilde{y}_{n}=\bar{y}_{n}-y_{n}\\
x_{n+1} & =x_{n}+\lambda_{n}\tilde{x}_{n}\\
y_{n+1} & =y_{n}+\lambda_{n}\tilde{y}_{n}.
\end{align*}
The convergence results are summarized in the following proposition and they are direct consequences of \Cref{cor:coco,Thm: conv-rates,thm:Suppose-that--1} and for this reason the proof is omitted. 
\begin{prop}
\label{prop:condat}
Consider the sequences $(x_{n})_{n\in\Nn}$ and $(y_{n})_{n\in\Nn}$ generated
by \Cref{Algorithm-4}. Let $\beta_{h}\in]0,+\infty[$ and $\beta_{l}\in]0,+\infty[$ be the
Lipschitz constants of $\nabla h$ and $\nabla l^{*}$, respectively. The following results hold:
\begin{enumerate}
\item \label{prop:condat:1} (Convergence) Suppose one of the following holds: 
\begin{enumerate}[{label=(\textit{\roman*})},ref=\textit{1-\roman{*}}]
\item \label{prop:condat:part1} $(\lambda_{n})_{n\in\Nn}$ is such that \eqref{eq:-7} holds where $\delta$ is defined in~\eqref{eq:dd} with $\theta=2$.  
\item \label{prop:condat:part2} $l=\iota_{\{0\}}$, $(\lambda_{n})_{n\in\Nn}$ is such that~\eqref{eq:-7} holds with $\delta\!=\!2-\tfrac{\beta_{h}}{2}\!\left(\gamma_{1}^{-1}\!-\!\gamma_{2}\|L\|^{2}\right)\!^{-1}$, 
 and
\begin{equation*} 
\gamma_{1}^{-1}-\gamma_{2}\|L\|^{2}>\frac{\beta_{h}}{4}.
\end{equation*}
\item \label{prop:condat:part3} $l=\iota_{\{0\}}$, $h\equiv0$ and $(\lambda_{n})_{n\in\Nn}$ is such that~\eqref{eq:-7} holds with $\delta=2$ and 
\[
\gamma_{1}^{-1}-\gamma_{2}\|L\|^{2}>0,
\]  
\item \label{prop:condat:part4} $l=\iota_{\{0\}}$, $h\equiv0$ and $\mathcal{K}$ is finite-dimensional. $(\lambda_{n})_{n\in\Nn}$ is uniformly bounded in the interval $]0,2[$  and $\gamma_{1}^{-1}-\gamma_{2}\|L\|^{2}\geq0$.
\end{enumerate}
Then there exists a pair of solutions
$(x^{\star},y^{\star})\in\mathcal{K}$ to 
\eqref{eq:primal-dual} such that the sequences
$(x_{n})_{n\in\Nn}$ and $(y_{n})_{n\in\Nn}$ converge weakly to $x^{\star}$ and $y^{\star}$, respectively.
\item \label{prop:condat:2} (Convergence rate) 
\begin{enumerate}[{label=(\textit{\roman*})},ref=\textit{2-\roman{*}}]
\item Let $(\lambda_n(\delta-\lambda_n))_{n\in\Nn}\subseteq[\underline{\tau},\infty[$ for some $\underline{\tau}>0$. For \eqref{prop:condat:part1}-\eqref{prop:condat:part3} the following convergence rates hold:
\begin{equation*} 
\|\tilde{z}_{n}\|_{P}^{2}\leq\frac{1}{\underline{\tau}(n+1)}\|z_{0}-z^{\star}\|_{P}^{2}\quad\textrm{and}\quad \|\tilde{z}_{n}\|_{P}^{2}=o(1/(n+1)).
\end{equation*}
\item For \eqref{prop:condat:part4} the convergence rates in \eqref{positiv-P-rate} holds, with $R,Q$ defined therein.  
\end{enumerate}
\end{enumerate}
\end{prop}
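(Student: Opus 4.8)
Since \Cref{prop:condat} concerns \Cref{Algorithm-2} with $\theta=2$, the plan is to read off all four convergence claims and the two rate claims from \Cref{cor:coco}, \Cref{Thm: conv-rates} and \Cref{thm:Suppose-that--1}, the only work being to check that the hypotheses and conclusions match after setting $\theta=2$. The first step is to record how the auxiliary operators collapse. Putting $\theta=2$ in~\eqref{eq:-17-1} and~\eqref{eq:-42-42-42} gives $P\colon(x,y)\mapsto(\gamma_1^{-1}x-L^*y,-Lx+\gamma_2^{-1}y)$ and $H\colon(x,y)\mapsto(\gamma_1^{-1}x,-2Lx+\gamma_2^{-1}y)$, while $K$ becomes $(x,y)\mapsto(L^*y,-Lx)$, which is precisely $M$ from~\eqref{eq:Mopt}; hence $H=P+M$ with $M$ skew-adjoint and $H+M^*=P$. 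Every factor $(2-\theta)$ appearing in~\eqref{eq:S} and~\eqref{eqnn:D} vanishes, so $S_1=S_2=P$ and $D_1=D_2=P$, whence $S=P$ by~\eqref{eqn:S} and $D=P$ by~\eqref{eqn:D}. Consequently $S^{-1}(H+M^*)=\id$, the quantity $V$ in~\eqref{eqn:-3} equals the numerator of~\eqref{eq:alpha}, so $\alpha_n=\lambda_n$, and \Cref{Algorithm-2} reduces to the iteration displayed immediately before \Cref{prop:condat}, i.e.\ to \Cref{Algorithm-4}; equivalently it is \Cref{Algorithm-1} run with this $P$, with $K=M$ and $S=P$.

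Granting this, the convergence claims~\eqref{prop:condat:part1}--\eqref{prop:condat:part3} in part~\eqref{prop:condat:1} are \Cref{cor:coco}\eqref{coco:part1}--\eqref{coco:part3} evaluated at $\theta=2$: since $\tfrac{\gamma_2}{4}\theta^2\|L\|^2=\gamma_2\|L\|^2$ there, the values of $\delta$ and the stepsize bounds stated in \Cref{prop:condat}\eqref{prop:condat:part2}--\eqref{prop:condat:part3} are literally those of \Cref{cor:coco}\eqref{coco:part2}--\eqref{coco:part3}, while in~\eqref{prop:condat:part1} the requirement $\delta>0$ built into~\eqref{eq:-7} is exactly~\eqref{eq:panos3}, which in turn forces~\eqref{eq:-41} and hence strong positivity of $P$ and $S$ via \Cref{lem:strongly positive p-1,lem:strongly} and cocoercivity of $C$ via \Cref{lem: cocoercivity lemma}. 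Thus weak convergence in these three cases follows from \Cref{thm:Suppose-that-} applied to \Cref{Algorithm-1}. Case~\eqref{prop:condat:part4} is \Cref{cor:coco}\eqref{coco:part4}: here $P\succeq0$ (which, by~\eqref{eq:-42-42} at $\theta=2$, is exactly $\gamma_1^{-1}\ge\gamma_2\|L\|^2$) but $P$ need not be strongly positive, so one appeals instead to \Cref{thm:Suppose-that--1}; its hypotheses hold because $H=P+M$ has the block lower-triangular structure of~\eqref{eq:blclowerH} with strongly positive diagonal blocks $\gamma_1^{-1}\id$ and $\gamma_2^{-1}\id$, so $(H+A)^{-1}P$ is continuous by \Cref{lem:separable}, and with $h\equiv0$ and $l=\iota_{\{0\}}$ the iteration is exactly~\eqref{Algorithm-Pi}--\eqref{Algorithm-Pii}.

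For the rates, part~\eqref{prop:condat:2}: in cases~\eqref{prop:condat:part1}--\eqref{prop:condat:part3} use $D=P=S$, so~\eqref{eqnn:-12} holds with $c_1=c_2=1$ and~\eqref{eqnn:-10} coincides with~\eqref{eq:-7}; \Cref{Thm: conv-rates}\eqref{thm2-part2}, under the extra assumption $(\lambda_n(\delta-\lambda_n))_{n\in\Nn}\subseteq[\underline{\tau},\infty[$, then gives $\|\tilde z_n\|_D^2\le\tfrac{1}{\underline{\tau}(n+1)}\|z_0-z^\star\|_S^2$ together with $\|\tilde z_n\|_D^2=o(1/(n+1))$, which is the asserted estimate once $\|\cdot\|_D=\|\cdot\|_S=\|\cdot\|_P$ is substituted. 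In case~\eqref{prop:condat:part4} one invokes \Cref{thm:Suppose-that--1}\eqref{thm-P-part4}, which is exactly~\eqref{positiv-P-rate} with $Q$ the orthogonal projection onto $\ran(P)$ and $R=P+\id-Q$. I expect the only non-routine point, and the reason the separate machinery of \Cref{thm:Suppose-that--1} is invoked at all, to be the boundary case $\gamma_1^{-1}=\gamma_2\|L\|^2$ in~\eqref{prop:condat:part4}, where $P$ fails to be strongly positive and the Fej\'er-monotonicity argument behind \Cref{thm:Suppose-that-}/\Cref{Thm: conv-rates} is no longer directly available; everything else is substitution of $\theta=2$ into identities already established.
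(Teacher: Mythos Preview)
Your proposal is correct and follows exactly the route the paper indicates: the paper states that \Cref{prop:condat} is a direct consequence of \Cref{cor:coco}, \Cref{Thm: conv-rates} and \Cref{thm:Suppose-that--1} and omits the proof, and you have spelled out precisely why this is so, including the key simplification that at $\theta=2$ one has $K=M$, $H+M^*=P$, and $S=D=P$, so that $\alpha_n=\lambda_n$ and $c_1=c_2=1$ in~\eqref{eqnn:-12}. Your treatment is in fact more explicit than the paper's, which simply asserts the reduction without recording these identities.
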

 \begin{rem}
The conditions in~\Cref{prop:condat}\eqref{prop:condat:part1} are less restrictive than the conditions of \cite[Theorem 3.1]{vu2013splitting}. In the case when $l=\iota_{\{0\}}$, a less conservative condition was proposed in~\cite[theorem 3.1]{condat2013primal}, which is also more conservative than \Cref{prop:condat}\eqref{prop:condat:part2}. The difference is that in the latter $\gamma_{1}^{-1}-{\gamma_{2}}\|L\|^{2}$ needs to be larger than $\beta_h/4$ rather than larger or equal to $\beta_h/2$. In the special case when $\mathcal{K}$ is finite-dimensional, \Cref{prop:condat}\eqref{prop:condat:part4} coincides with ~\cite[theorem 3.3]{condat2013primal}. \Cref{prop:condat}\eqref{prop:condat:2} Further provides big-$O$ and little-$o$ convergence rates by only requiring $(\lambda_n(2-\lambda_n))_{n\in\Nn}\subseteq[\underline{\tau},\infty[$ for some $\underline{\tau}>0$. 
 \end{rem}

\subsection{Special Case \texorpdfstring{$\theta=0$, $\mu=\frac{1}{2}$}{ }}
Another special case of \Cref{Algorithm-2} generalizes the algorithm proposed by Brice{\~n}o-Arias and Combettes in \cite{briceno2011monotone+}. Specifically, setting $\theta=0$ in \Cref{Algorithm-2}, the Gauss-Seidel-type proximal updates are lost but positive definiteness of $P$ boils down to $\gamma_{i}>0$, for $i=1,2$. In addition, we set $\mu=\frac{1}{2}$ and select $\lambda_n$ such that the algorithm converges with constant stepsize $\alpha_n=1$ (see the proof of \Cref{lem:mu=00003D1/2}). This leads to~\Cref{Algorithm-4}: 
\begin{algorithm}[H]
    \caption{}
	\label{Algorithm-4}
    \begin{algorithmic} 
            \item\textbf{Inputs:} $x_0\in\HH$, $y_0\in\mathcal{G}$
            \For{$n=0,\ldots$} 
                \State $\bar{x}_{n}  =\prox_{\gamma_{1}f}(x_{n}-\gamma_{1}L^{*}y_{n}-\gamma_{1}\nabla h(x_{n}))$
                \State $\bar{y}_{n}  =\prox_{\gamma_{2}g^{*}}(y_{n}+\gamma_{2}Lx_{n}-\gamma_{2}\nabla l^{*}(y_{n}))$
                \State $\tilde{x}_{n}  =\bar{x}_{n}-x_{n},\tilde{y}_{n}=\bar{y}_{n}-y_{n}$
                \State $x_{n+1}  =\bar{x}_{n}-\gamma_{1}L^{*}\tilde{y}_{n}$
                \State $y_{n+1} =\bar{y}_{n}+\gamma_{2}L\tilde{x}_{n}$
            \EndFor
    \end{algorithmic}
\end{algorithm}
The convergence results for~\Cref{Algorithm-4} are stated in the following proposition. 
\begin{prop}
\label{lem:mu=00003D1/2}
Consider the sequences $(x_{n})_{n\in\Nn}$ and $(y_{n})_{n\in\Nn}$ generated
by \Cref{Algorithm-4}. Let $\beta_{h}\in]0,+\infty[$ and $\beta_{l}\in]0,+\infty[$ be the
Lipschitz constants of $\nabla h$ and $\nabla l^{*}$, respectively. Suppose that one of the following holds: 
\begin{enumerate}[{label=(\textit{\roman*})},ref=\textit{\roman{*}}]
\item \label{lem:mu=00003D1/2:part1} $\beta=\tau \min \{\frac{1}{\beta_{l}},\frac{1}{\beta_{h}}\}$ with $\tau$ defined in \eqref{eq:-42-42} and
\begin{equation} \label{eq:-42-42-42-42}
\gamma_{1}^{-1}-\gamma_{2}\|L\|^{2}>\frac{1}{2\beta\gamma_{1}}.
\end{equation}
\item \label{lem:mu=00003D1/2:part2} $l=\iota_{\{0\}}$ and  
\begin{align} \label{eq:-1-1-1}
\gamma_{1}^{-1}-\gamma_{2}\|L\|^{2}>\frac{\beta_{h}}{2}. 
\end{align}
\item \label{lem:mu=00003D1/2:part3} $l=\iota_{\{0\}}$, $h\equiv0$ and 
$\gamma_{1}^{-1}-\gamma_{2}\|L\|^{2}>0$.
\end{enumerate}
Then there exists a pair of solutions
$(x^{\star},y^{\star})\in\mathcal{K}$ to 
\eqref{eq:primal-dual} such that the sequences
$(x_{n})_{n\in\Nn}$ and $(y_{n})_{n\in\Nn}$ converge weakly to $x^{\star}$ and $y^{\star}$, respectively.
\end{prop}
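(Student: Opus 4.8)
The plan is to obtain \Cref{Algorithm-4} as the instance of AFBA — in the form of \Cref{Algorithm-2} derived from \Cref{Algorithm-1} — corresponding to $\theta=0$, $\mu=\tfrac12$, and a choice of $(\lambda_n)_{n\in\Nn}$ that forces the stepsize to $\alpha_n\equiv1$, and then to invoke \Cref{thm:Suppose-that-} after verifying its hypotheses in each of the three regimes \eqref{lem:mu=00003D1/2:part1}--\eqref{lem:mu=00003D1/2:part3}.

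First I would specialize the operators. For $\theta=0$ the operator $P$ in \eqref{eq:-17-1} is block-diagonal, $P:(x,y)\mapsto(\gamma_1^{-1}x,\gamma_2^{-1}y)$, hence strongly positive with parameter $\min\{\gamma_1^{-1},\gamma_2^{-1}\}$ for all $\gamma_1,\gamma_2>0$; moreover $K=0$, $H=P$, and since condition \eqref{eq:-41} reduces to $\gamma_1^{-1}>0$, \Cref{lem:strongly} guarantees that $S$ (taken as in \eqref{eqn:S} with $\mu=\tfrac12$) and $D$ in \eqref{eqn:D} are strongly positive. Substituting $\theta=0$, $\mu=\tfrac12$ into the iteration of \Cref{Algorithm-2} and into \eqref{eq:alpha}--\eqref{eqn:-3}, the inner-product term of $V$ drops out because $(1-\mu)(1-\theta)-\mu=0$, so that
\[
\alpha_n=\lambda_n\,\frac{\gamma_1^{-1}\|\tilde x_n\|^2+\gamma_2^{-1}\|\tilde y_n\|^2}{\gamma_1^{-1}\|\tilde x_n\|^2+\gamma_2^{-1}\|\tilde y_n\|^2+\gamma_2\|L\tilde x_n\|^2+\gamma_1\|L^*\tilde y_n\|^2}.
\]
Choosing $\lambda_n$ to be the reciprocal of this fraction makes $\alpha_n=1$, and for $\alpha_n=1$ the last two lines of \Cref{Algorithm-2} collapse to $x_{n+1}=\bar x_n-\gamma_1L^*\tilde y_n$ and $y_{n+1}=\bar y_n+\gamma_2L\tilde x_n$, i.e.\ exactly \Cref{Algorithm-4} (with $\lambda_n$ chosen arbitrarily, say $1$, at the fixed points where $\tilde z_n=0$).

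Next I would bound this $(\lambda_n)_{n\in\Nn}$. Since $\lambda_n=1+\bigl(\gamma_2\|L\tilde x_n\|^2+\gamma_1\|L^*\tilde y_n\|^2\bigr)\big/\bigl(\gamma_1^{-1}\|\tilde x_n\|^2+\gamma_2^{-1}\|\tilde y_n\|^2\bigr)\ge1$, and using $\|L\tilde x_n\|\le\|L\|\,\|\tilde x_n\|$, $\|L^*\tilde y_n\|\le\|L\|\,\|\tilde y_n\|$ together with the elementary inequality $\tfrac{au+bv}{cu+dv}\le\max\{a/c,\,b/d\}$ (for $u,v\ge0$, $c,d>0$), one gets $\lambda_n\le1+\gamma_1\gamma_2\|L\|^2$; hence $(\lambda_n)_{n\in\Nn}\subseteq[1,1+\gamma_1\gamma_2\|L\|^2]$. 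It then remains, in each case, to identify the relevant cocoercivity constant $\beta$ and the associated $\delta=2-\tfrac{1}{2\beta}$ via \Cref{lem: cocoercivity lemma} (using $\tau=\min\{\gamma_1^{-1},\gamma_2^{-1}\}$ for $\theta=0$, and $\delta=2$ when $C\equiv0$), and to check that the stated stepsize hypothesis is precisely the inequality $1+\gamma_1\gamma_2\|L\|^2<\delta$: in case \eqref{lem:mu=00003D1/2:part1} this reads $\gamma_1^{-1}-\gamma_2\|L\|^2>\tfrac{1}{2\beta\gamma_1}$ with $\beta=\tau\min\{\beta_l^{-1},\beta_h^{-1}\}$; in case \eqref{lem:mu=00003D1/2:part2}, with $l=\iota_{\{0\}}$ and $\beta=\beta_h^{-1}\gamma_1^{-1}$ from \eqref{eq:-30}, it reads $\gamma_1^{-1}-\gamma_2\|L\|^2>\tfrac{\beta_h}{2}$; and in case \eqref{lem:mu=00003D1/2:part3}, with $C\equiv0$ and $\delta=2$, it reads $\gamma_1^{-1}-\gamma_2\|L\|^2>0$. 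Once $[1,1+\gamma_1\gamma_2\|L\|^2]\subset\,]0,\delta[$, the continuous function $t\mapsto t(\delta-t)$ attains a strictly positive minimum on this compact interval, so $(\lambda_n)_{n\in\Nn}\subseteq[0,\delta]$ and $\liminf_n\lambda_n(\delta-\lambda_n)>0$, i.e.\ \eqref{eq:-7} holds; the same inequalities also force $\beta>\tfrac14$ in cases \eqref{lem:mu=00003D1/2:part1}--\eqref{lem:mu=00003D1/2:part2}, so \Cref{assumption-1} is satisfied. \Cref{thm:Suppose-that-} then yields that $\bigl((x_n,y_n)\bigr)_{n\in\Nn}$ converges weakly to some $(x^\star,y^\star)\in\zer(A+M+C)$, which by \eqref{eq:Aopt}--\eqref{eq:Copt} is a primal--dual solution of \eqref{eq:primal-dual}; in particular $x_n\rightharpoonup x^\star$ and $y_n\rightharpoonup y^\star$.

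I expect the only delicate part to be this last bookkeeping: confirming that each of the three displayed parameter conditions is literally equivalent to $1+\gamma_1\gamma_2\|L\|^2<\delta$ for the correct $\delta$, and keeping straight which cocoercivity estimate of \Cref{lem: cocoercivity lemma} is in force (general $l$ versus $l=\iota_{\{0\}}$ versus $h\equiv0$, including that $C\equiv0$ in the last case so that $\delta=2$ from the final clause of \Cref{thm:Suppose-that-}). Everything else is a routine specialization of \Cref{thm:Suppose-that-} together with the elementary bound on $(\lambda_n)_{n\in\Nn}$.
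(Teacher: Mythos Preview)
Your proposal is correct and follows essentially the same approach as the paper: specialize \Cref{Algorithm-2} to $\theta=0$, $\mu=\tfrac12$, choose $\lambda_n=\|\tilde z_n\|_D^2/\|\tilde z_n\|_P^2$ so that $\alpha_n\equiv1$, bound $\lambda_n$ by $1+\gamma_1\gamma_2\|L\|^2$, and verify that each of the three stated conditions is exactly $1+\gamma_1\gamma_2\|L\|^2<\delta$ for the appropriate $\delta$. Your explicit lower bound $\lambda_n\ge1$ and the compact-interval argument for $\liminf\lambda_n(\delta-\lambda_n)>0$ are slightly cleaner than the paper's use of auxiliary constants $\nu_1,\nu_2$, but the substance is identical.
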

\begin{proof}
See \hyperref[proof of lem:mu=00003D1/2]{Appendix.}
\end{proof}
\begin{rem} \label{rem:-1}
We can derive $o(1/(n+1))$ convergence rate for~\Cref{Algorithm-4} using \Cref{Thm: conv-rates}. It must be noted that this leads to more restrictive conditions compared to the ones required for convergence in~\Cref{lem:mu=00003D1/2}. It was shown in the proof of \Cref{lem:mu=00003D1/2} that $(\lambda_n(\delta-\lambda_n))_{n\in\Nn}\subseteq[\underline{\tau},\infty[$ for some $\underline{\tau}>0$. Furthermore, $P$ and $D$ are given by~\eqref{eq:-17-1} and~\eqref{eqn:D} with $\theta=0$ and $\mu=\frac{1}{2}$, and it's easy to verify that~\eqref{eqnn:-12} holds for $c_1=1$ and $c_2=1+\gamma_1\gamma_2\|L\|^2$.
According to~\Cref{Thm: conv-rates}, the $o(1/(n+1))$ convergence rate is achieved if   $(\lambda_{n})_{n\in\Nn}\subseteq\left[0,c_1\delta/c_2\right]$. 
By substituting $c_1$ and $c_2$, we derive the following sufficient condition
\begin{equation} \label{eqnn:-36}
(1+\gamma_{1}\gamma_{2}\|L\|^{2})^{2}\leq2-\tfrac{1}{2\beta},
\end{equation}
where $\beta$ is defined in~\eqref{eq:-42-43}, and in the special case when $l=\iota_{\{0\}}$, it can be replaced by the simpler term in~\eqref{eq:-30}.
\end{rem}
\begin{rem}
In \Cref{Algorithm-4}, if $\gamma_{1}=\gamma_{2}=\gamma$, $h\equiv0$ and $l=\iota_{\{0\}}$, the error-free version of the algorithm proposed by Brice{\~n}o-Arias and Combettes  \cite[Algorithm (4.8)]{briceno2011monotone+} is recovered. Furthermore, it follows from~\eqref{eqnn:-36} that $o(1/(n+1))$ convergence rate is guaranteed if 
\[
\gamma^2\|L\|^2\leq {\sqrt{2}-1}.
\] 
It is worth mentioning that another splitting was proposed by Combettes and Pesquet in \cite[Theorem 4.2]{combettes2012primal} that solves the minimization problem~\eqref{eq:-21-1}. The aforementioned algorithm requires two evaluations $\nabla h$ and $\nabla l^*$ per iteration. 
\end{rem}

\subsection{\label{sub:theta=00003D1}Special Case \texorpdfstring{$\theta=1$, $\mu=1$}{}}
As another special case, we derive an algorithm that generalizes the scheme proposed recently by Drori and Sabach and Teboulle in~\cite{drori2015simple}. 
When $\theta=1$, \Cref{Algorithm-2} essentially leads to Arrow-Hurwicz updates. 
In addition, we set $\mu=1$, and select $\lambda_n$ in order to have $\alpha_n=1$ (see proof of \Cref{lem:In-Algorithm-of}). This simplifies the iterations and results in \Cref{Algorithm-3}:
\begin{algorithm}[H]
    \caption{}
	\label{Algorithm-3}
    \begin{algorithmic} 
            \item\textbf{Inputs:} $x_0\in\HH$, $y_0\in\mathcal{G}$
            \For{$n=0,\ldots$} 
                \State $\bar{x}_{n} =\prox_{\gamma_{1}f}(x_{n}-\gamma_{1}L^{*}y_{n}-\gamma_{1}\nabla h(x_{n}))$
                \State $y_{n+1} =\prox_{\gamma_{2}g^{*}}(y_{n}+\gamma_{2}L\bar{x}_{n}-\gamma_{2}\nabla l^{*}(y_{n}))$
                \State $x_{n+1} =\bar{x}_{n}-\gamma_{1}L^{*}(y_{n+1}-y_{n})$
            \EndFor
    \end{algorithmic}
    \end{algorithm}
In the next proposition we establish  convergence results for \Cref{Algorithm-3}. The proof is similar to that of \Cref{lem:mu=00003D1/2} and is omitted here.
\begin{prop}
\label{lem:In-Algorithm-of}Consider the sequences $(x_{n})_{n\in\Nn}$ and $(y_{n})_{n\in\Nn}$ generated
by \Cref{Algorithm-3}. Let $\beta_{h}\in]0,+\infty[$ and $\beta_{l}\in]0,+\infty[$ be the
Lipschitz constants of $\nabla h$ and $\nabla l^{*}$, respectively. Suppose that one of the following holds: 
 \begin{enumerate} [{label=(\textit{\roman*})},ref=\textit{\roman{*}}]
 \item \label{lem:In-Algorithm-of:part1}$\beta=\tau \min \{\frac{1}{\beta_{l}},\frac{1}{\beta_{h}}\}$ where $\tau$ is defined in \eqref{eq:-42-42} and
\begin{enumerate} [{label=(\textit{\alph*})},ref=\textit{\alph{*}}]
\item \label{ass:-8-1}  $2\beta>1$,
\item \label{ass:-2-3} $\gamma_{1}^{-1}-\gamma_{2}\left(1+\frac{1}{2(2\beta-1)}\right)^{2}\|L\|^{2}>0$.
\end{enumerate}
\item \label{lem:In-Algorithm-of:part2} $l=\iota_{\{0\}}$ and 
 \begin{equation} \beta_{h}\gamma_{1}<2-\gamma_{1}\gamma_{2}\|L\|^{2}-\sqrt{\gamma_{1}\gamma_{2}\|L\|^{2}}. \label{eq:-42-39}
 \end{equation}
 \item \label{lem:In-Algorithm-of:part3} $l=\iota_{\{0\}}$, $h\equiv0$ and 
 \begin{equation*} 
\gamma_{1}^{-1}-\gamma_{2}\|L\|^{2}>0.
 \end{equation*}
\end{enumerate}
 Then there exists a pair of solutions
$(x^{\star},y^{\star})\in\mathcal{K}$ to 
\eqref{eq:primal-dual} such that the sequences
$(x_{n})_{n\in\Nn}$ and $(y_{n})_{n\in\Nn}$ converge weakly to $x^{\star}$ and $y^{\star}$, respectively.
\end{prop}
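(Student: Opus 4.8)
The plan is to recognize \Cref{Algorithm-3} as the instance $\theta=1$, $\mu=1$ of \Cref{Algorithm-2} in which $(\lambda_n)_{n\in\Nn}$ is chosen so that the step length in \eqref{eq:alpha} is constantly $\alpha_n\equiv1$, and then to verify that the resulting $(\lambda_n)_{n\in\Nn}$ satisfies condition \eqref{eq:-7} of \Cref{thm:Suppose-that-} in each of the three regimes. First I would substitute $\theta=1$, $\mu=1$ into \Cref{Algorithm-2}: the proximal updates become $\bar x_n=\prox_{\gamma_1 f}(x_n-\gamma_1 L^*y_n-\gamma_1\nabla h(x_n))$ and $\bar y_n=\prox_{\gamma_2 g^*}(y_n+\gamma_2 L\bar x_n-\gamma_2\nabla l^*(y_n))$, while the two correction steps collapse to $x_{n+1}=x_n+\alpha_n(\tilde x_n-\gamma_1 L^*\tilde y_n)$ and $y_{n+1}=y_n+\alpha_n\tilde y_n$; imposing $\alpha_n=1$ forces $y_{n+1}=\bar y_n$ and $x_{n+1}=\bar x_n-\gamma_1 L^*(y_{n+1}-y_n)$, which is exactly \Cref{Algorithm-3}. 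By \eqref{eq:alpha}, $\alpha_n=1$ is equivalent to $\lambda_n=V(\tilde x_n,\tilde y_n)/\langle\tilde z_n,P\tilde z_n\rangle$; since $\mu=1$, the operator $D$ of \eqref{eq:D} coincides with $D_1$ from \eqref{eq:-5} evaluated at $\theta=1$, and $V(\tilde x_n,\tilde y_n)=\langle\tilde z_n,D\tilde z_n\rangle$, so $\lambda_n=\langle\tilde z_n,D\tilde z_n\rangle/\langle\tilde z_n,P\tilde z_n\rangle$ is a generalized Rayleigh quotient of $(D,P)$, with $P$ from \eqref{eq:-17-1} at $\theta=1$.

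The core of the argument is to sandwich $c_1 P\preceq D\preceq c_2 P$ with $0<c_1\le c_2<\delta$. In each case the hypotheses force $\gamma_1^{-1}-\tfrac{\gamma_2}{4}\|L\|^2>0$, so \Cref{lem:strongly positive p-1,lem:strongly} make $P$, $D$ and $S$ strongly positive; strong positivity of $D$ already yields some $c_1>0$ (its exact value does not matter). For the upper bound I would use the identity $\langle z,Dz\rangle-\langle z,Pz\rangle=-\langle Lx,y\rangle+\gamma_1\|L^*y\|^2$ for $z=(x,y)$, together with a suitably weighted Young inequality applied to $\langle Lx,y\rangle=\langle x,L^*y\rangle$ and the bound $\|L^*y\|\le\|L\|\,\|y\|$, to obtain $D\preceq c_2 P$ with $c_2=2\big/\!\left(2-\sqrt{\gamma_1\gamma_2}\,\|L\|\right)$ whenever $\sqrt{\gamma_1\gamma_2}\,\|L\|<1$. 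Consequently $\lambda_n\in[c_1,c_2]$ for every $n$, so that $(\lambda_n)_{n\in\Nn}\subseteq[0,\delta]$ and, by concavity of $\lambda\mapsto\lambda(\delta-\lambda)$, $\liminf_n\lambda_n(\delta-\lambda_n)\ge\min\{c_1(\delta-c_1),c_2(\delta-c_2)\}>0$ as soon as $c_2<\delta$; in particular $\delta>0$ is then automatic.

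It remains to check $c_2<\delta$ under each set of hypotheses. In case \eqref{lem:In-Algorithm-of:part1}, $C$ is cocoercive with $\beta=\tau\min\{\beta_l^{-1},\beta_h^{-1}\}$ by \Cref{lem: cocoercivity lemma} and \eqref{eq:-42-43}, so $\delta=2-\tfrac{1}{2\beta}$; condition \eqref{ass:-8-1} gives $2\beta>1$ (hence $\delta>0$), and a short rearrangement shows that \eqref{ass:-2-3} is equivalent to $\sqrt{\gamma_1\gamma_2}\,\|L\|<\tfrac{2\left(1-1/(2\beta)\right)}{2-1/(2\beta)}$, which is precisely $\tfrac{2}{2-\sqrt{\gamma_1\gamma_2}\|L\|}<2-\tfrac{1}{2\beta}$, i.e.\ $c_2<\delta$. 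In case \eqref{lem:In-Algorithm-of:part2} with $l=\iota_{\{0\}}$, \eqref{eq:-30} gives $\beta=\beta_h^{-1}\big(\gamma_1^{-1}-\tfrac{\gamma_2}{4}\|L\|^2\big)$; writing $t=\sqrt{\gamma_1\gamma_2}\,\|L\|$ and using $4-t^2=(2-t)(2+t)$, one checks $c_2<\delta\iff\tfrac{2}{2-t}<2-\tfrac{2\beta_h\gamma_1}{4-t^2}\iff\beta_h\gamma_1<2-t^2-t$, which is exactly \eqref{eq:-42-39}. In case \eqref{lem:In-Algorithm-of:part3}, $C\equiv0$ so $\delta=2$, and $c_2<2\iff t<1\iff\gamma_1^{-1}-\gamma_2\|L\|^2>0$.

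In all three cases \eqref{eq:-7} holds, so \Cref{thm:Suppose-that-} applies (with $C\equiv0$ and $\delta=2$ in case \eqref{lem:In-Algorithm-of:part3}) and yields that $(z_n)_{n\in\Nn}=\big((x_n,y_n)\big)_{n\in\Nn}$ converges weakly to some $z^\star=(x^\star,y^\star)\in\zer(A+M+C)$; by \eqref{eq:primal-dual} this pair is a primal-dual solution, giving weak convergence of $(x_n)_{n\in\Nn}$ and $(y_n)_{n\in\Nn}$ to $x^\star$ and $y^\star$ respectively. The step I expect to be the main obstacle is the estimate $D\preceq c_2 P$: the Young weight must be chosen optimally so that the bound $c_2=2/(2-\sqrt{\gamma_1\gamma_2}\|L\|)$ is sharp enough to reproduce the exact thresholds \eqref{ass:-2-3} and \eqref{eq:-42-39}, rather than a strictly more conservative condition.
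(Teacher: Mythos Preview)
Your proposal is correct and follows essentially the same route as the paper. The paper states that the proof ``is similar to that of \Cref{lem:mu=00003D1/2} and is omitted,'' and that template is exactly what you execute: recognize \Cref{Algorithm-3} as \Cref{Algorithm-2} with $\theta=1$, $\mu=1$, force $\alpha_n\equiv1$ by taking $\lambda_n=\|\tilde z_n\|_D^2/\|\tilde z_n\|_P^2$, and then verify that this $\lambda_n$ is uniformly bounded in $]0,\delta[$ so that \Cref{thm:Suppose-that-} applies.

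The only cosmetic difference is how the upper bound on $\lambda_n$ is organized. The paper (in its proof of \Cref{lem:mu=00003D1/2} and in the omitted analogue here) writes out $\lambda_n$ as an explicit ratio and bounds it term by term via a Fenchel--Young inequality with a hand-tuned $\epsilon$; you instead package the same computation as an operator inequality $D\preceq c_2 P$ with $c_2=2/(2-\sqrt{\gamma_1\gamma_2}\,\|L\|)$ and then check $c_2<\delta$ case by case. Your $c_2$ is precisely the constant the paper records in the remark following \Cref{lem:In-Algorithm-of} (for the rate analysis), and your algebraic verifications that $c_2<\delta$ reproduce exactly \eqref{ass:-2-3}, \eqref{eq:-42-39}, and $\gamma_1^{-1}-\gamma_2\|L\|^2>0$ in the three respective cases. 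The two presentations are equivalent; yours is arguably cleaner because it makes transparent that the whole argument is a generalized Rayleigh-quotient bound.
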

\begin{rem}
In \Cref{rem:-1} we deduced $o(1/(n+1))$ convergence rate for~\Cref{Algorithm-4} using \Cref{Thm: conv-rates}. We can similarly deduce the same rate for~\Cref{Algorithm-3}. However, also in this case \Cref{Thm: conv-rates} imposes more restrictive conditions compared to the ones required for convergence in~\Cref{lem:In-Algorithm-of}. 
It is easy to verify that in this case~\eqref{eqnn:-12} holds for 
$c_1=2/(2+\sqrt{\gamma_{1}\gamma_{2}\|L\|^{2}})$, $c_2=2/(2-\sqrt{\gamma_{1}\gamma_{2}\|L\|^{2}})$.
According to \Cref{Thm: conv-rates} we have $o(1/(n+1))$ convergence rate if  $(\lambda_{n})_{n\in\Nn}\subseteq\left[0,c_1\delta/c_2\right]$. With $c_1$ and $c_2$ defined as above we derive the following sufficient condition
\begin{equation} \label{eqnn:-tebl-conv1}
\frac{1}{\beta}<4-\tfrac{4\left(2+\sqrt{\gamma_1\gamma_2\|L\|^2}\right)}{\left(2-\sqrt{\gamma_1\gamma_2\|L\|^2}\right)^2},
\end{equation}
with $\beta=\tau \min \{\frac{1}{\beta_{l}},\frac{1}{\beta_{h}}\}$, where $\tau$ is defined in \eqref{eq:-42-42}. For $l=\iota_{\{0\}}$,~\eqref{eqnn:-tebl-conv1} becomes
\begin{equation*} 
	\beta_h\gamma_1<4-\gamma_1\gamma_2\|L\|^2-\tfrac{\left(2+\sqrt{\gamma_1\gamma_2\|L\|^2}\right)^3}{4-\gamma_1\gamma_2\|L\|^2}.
\end{equation*}
If in addition $h\equiv 0$, 
it further simplifies to  $\gamma_1\gamma_2\|L\|^2< \frac{21-5\sqrt{17}}{2}=0.192$.
\end{rem}
\begin{rem}
	\Cref{Algorithm-3} with $f\equiv^{}0$ and $l=\iota_{\{0\}}$ reduces to the algorithm proposed by Drori, Sabach and Teboulle in~\cite{drori2015simple}. The authors require  
	$\beta_{h}\gamma_{1}\leq1$, $\gamma_{1}\gamma_{2}\|L\|^{2}\leq1$,
	which is less restrictive that~\eqref{eq:-42-39}. 
\end{rem}

\subsection{Special Case: \texorpdfstring{$\mu=0$}{}} \label{DR-ADMM}
In this section we discuss three algorithms that are all special cases of  \Cref{Algorithm-2} when $\mu=0$.  \Cref{Algorithm-6b} was already discovered in \Cref{sec:special cases}. Here we briefly restate it in the optimization framework. \Cref{Algorithm-7} is an application of \Cref{Algorithm-6b} to a dual problem formulation.  \Cref{Algorithm-5} is derived from \Cref{Algorithm-2} when $\mu=0$, $h\equiv0$ and $l=\iota_{\{0\}}$, by selecting $\lambda_n$ in order to have constant stepsize $\alpha_n=1$. A more general algorithm can be considered that includes $h$ and $l$, but this would complicate the conditions for selecting the parameters.
\begin{algorithm}[H]
    \caption{}
	\label{Algorithm-5}
    \begin{algorithmic} 
            \item\textbf{Inputs:} $x_0\in\HH$, $y_0\in\mathcal{G}$
            \For{$n=0,\ldots$} 
                \State $x_{n+1}  =\prox_{\gamma_{1}f}(x_{n}-\gamma_{1}L^{*}y_{n})$
                \State $\bar{y}_{n} =\prox_{\gamma_{2}g^{*}}(y_{n}+\gamma_{2}L((1-\theta)x_{n}+\theta x_{n+1}))$
                \State $y_{n+1} =\bar{y}_{n}+\gamma_{2}(2-\theta)L(x_{n+1}-x_{n})$
            \EndFor
    \end{algorithmic}
\end{algorithm}
The next proposition provides the convergence results for~\Cref{Algorithm-5} through the same reasoning used in~\Cref{lem:In-Algorithm-of,lem:mu=00003D1/2}.

\begin{prop}
	\label{lem:Let--be} Assume that $h\equiv 0$ and $l=\iota_{\{0\}}$. Consider the sequences $(x_{n})_{n\in\Nn}$ and $(y_{n})_{n\in\Nn}$ generated
	by \Cref{Algorithm-5}. Let $\theta\in[0,\infty[$ and suppose that one of the following holds:
	\begin{enumerate}[{label=(\textit{\roman*})},ref=\textit{\roman{*}}]
		\item \label{prop4:part1} 
		$\gamma_{1}^{-1}-\gamma_{2}(\theta^{2}-3\theta+3)\|L\|^{2}>0$,
		\item $\theta=2$,  $\mathcal{K}$ is finite-dimensional and
		$\gamma_{1}^{-1}-\gamma_{2}\|L\|^{2}\geq0$.   
	\end{enumerate}
	Then there exists a pair of solutions $(x^{\star},y^{\star})\in\mathcal{K}$ to 
	\eqref{eq:primal-dual} such that the sequences $(x_{n})_{n\in\Nn}$ and $(y_{n})_{n\in\Nn}$ converge weakly to $x^{\star}$ and $y^{\star}$, respectively.
\end{prop}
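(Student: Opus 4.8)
The plan is to recognise \Cref{Algorithm-5} as the instance of \Cref{Algorithm-2} obtained by setting $\mu=0$, $h\equiv 0$, $l=\iota_{\{0\}}$ and then choosing the free sequence $(\lambda_n)_{n\in\Nn}$ so that the stepsize produced by~\eqref{eq:alpha} is $\alpha_n\equiv 1$; once this identification is made, case~(i) follows from \Cref{cor:coco}\eqref{coco:part3} and case~(ii) from \Cref{cor:coco}\eqref{coco:part4}, provided the resulting $(\lambda_n)_{n\in\Nn}$ is admissible. Concretely, with $h\equiv0$, $l=\iota_{\{0\}}$ we have $\nabla h=0$, $\nabla l^{*}=0$, $g\oblong l=g$ and $C=0$, so \Cref{Algorithm-2} reduces to $\bar x_n=\prox_{\gamma_1 f}(x_n-\gamma_1 L^{*}y_n)$, $\bar y_n=\prox_{\gamma_2 g^{*}}(y_n+\gamma_2 L((1-\theta)x_n+\theta\bar x_n))$, $x_{n+1}=x_n+\alpha_n\tilde x_n$, $y_{n+1}=y_n+\alpha_n(\gamma_2(2-\theta)L\tilde x_n+\tilde y_n)$, where by~\eqref{eqn:-3},~\eqref{eqn:D} (with $\mu=0$, so that $D=D_2$ from~\eqref{eq:-6}) the numerator and $V(\tilde x_n,\tilde y_n)$ in~\eqref{eq:alpha} are $\|\tilde z_n\|_P^2$ and $\langle\tilde z_n,D_2\tilde z_n\rangle$, with $\tilde z_n=(\tilde x_n,\tilde y_n)$. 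Hence the choice $\lambda_n=\langle\tilde z_n,D_2\tilde z_n\rangle/\|\tilde z_n\|_P^2$ (when $\tilde z_n\neq 0$; otherwise $\bar z_n=z_n\in\zer(T)$ and the iterates are stationary) gives $\alpha_n=1$, whereupon $x_{n+1}=\bar x_n$ and $y_{n+1}=\bar y_n+\gamma_2(2-\theta)L(x_{n+1}-x_n)$, i.e.\ exactly \Cref{Algorithm-5} after renaming $\bar x_n$ as $x_{n+1}$.

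For case~(i) it remains to verify that this $(\lambda_n)_{n\in\Nn}$ satisfies~\eqref{eq:-7} with $\delta=2$ (the value for $C=0$). By~\eqref{eqnn:-12}, $\lambda_n\in[c_1,c_2]$ with $c_1P\preceq D_2\preceq c_2P$, and $c_1>0$ by strong positivity of $D_2$. The crux is computing $c_2=\sup_{z\neq 0}\langle z,D_2z\rangle/\langle z,Pz\rangle$. From~\eqref{eq:-6},~\eqref{eq:-17-1} one has $\langle z,D_2z\rangle=\langle z,Pz\rangle+(2-\theta)\big(\gamma_2(1-\theta)\|Lx\|^2+\langle Lx,y\rangle\big)$; the supremum may be restricted to $z=(x,y)$ with $y$ parallel to $Lx$ and $\|Lx\|=\|L\|\|x\|$, so it reduces to the scalar problem of extremising $g(s)=\big(\gamma_2(1-\theta)+s\big)\big/\big((\gamma_1\|L\|^2)^{-1}+\gamma_2^{-1}s^2-\theta s\big)$ over $s\in\Re$. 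A short computation (using $(1-\theta)^2+\theta-\theta^2=1-\theta$ and, at the critical point, $2(1-\theta)+2(\gamma_1\gamma_2\|L\|^2)^{-1}=2w^2$) gives the relevant critical value $\big(2w-(2-\theta)\big)^{-1}$ with $w=\sqrt{1-\theta+(\gamma_1\gamma_2\|L\|^2)^{-1}}$, whence $c_2=2w/(2w-(2-\theta))$ for $\theta<2$, $c_2=1$ for $\theta=2$, and $c_2=2w/(2w+2-\theta)$ for $\theta>2$. In every case $c_2<2\iff w>|2-\theta|\iff(\gamma_1\gamma_2\|L\|^2)^{-1}>\theta^2-3\theta+3\iff\gamma_1^{-1}-\gamma_2(\theta^2-3\theta+3)\|L\|^2>0$, which is the hypothesis of~(i); since $\theta^2-3\theta+3-\tfrac14\theta^2=\tfrac34(\theta-2)^2\ge 0$, this hypothesis also yields~\eqref{eq:-122-P}, so $P$, $S$, $D_2$ are strongly positive by \Cref{lem:strongly positive p-1,lem:strongly}. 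Thus $(\lambda_n)_{n\in\Nn}\subseteq[c_1,c_2]\subset\,]0,2[$, and since $t\mapsto t(2-t)$ is concave, $\liminf_n\lambda_n(2-\lambda_n)\ge\min\{c_1(2-c_1),c_2(2-c_2)\}>0$; so~\eqref{eq:-7} holds and \Cref{cor:coco}\eqref{coco:part3} gives the asserted weak convergence to a primal--dual solution.

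For case~(ii), $\theta=2$ with the boundary value $\gamma_1^{-1}=\gamma_2\|L\|^2$ admitted, $P$ in~\eqref{eq:-17-1} is only positive semidefinite and $\|\cdot\|_P$ degenerates, so instead of \Cref{cor:coco}\eqref{coco:part3} we invoke \Cref{cor:coco}\eqref{coco:part4}, i.e.\ \Cref{thm:Suppose-that--1}. When $\theta=2$, $\mu=0$, formula~\eqref{eqn:-3} collapses to $V(\tilde x_n,\tilde y_n)=\gamma_1^{-1}\|\tilde x_n\|^2+\gamma_2^{-1}\|\tilde y_n\|^2-2\langle\tilde x_n,L^{*}\tilde y_n\rangle$, which coincides with the numerator in~\eqref{eq:alpha}; hence $\alpha_n=\lambda_n$, and the constant choice $\lambda_n=1$ (uniformly bounded in $]0,2[$) reproduces \Cref{Algorithm-5} with $\theta=2$. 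Moreover $H$ in~\eqref{eq:-42-42-42} is block lower triangular with strongly positive diagonal blocks $\gamma_1^{-1}\id$, $\gamma_2^{-1}\id$, so $(H+A)^{-1}P$ is continuous by \Cref{lem:separable}, and $M$ in~\eqref{eq:Mopt} is skew-adjoint; in finite dimensions all hypotheses of \Cref{thm:Suppose-that--1} hold, giving convergence of $(x_n)_{n\in\Nn},(y_n)_{n\in\Nn}$ to a primal--dual solution.

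The main obstacle is the spectral comparison in case~(i): pinning down $c_2$ and showing that $c_2<2$ is \emph{equivalent} to $\gamma_1^{-1}-\gamma_2(\theta^2-3\theta+3)\|L\|^2>0$. This amounts to optimising a ratio of indefinite quadratic forms; the clean route is the reduction to the scalar problem $\sup_s g(s)$ above (justified by aligning $y$ with $Lx$ and taking $\|Lx\|=\|L\|\|x\|$), together with the two algebraic identities that collapse the critical value to $(2w-(2-\theta))^{-1}$. Everything else — the identification of \Cref{Algorithm-5} with the $\alpha_n\equiv1$ specialisation of \Cref{Algorithm-2}, and the invocation of \Cref{cor:coco} — is routine bookkeeping.
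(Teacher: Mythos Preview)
Your proof is correct and follows the same overall strategy as the paper: identify \Cref{Algorithm-5} with \Cref{Algorithm-2} at $\mu=0$, $h\equiv 0$, $l=\iota_{\{0\}}$, choose $\lambda_n=\langle\tilde z_n,D_2\tilde z_n\rangle/\|\tilde z_n\|_P^2$ so that $\alpha_n\equiv 1$, then show $(\lambda_n)_{n\in\Nn}$ is uniformly bounded in $]0,2[$ and appeal to \Cref{thm:Suppose-that-} (via \Cref{cor:coco}) for case~(i) and to \Cref{thm:Suppose-that--1} (via \Cref{cor:coco}\eqref{coco:part4}) for case~(ii).

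The only methodological difference is in the verification that $\lambda_n<2$ uniformly. The paper argues directly that $2\langle\tilde z_n,P\tilde z_n\rangle-\langle\tilde z_n,D_2\tilde z_n\rangle>0$ with a margin: it writes this difference out, applies the Fenchel--Young inequality to the cross term, minimises over~$y$, and observes that the resulting coefficient $\gamma_1^{-1}-\gamma_2(\theta^2-3\theta+3)\|L\|^2$ is precisely the hypothesis~(i); a small slack parameter then gives a uniform bound below $2$. You instead compute the exact generalised eigenvalue $c_2=\sup_{z\neq 0}\langle z,D_2 z\rangle/\langle z,Pz\rangle$ via a reduction to a one-variable optimisation. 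Your route gives more information (the explicit $c_2$) but is heavier; note also that in infinite dimensions the extremiser with $\|Lx\|=\|L\|\|x\|$ need not exist, so your reduction should be read as computing the supremum via an approximating sequence rather than an attained maximum. The paper's Fenchel--Young argument sidesteps this and reaches the same conclusion with less algebra.
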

\begin{rem}
\Cref{Algorithm-6} was introduced in \Cref{sec:special cases} for the monotone inclusion problem~\eqref{eq:DR main}. It can be rediscovered here by considering \Cref{Algorithm-2} with $\mu=0$ and setting $l=\iota_{\{0\}}$, $L$ to be the identity and $\gamma_{1}=\gamma_{2}^{-1}=\gamma$ and following the same algebraic manipulations. For this reason we only state it in the framework of optimization and refer the reader to \Cref{prop:-1} for convergence results. 
\end{rem}
\begin{algorithm}[H]
    \caption{Douglas-Rachford Type with a Forward Term}
	\label{Algorithm-6b}
    \begin{algorithmic} 
            \item\textbf{Inputs:} $x_0\in\HH$, $s_0\in\HH$
            \For{$n=0,\ldots$} 
                \State $\bar{x}_{n} =\prox_{\gamma f}(s_{n}-\gamma\nabla h(x_{n}))$
                \State $r_{n} = \prox_{\gamma g}(\theta \bar{x}_{n}+(2-\theta)x_{n}-s_{n})$
                \State $s_{n+1} =s_{n}+\rho_n(r_{n}-\bar{x}_{n})$
                \State $x_{n+1}=x_n+\rho_n(\bar{x}_n-x_n)$
            \EndFor
    \end{algorithmic}
\end{algorithm}

\subsection*{ADMM Form} 
Consider the following problem 
\begin{subequations} \label{ADMM}
\begin{align}
\underset{x_1,x_2,x_3}{\minimize}\ & \quad f_1(x_1)+f_2(x_2)+f_3(x_3) \\
\stt & \quad L_1 x_1+L_2 x_2 +L_3 x_3 = b,
\end{align}
\end{subequations}
 where $\HH_1,\dots,\HH_4$ are real Hilbert spaces, $f_i\in\Gamma_{0}(\HH_i)$,
$L_i\in\mathscr{B}(\HH_i,\mathcal{\HH}_4)$ for $i=1,2,3$ and $b\in \HH_4$. Additionally, $f_1$
is $\xi$-strongly convex for some $\xi\in]0,+\infty[$.
It is well known that the classic Alternating Direction
Method of Multipliers
(ADMM) is equivalent to  Douglas-Rachford algorithm applied to the dual problem (see \cite{boyd2011distributed} and the references therein). We derive a new 3-block ADMM iteration in a similar way. Consider the dual problem 
\begin{equation}
\underset{y}{\minimize}\ d_1(y)+d_2(y)+d_3(y), \label{dual}
\end{equation}
where $d_1(y)=f^*_{1}(-L^*_{1} y)$, $d_2(y)=f^*_{2}(-L^*_{2} y)$ and $d_3(y)=f^*_{3}(-L^*_{3} y)+\langle y,b \rangle$.  By strong convexity of $f_1$ we have that $d_1$ has a $\xi^{-1}\|L_1\|^2$-Lipschitz gradient. Form the augmented Lagrangian 
\[ 
\mathcal{L}_{\gamma}(x_{1},x_{2},x_{3},y)=\sum_{i=1}^{3}f_{i}(x_{i})+\left\langle y,\sum_{i=1}^{3}L_{i}x_{i}-b\right\rangle+\frac{\gamma}{2}\left\|\sum_{i=1}^{3}L_{i}x_{i}-b\right\|^{2}.
\]
 We apply \Cref{Algorithm-6b} with $\rho_n=1$ to \eqref{dual} and after a change of order and some algebraic manipulations we derive \Cref{Algorithm-7} (the procedure is similar to the one found in \cite[Section 3.5.6]{eckstein1992douglas} for the classic ADMM).  Our 3-block ADMM can be written as
\begin{algorithm}[H]
    \caption{3-block ADMM} \label{Algorithm-7}
    \begin{algorithmic} 
            \item\textbf{Inputs:} $(x_{1,0},x_{2,0},x_{3,0})\in \HH_1 \times \HH_2 \times \HH_3$, $(y_0,y_1)\in\HH_4 \times \HH_4$
            \For{$n=0,\ldots$} 
                \State $\bar{y}_{n}=(\theta-1)y_{n}+(2-\theta)y_{n-1}$
                \State $x_{1,n+1}=\argmin_{x_{1}}\mathcal{L}_{0}(x_{1},x_{2,n},x_{3,n},y_{n})$
                \State $x_{2,n+1}= \argmin_{x_{2}}\mathcal{L}_{\gamma}(x_{1,n},x_{2},x_{3,n},\bar{y}_{n})$
                \State $x_{3,n+1}= \argmin_{x_{3}}\mathcal{L}_{\gamma}(x_{1,n+1},x_{2,n+1},x_{3},\bar{y}_{n})$
                \State ${y}_{n+1}=\bar{y}_{n}+\gamma(L_{1}x_{1,n+1}+L_{2}x_{2,n+1}+L_{3}x_{3,n+1}-b)$
            \EndFor
    \end{algorithmic}
\end{algorithm}
\begin{prop} \label{prop:9}
Let $\HH_1,\dots,\HH_4$ be finite-dimensional,  $f_i\in\Gamma_{0}(\HH_i)$, $b\in \HH_4$, 
$L_i\in\mathscr{B}(\HH_i,\mathcal{\HH}_4)$ for $i=1,2,3$ and $\ker(L_2)=\{0\}$, $\ker(L_3)=\{0\}$. 
Let $\xi\in]0,+\infty[$ be the strong convexity constant of $f_1$. 
Assume that the set of saddle points of~\eqref{ADMM}, denoted by $\Sigma$, is nonempty. Let $\theta\in]1,2[$ and  
\begin{equation}
{\gamma} < \xi(2-\theta)(\theta-\sqrt{2-\theta})/\|L_1\|^2. \label{eqnn:-4}
\end{equation}
  Then the sequence $(x_{1,n},x_{2,n},x_{3,n},y_{n})_{n\in\mathbb{N}}$ generated by \Cref{Algorithm-7},
  converges to some  $(x_{1}^{\star},x_{2}^{\star},x_{3}^{\star},y^{\star})\in\Sigma$. 
  \end{prop}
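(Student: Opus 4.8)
The plan is to realize \Cref{Algorithm-7} as the ADMM form of \Cref{Algorithm-6b} applied to the dual problem \eqref{dual}, and then transfer the conclusions of \Cref{prop:-1}\eqref{prop:-1-part1}. First I would cast \eqref{dual} into the template \eqref{eq:DR main}: set $F=\nabla d_1$ and let $D=\partial d_2$, $E=\partial d_3$ (in whichever order reproduces the block sequence of \Cref{Algorithm-7}), both maximally monotone. Since $f_1$ is $\xi$-strongly convex, $d_1=f_1^*\circ(-L_1^*)$ is everywhere differentiable with $\xi^{-1}\|L_1\|^2$-Lipschitz gradient, hence by the Baillon--Haddad theorem $F$ is $\eta$-cocoercive with $\eta=\xi\|L_1\|^{-2}$. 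I would also note that the standing existence assumption behind \Cref{prop:-1} is met: if $(x_1^\star,x_2^\star,x_3^\star,y^\star)\in\Sigma$, then $-L_i^*y^\star\in\partial f_i(x_i^\star)$ and $\sum_iL_ix_i^\star=b$ give $\nabla d_1(y^\star)=-L_1x_1^\star$, $-L_2x_2^\star\in\partial d_2(y^\star)$ and $b-L_3x_3^\star\in\partial d_3(y^\star)$, whose sum vanishes, so $0\in(\nabla d_1+\partial d_2+\partial d_3)(y^\star)$.

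Next I would check the parameters against \eqref{eq:rho}. Using $4-\theta^2=(2-\theta)(2+\theta)$ and $(2-\theta)(2+\theta)-(2-\theta)(2+\sqrt{2-\theta})=(2-\theta)(\theta-\sqrt{2-\theta})$, the requirement that $\rho_n\equiv1$ lies in the interval of \eqref{eq:rho} is equivalent to $\gamma<\eta(2-\theta)(\theta-\sqrt{2-\theta})$; this bound is positive precisely for $\theta\in]1,2[$, and with $\eta=\xi\|L_1\|^{-2}$ it becomes exactly \eqref{eqnn:-4} (which in turn forces $\gamma\in]0,\eta(4-\theta^2)[$). Hence \Cref{prop:-1}\eqref{prop:-1-part1} applies, and since $\HH_4$ is finite-dimensional it yields \emph{strong} convergence of the \Cref{Algorithm-6b} iterates for \eqref{dual} to a primal--dual solution of \eqref{eq:DR primal-dual}; in particular the dual multiplier sequence of \Cref{Algorithm-7} converges, $y_n\to y^\star$, and all iterates of \Cref{Algorithm-6b} converge.

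Then comes the bookkeeping-heavy identification: \Cref{Algorithm-6b} applied to \eqref{dual}, after the change of variables in $s_n$ and a reordering of the updates, is exactly \Cref{Algorithm-7}, following the classical DRS--ADMM equivalence derived in \cite[Section~3.5.6]{eckstein1992douglas}. The forward evaluation $\nabla d_1(y_n)=-L_1\nabla f_1^*(-L_1^*y_n)=-L_1x_{1,n+1}$, with $x_{1,n+1}=\argmin_{x_1}\mathcal{L}_0(x_1,\cdot,\cdot,y_n)$ (single-valued by strong convexity of $f_1$), produces the $x_1$-update; each $\prox_{\gamma d_i}$, $i=2,3$, unfolds through Moreau's identity into a minimization of $\mathcal{L}_\gamma$ over $x_i$ followed by a multiplier update, which is well posed because $\ker L_i=\{0\}$ makes that subproblem strictly convex and coercive; and $\bar y_n$ records the relaxation variable. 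Finally I would recover the primal limits: continuity of $\nabla f_1^*$ and $y_n\to y^\star$ give $x_{1,n}\to x_1^\star:=\nabla f_1^*(-L_1^*y^\star)$; the affine change of variables writes $L_2x_{2,n}$ and $L_3x_{3,n}$ in terms of the convergent \Cref{Algorithm-6b} iterates, so these images converge, and injectivity of $L_2,L_3$ together with finite dimension lifts this to $x_{2,n}\to x_2^\star$ and $x_{3,n}\to x_3^\star$; passing to the limit in the optimality conditions of the three subproblems and in the feasibility residual $\sum_iL_ix_{i,n+1}-b=\gamma^{-1}(y_{n+1}-\bar y_n)\to0$ then gives $(x_1^\star,x_2^\star,x_3^\star,y^\star)\in\Sigma$.

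The hard part is this last step: pinning down the precise correspondence --- including the reordering of the updates and the change of variables --- between the dual DRS iteration and \Cref{Algorithm-7}, and then using $\ker L_2=\ker L_3=\{0\}$ to pass from convergence of the images $L_ix_{i,n}$ to convergence of the blocks $x_{2,n},x_{3,n}$ themselves; the parameter reduction and the existence check are routine once the setup above is in place.
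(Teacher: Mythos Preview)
Your proposal is correct and follows essentially the same route as the paper: realize \Cref{Algorithm-7} as \Cref{Algorithm-6b} applied to the dual \eqref{dual}, invoke \Cref{prop:-1}\eqref{prop:-1-part1} (after the parameter reduction you carry out explicitly, which the paper leaves implicit) to get $y_n\to y^\star$ and $s_n\to x^\star-\gamma y^\star$, then recover the primal blocks via the change of variables $s_n-y_n=-\gamma L_1x_{1,n}-\gamma L_3x_{3,n}$ and the vanishing feasibility residual, using injectivity of $L_2,L_3$ to lift convergence of the images to the blocks themselves. The only cosmetic differences are that the paper derives $x_{1,n}\to x_1$ from the strong-convexity inequality of $f_1$ rather than continuity of $\nabla f_1^*$, and it explicitly cites \Cref{thm:Suppose-that-}\eqref{thm-part2} for $y_{n+1}-y_n\to0$ before passing to the limit in the subproblem optimality conditions.
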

  \begin{proof}
  	Let $(x_{1}^{\star},x_{2}^{\star},x_{3}^{\star},y^{\star})$ denote a KKT point of~\eqref{ADMM}, \ie, 
  	\begin{equation} \label{eqnn:-5} 
  	\begin{cases}
  	0\in\partial f_{i}(x_{i}^\star)+L_{i}^{*}y^\star, &\textrm{for}\;\, i=1,2,3\\
  	0 = b-L_{1}x_{1}^\star-L_{2}x_{2}^\star-L_{3}x_{3}^\star.&
  	\end{cases} 
  	\end{equation}
  	\Cref{Algorithm-7} is an implementation of \Cref{Algorithm-6b} (or \Cref{Algorithm-6} in the framework of general monotone operators) when applied to the dual problem~\eqref{dual}. Hence, \Cref{prop:-1} yields $y_n\rightarrow y$, where $y$ is a solution to~\eqref{dual}. Let $x_1$ be a point satisfying $-L_1^*y\in \partial f_1(x_1)$. 	
  	From strong convexity of $f_1$ at $x_1$ and $x_{1,n+1}$, we have
  	\begin{equation*}
  		(\forall u\in\partial f_1({x}_{1,n+1}))(\forall v\in\partial f_1({x}_1))\quad\xi\|x_{1,n+1}-{x}_{1}\|^{2}\leq\langle u-v,x_{1,n+1}-{x}_{1}\rangle.
  	\end{equation*}
  	It follows from the optimality condition for the $x_{1,n+1}$ update and the definition of $x_1$ that
  	\begin{equation*} 
  		\xi\|x_{1,n+1}-{x}_{1}\|^{2}\leq    \langle -L_{1}^{*}y_{n}+L_{1}^{*}{y},x_{1,n+1}-{x}_{1}\rangle \leq \|L_1\|\|x_{1,n+1}-{x}_{1}\|\|y_{n}-{y}\|.
  	\end{equation*}
  	Combine this with the convergence of $(y_n)_{n\in\Nn}$ to derive $x_{1,n}\to x_1$.  From the change of variables to derive \Cref{Algorithm-7} we have
  	\begin{equation} \label{eq:-106}
  	s_{n}-y_{n}=-\gamma L_{1}x_{1,n}-\gamma L_{3}x_{3,n},
  	\end{equation}
  	which together with the convergence of $(s_n)_{n\in\Nn},(y_n)_{n\in\Nn}$ (see \Cref{prop:-1}) and $(x_{1,{n}})_{n\in\Nn}$ imply that $(L_{3}x_{3,n})_{n\in\Nn}$ converges to a point. Since $\ker(L_3)=\{0\}$, it follows that $(x_{3,{n}})_{n\in\Nn}$ converges to some $x_3$. From the optimality condition for the $x_{3,n+1}$ update and the last step in \Cref{Algorithm-7}, we have
  	\begin{equation*}
  		-L_3^*y_{n+1}=-L_3^*\bar{y}_n-\gamma L_3^*(L_{1}x_{1,n+1}+L_{2}x_{2,n+1}+L_{3}x_{3,n+1}-b)\in\partial f_3(x_{3,n+1}). 
  	\end{equation*}
  	Taking the limit and using  \cite[Proposition 20.33(iii)]{bauschke2011convex}, we have $-L_3^*y\in \partial f_3(x_3)$. On the other hand,  \Cref{thm:Suppose-that-}\eqref{thm-part2} and the last line of \Cref{Algorithm-7} yield
  	\begin{equation} \label{eq:-103}
  	y_{n+1}-y_n\rightarrow 0, \quad \textrm{and} \quad L_{1}x_{1,n}+L_{2}x_{2,n}+L_{3}x_{3,n}\to b. 
  	\end{equation}
  	It follows from~\eqref{eq:-106},~\eqref{eq:-103}, and  the convergence of $(s_n)_{n\in\Nn},(y_n)_{n\in\Nn}$ that $(L_{2}x_{2,n})_{n\in\Nn}$ converges to a point. 
  	We can now argue almost exactly as we did for $(L_3x_{3,n})_{n\in\Nn}$. Since $\ker(L_2)=\{0\}$, we deduce that $(x_{2,{n}})_{n\in\Nn}$ converges to some $x_2$. Combine the optimality condition for the $x_{2,n+1}$ update and the last step in \Cref{Algorithm-7} with the convergence of $y_n$, to derive   $-L_2^*y\in \partial f_2(x_2)$. Altogether, we showed that the limit points $(x_1,x_2,x_3,y)$, are jointly optimal by the KKT condition~\eqref{eqnn:-5}.
  	\end{proof}
\begin{rem}
	The convergence rate of \Cref{Algorithm-7} can be deduced similar to \Cref{Algorithm-6} from \Cref{Thm: conv-rates,Thm: conv-rates-ii,thm:Suppose-that--1} with $\rho_n=1$. However, we do not consider it in this paper due to lack of space. 
\end{rem}

\begin{rem}
	In the case when $f_1\equiv 0$ we can choose the limiting value $\theta=2$ and recover the classical ADMM (see \Cref{prop:-1}\eqref{prop:-1-part3}). On the other hand if $f_2$ or $f_3$ vanish, the Alternating Minimization Method (AMM)~\cite{tseng1991applications} is recovered. Finally, when both $f_2$ and $f_3$ vanish then the dual ascent method is recovered. 
\end{rem}
\begin{rem}
	In the recent work \cite[Algorithm 8]{davis2015three}, another 3-block ADMM formulation is presented by following similar algebraic manipulations (It is derived by applying their Algorithm 7 to the dual). It should be noted that they do not require rank assumptions on $L_2,L_3$. 
	In contrast to that work in our version $(x_{1,n})_{n\in\Nn}$ and $(x_{2,n})_{n\in\Nn}$ are  updated in parallel which corresponds to the fact that in \Cref{Algorithm-6} the forward step precedes the first prox step. Furthermore, in our algorithm, $(x_{2,n})_{n\in\Nn}$ and $(x_{3,n})_{n\in\Nn}$ are updated using the augmented Lagrangian at $(\theta-1)y_{n}+(2-\theta)y_{n-1}$ rather than $y_n$. Moreover, the stepsize in \cite[Theorem 2.1]{davis2015three} has to satisfy $\gamma<2\xi /\|L_1\|^2$. This is always larger than the stepsize in~\eqref{eqnn:-4}. Refer to~\Cref{rem:DR} for further discussion, noting that \Cref{Algorithm-7} is derived by setting the relaxation parameter, $\rho_n$, equal to one. 
\end{rem}

\begin{rem}
	Some of the other recent attempts to directly generalize ADMM for 3 blocks include \cite{cai2014direct,chen2014direct,han2012note,li2015convergent,lin2015iteration}. In \cite{chen2014direct}, it was shown through a counterexample that a direct extension of ADMM to more that 2 blocks is not convergent in general. In order to ensure convergence, additional assumptions on strong convexity of the functions or rank of $L_i$'s are needed. 
	In \cite{cai2014direct} the authors require one function to be strongly convex and $L_2$ and $L_3$ to have full column rank, while \cite{li2015convergent} modify the steps with regularization terms and \cite{lin2015iteration} solves a perturbed problem (see \cite{lin2015iteration} and the references therein for further discussion). In contrast to these papers, the first minimization step of \Cref{Algorithm-7} consists of minimizing a normal Lagrangian rather than an augmented one (therefore it can be trivially executed in a distributed fashion in the case where $f_1$ is block-separable) and it can be performed in parallel to the second step.
\end{rem}


\section{Conclusion and Future Work}

In this paper we introduced a unifying  operator splitting technique called \linebreak Asymmetric-Forward-Backward-Adjoint splitting (AFBA) for solving monotone inclusions involving three terms. We discussed how it relates to, unifies and extends classical splitting methods as well as several primal-dual algorithms for structured nonsmooth convex optimization. 
Asymmetric preconditioning is the main feature of AFBA that can lead to several extensions and new algorithmic schemes. Exploring block triangular choices for the operator $H$ in \eqref{eq:-42-42-42} that lead to Gauss-Seidel-type updates for multi-block structured convex optimization problems is certainly a promising research direction. Another research direction involves investigating if the linear operator $M$ can be replaced by a Lipschitz operator.  It is also desirable to study randomized variants of our algorithm for problems with block structure in order to further reduce memory and computational requirements. Some natural applications include distributed optimization, control and image processing.
\bibliographystyle{amsplain}
\bibliography{AFBA_revised}

\providecommand{\bysame}{\leavevmode\hbox to3em{\hrulefill}\thinspace}
\providecommand{\MR}{\relax\ifhmode\unskip\space\fi MR }
\providecommand{\MRhref}[2]{%
  \href{http://www.ams.org/mathscinet-getitem?mr=#1}{#2}
}
\providecommand{\href}[2]{#2}
\begin{thebibliography}{10}

\bibitem{bauschke2011convex}
H.~H. Bauschke and P.~L. Combettes, \emph{Convex analysis and monotone operator
  theory in {H}ilbert spaces}, Springer Science \& Business Media, 2011.

\bibitem{bianchi2015coordinate}
P.~Bianchi, W.~Hachem, and F.~Iutzeler, \emph{A coordinate descent primal-dual
  algorithm and application to distributed asynchronous optimization}, IEEE
  Transactions on Automatic Control (2015).

\bibitem{boct2014recent}
R.~I. Bo{\c{t}}, E.~R. Csetnek, and C.~Hendrich, \emph{Recent developments on
  primal-dual splitting methods with applications to convex minimization},
  Mathematics Without Boundaries, Springer, 2014, pp.~57--99.

\bibitem{boyd2011distributed}
S.~Boyd, N.~Parikh, E.~Chu, B.~Peleato, and J.~Eckstein, \emph{Distributed
  optimization and statistical learning via the alternating direction method of
  multipliers}, Foundations and Trends in Machine Learning \textbf{3} (2011),
  no.~1, 1--122.

\bibitem{briceno2015forward}
L.~M. Brice{\~n}o-Arias, \emph{Forward-{D}ouglas-{R}achford splitting and
  forward-partial inverse method for solving monotone inclusions}, Optimization
  \textbf{64} (2015), no.~5, 1239--1261.

\bibitem{briceno2011monotone+}
L.~M. Brice{\~n}o-Arias and P.~L. Combettes, \emph{A monotone {+} skew
  splitting model for composite monotone inclusions in duality}, SIAM Journal
  on Optimization \textbf{21} (2011), no.~4, 1230--1250.

\bibitem{cai2014direct}
X.~Cai, D.~Han, and X.~Yuan, \emph{The direct extension of {ADMM} for
  three-block separable convex minimization models is convergent when one
  function is strongly convex}, Optimization Online (2014).

\bibitem{chambolle2011first}
A.~Chambolle and T.~Pock, \emph{A first-order primal-dual algorithm for convex
  problems with applications to imaging}, Journal of Mathematical Imaging and
  Vision \textbf{40} (2011), no.~1, 120--145.

\bibitem{chen2014direct}
C.~Chen, B.~He, Y.~Ye, and X.~Yuan, \emph{The direct extension of {ADMM} for
  multi-block convex minimization problems is not necessarily convergent},
  Mathematical Programming (2014), 1--23.

\bibitem{combettes2011proximal}
P.~L. Combettes and I.-C. Pesquet, \emph{Proximal splitting methods in signal
  processing}, Fixed-point algorithms for inverse problems in science and
  engineering, Springer, 2011, pp.~185--212.

\bibitem{combettes2012primal}
P.~L. Combettes and J.-C. Pesquet, \emph{Primal-dual splitting algorithm for
  solving inclusions with mixtures of composite, {L}ipschitzian, and
  parallel-sum type monotone operators}, Set-Valued and variational analysis
  \textbf{20} (2012), no.~2, 307--330.

\bibitem{combettes2014stochastic}
\bysame, \emph{Stochastic quasi-{F}\'{e}jer block-coordinate fixed point
  iterations with random sweeping}, arXiv preprint arXiv:1404.7536 (2014).

\bibitem{combettes2015compositions}
P.~L. Combettes and I.~Yamada, \emph{Compositions and convex combinations of
  averaged nonexpansive operators}, Journal of Mathematical Analysis and
  Applications \textbf{425} (2015), no.~1, 55--70.

\bibitem{condat2013primal}
L.~Condat, \emph{A primal-dual splitting method for convex optimization
  involving {L}ipschitzian, proximable and linear composite terms}, Journal of
  Optimization Theory and Applications \textbf{158} (2013), no.~2, 460--479.

\bibitem{condat2014generic}
\bysame, \emph{A generic proximal algorithm for convex optimization-application
  to total variation minimization}, Signal Processing Letters, IEEE \textbf{21}
  (2014), no.~8, 985--989.

\bibitem{davis2015convergence}
D.~Davis, \emph{Convergence rate analysis of primal-dual splitting schemes},
  SIAM Journal on Optimization \textbf{25} (2015), no.~3, 1912--1943.

\bibitem{davis2014convergence}
D.~Davis and W.~Yin, \emph{Convergence rate analysis of several splitting
  schemes}, arXiv preprint arXiv:1406.4834 (2014).

\bibitem{davis2015three}
\bysame, \emph{A three-operator splitting scheme and its optimization
  applications}, arXiv preprint arXiv:1504.01032 (2015).

\bibitem{dontchev2004regularity}
A.~L. Dontchev and R.~T. Rockafellar, \emph{Regularity and conditioning of
  solution mappings in variational analysis}, Set-Valued Analysis \textbf{12}
  (2004), no.~1-2, 79--109.

\bibitem{dontchev2009implicit}
\bysame, \emph{Implicit functions and solution mappings}, Springer Monographs
  in Mathematics. Springer \textbf{208} (2009).

\bibitem{drori2015simple}
Y.~Drori, S.~Sabach, and M.~Teboulle, \emph{A simple algorithm for a class of
  nonsmooth convex-concave saddle-point problems}, Operations Research Letters
  \textbf{43} (2015), no.~2, 209--214.

\bibitem{eckstein1992douglas}
J.~Eckstein and D.~P. Bertsekas, \emph{On the {D}ouglas-{R}achford splitting
  method and the proximal point algorithm for maximal monotone operators},
  Mathematical Programming \textbf{55} (1992), no.~1-3, 293--318.

\bibitem{han2012note}
D.~Han and X.~Yuan, \emph{A note on the alternating direction method of
  multipliers}, Journal of Optimization Theory and Applications \textbf{155}
  (2012), no.~1, 227--238.

\bibitem{he2012convergence}
B.~He and X.~Yuan, \emph{Convergence analysis of primal-dual algorithms for a
  saddle-point problem: from contraction perspective}, SIAM Journal on Imaging
  Sciences \textbf{5} (2012), no.~1, 119--149.

\bibitem{li2015convergent}
M.~Li, D.~Sun, and K.-C. Toh, \emph{A convergent 3-block semi-proximal {ADMM}
  for convex minimization problems with one strongly convex block},
  Asia-Pacific Journal of Operational Research (2015), 1550024.

\bibitem{liang2014convergence}
Jingwei Liang, Jalal Fadili, and Gabriel Peyr{\'e}, \emph{Convergence rates
  with inexact non-expansive operators}, Mathematical Programming (2014),
  1--32.

\bibitem{lin2015iteration}
T.~Lin, S.~Ma, and S.~Zhang, \emph{Iteration complexity analysis of multi-block
  {ADMM} for a family of convex minimization without strong convexity}, arXiv
  preprint arXiv:1504.03087 (2015).

\bibitem{lions1979splitting}
P.-L. Lions and B.~Mercier, \emph{Splitting algorithms for the sum of two
  nonlinear operators}, SIAM Journal on Numerical Analysis \textbf{16} (1979),
  no.~6, 964--979.

\bibitem{moreau1965proximite}
J.-J. Moreau, \emph{Proximit{\'e} et dualit{\'e} dans un espace {H}ilbertien},
  Bulletin de la Soci{\'e}t{\'e} math{\'e}matique de France \textbf{93} (1965),
  273--299.

\bibitem{parikh2013proximal}
N.~Parikh and S.~Boyd, \emph{Proximal algorithms}, Foundations and Trends in
  optimization \textbf{1} (2013), no.~3, 123--231.

\bibitem{pesquet2014class}
J.-C. Pesquet and A.~Repetti, \emph{A class of randomized primal-dual
  algorithms for distributed optimization}, arXiv preprint arXiv:1406.6404
  (2014).

\bibitem{rockafellar2009variational}
R.~T. Rockafellar and R.~J.-B. Wets, \emph{Variational analysis}, vol. 317,
  Springer Science \& Business Media, 2009.

\bibitem{solodov1996modified}
M.~V. Solodov and P.~Tseng, \emph{Modified projection-type methods for monotone
  variational inequalities}, SIAM Journal on Control and Optimization
  \textbf{34} (1996), no.~5, 1814--1830.

\bibitem{tseng1991applications}
P.~Tseng, \emph{Applications of a splitting algorithm to decomposition in
  convex programming and variational inequalities}, SIAM Journal on Control and
  Optimization \textbf{29} (1991), no.~1, 119--138.

\bibitem{tseng2000modified}
\bysame, \emph{A modified forward-backward splitting method for maximal
  monotone mappings}, SIAM Journal on Control and Optimization \textbf{38}
  (2000), no.~2, 431--446.

\bibitem{vu2013splitting}
B.~C. V{\~u}, \emph{A splitting algorithm for dual monotone inclusions
  involving cocoercive operators}, Advances in Computational Mathematics
  \textbf{38} (2013), no.~3, 667--681.

\end{thebibliography}
\renewenvironment{thm}[1][]{\begin{appendixthm}[#1]}{\end{appendixthm}}
\renewenvironment{prop}[1][]{\begin{appendixprop}[#1]}{\end{appendixprop}}
\section{Appendix}
The proofs omitted in the main text are listed below. 
\begin{Proof}{Proof of \Cref{prop:-1}} \label{proof of prop:-1}
\eqref{prop:-1-part1}:
Let us start by noting that $P$ and $S$ defined in~\eqref{eq:-42-all} are special cases of~\eqref{eq:-17-1} and~\eqref{eqn:S} when $L=\id$, $\gamma_1=\gamma_2^{-1}=\gamma$ and, as such, are strongly positive if $\theta\in[0,2[$ (see \Cref{lem:strongly positive p-1,lem:strongly}). 
Next, consider \hyperlink{alg:line3}{step 3} of \Cref{Algorithm-1} and Substitute the parameters defined in~\eqref{eq:-42-all}, to derive 
	 \begin{align}\frac{\alpha_{n}}{\lambda_n} & = \frac{\|\tilde{z}_{n}\|_{P}^{2}}{\|\tilde{z}_{n}\|_{D}^{2}} = \frac{\gamma^{-1}\|\tilde{x}_{n}\|^{2}+\gamma\|\tilde{y}_{n}\|^{2}-\theta\langle\tilde{x}_{n},\tilde{y}_{n}\rangle}{\gamma^{-1}(\theta^2-3\theta+3)\|\tilde{x}_n\|^2+\gamma\|\tilde{y}_n\|^2+2(1-\theta)\langle \tilde{x}_n,\tilde{y}_n \rangle}, \label{eq:prop1}\end{align}
	 where $D$ is defined in~\eqref{eq:D} and by construction is strongly positive. 
 Let $\lambda_n$ be equal to the inverse of the right hand side in~\eqref{eq:prop1} multiplied by $\rho_n$ where $(\rho_n)_{n\in\Nn}\subseteq]0,2[$. This would result in $\alpha_n=\rho_n$ and simplify the iterations. Consider $C$ defined in~\eqref{eq:Copt1}, we have
\[
\|Cz-Cz^{\prime}\|^2_{P^{-1}} = \gamma\left(1-\tfrac{1}{4}\theta^2\right)^{-1}\|Cz-Cz^{\prime}\|^2.
\] 
From here it is easy to see that $C$ is $\beta$-cocoercive with respect to $P$ norm, where $\beta=\eta\gamma^{-1}(1-\frac{1}{4}\theta^2)$.  
 In order to apply \Cref{thm:Suppose-that-},  condition~\eqref{eq:-7} must hold. From strong positivity of $D$, boundedness of $P$ and the fact that $\rho_n$ is uniformly bounded above $0$ it follows that $(\lambda_{n})_{n\in\Nn}\subseteq]\nu_1,\infty[$ for some positive $\nu_1$. Let $\nu_2$ be a positive parameter such that
 \begin{equation} \label{eqn:-22}
 \nu_2<2-\frac{1}{2\beta}-\frac{2\rho_n(2+\sqrt{2-\theta})}{2+\theta},
 \end{equation} 
 holds for all $n\in\Nn$. 
 It's easy to verify that such $\nu_2$ exists as long as $\rho_n$ is uniformly bounded in the interval~\eqref{eq:rho}. For brevity, we define $\beta^\prime$ such that $\frac{1}{2\beta^\prime}=\frac{1}{2\beta}+\nu_2$. Additionally, 
 introduce the notation $\upsilon=\rho_n^{-1}(2-\frac{1}{2\beta^\prime})$,
 $\omega_{1}=\upsilon\theta+2(1-\theta)$, $\omega_{2}=\theta^2-3\theta+3$. 
We proceed by showing that $\lambda_{n}$ is smaller than $2-\frac{1}{2\beta}-\nu_2$. A sufficient condition for this to hold is
\begin{equation} \label{eq:-544}
\xi=\gamma^{-1}(\upsilon-\omega_2)\|\tilde{x}_{n}\|^{2}+\gamma(\upsilon-1)\|\tilde{y}_{n}\|^{2}-\omega_{1}\langle\tilde{x}_{n},\tilde{y}_{n}\rangle>0.
\end{equation}
Apply the Fenchel-Young inequality for $\frac{\epsilon}{2}\|\cdot\|^2$ to lower bound $\xi$:
\begin{equation}
\xi \geq  \left(\gamma^{-1}(\upsilon-\omega_2)-|\omega_{1}|\tfrac{\epsilon}{2}\right)\|\tilde{x}_{n}\|^{2}+\left(\gamma(\upsilon-1)-|\omega_1|\tfrac{1}{2\epsilon}\right)\|\tilde{y}_{n}\|^{2}, \label{eq:-42-n}
\end{equation}
where $|\cdot|$ is the absolute value. It follows from~\eqref{eqn:-22} that $\upsilon>1$ for all $n\in\Nn$.  Let $\epsilon=\frac{|\omega_{1}|}{2\gamma(\upsilon-1)}$ so that the term involving $\|\tilde{y}_n\|^2$ in~\eqref{eq:-42-n} disappears. We obtain
\begin{align}
& \gamma^{-1}\left(\upsilon-\omega_2-\tfrac{\omega_1^2}{4(\upsilon-1)}\right)\|\tilde{x}_{n}\|^{2}>0, \label{eq:-42-nn}
\end{align}
which is sufficient for~\eqref{eq:-544} to hold. By substituting $\omega_{1}$ and $\omega_{2}$ and after some algebraic manipulations we find that the condition~\eqref{eqn:-22} is sufficient for the right hand side in~\eqref{eq:-42-nn} to be positive. Consequently,  \Cref{thm:Suppose-that-} completes the proof of convergence. We showed that we can set $\alpha_n=\rho_n$ by choosing $\lambda_n$ appropriately. 
\Cref{Algorithm-6} follows by setting  $\alpha_n=\rho_n$, a change of variables $s_n=x_n-\gamma y_n$, substituting $x_{n+1}$ and application of Moreau's identity.  

\eqref{prop:-1-part2}: Mimic the proof of~\eqref{prop:-1-part1}, but use \Cref{thm:Suppose-that-} with $C\equiv0$, by showing that $\lambda_n$ is uniformly bounded between $0$ and $2$. 

\eqref{prop:-1-part3}:  When $\theta=2$, we have $P\in \mathcal{S}(\mathcal{K})$, $P\succeq0$. It follows from \eqref{eq:Aopt1},~\eqref{eq:-42-H} and \Cref{lem:separable} that $(H+A)^{-1}P$ is continuous. Therefore, since $F\equiv0$, by appealing to  \Cref{thm:Suppose-that--1} and following the same change of variables as in previous parts the assertion is proved. 
\end{Proof}
\begin{Proof}{Proof of \Cref{porp:-1-conv}} \label{proof of prop:-1-conv}
Following the argument in proof of \Cref{prop:-1}\eqref{prop:-1-part3} and \Cref{thm:Suppose-that--1}\eqref{thm-P-part4} yields
\begin{equation}\label{eq:-100}
 \|{P}z_{n+1}-Pz_n\|^{2}\leq\tfrac{\|P\|}{\tau(n+1)}\|Qz_{0}-Qz^{\star}\|_{R}^{2},
\end{equation}
 and $\|Pz_{n+1}-Pz_n\|^{2}=o(1/(n+1))$, where $z_n= \left(x_n,\gamma^{-1}(x_n-s_n)\right)$. Combine this with definition of $P$, ~\eqref{eq:-42-p} with $\theta=2$, to derive
 \begin{equation} \label{eq:-101}
 \|{P}z_{n+1}-Pz_n\|^{2}= (1+\gamma^{-2})\|s_{n+1}-s_n\|^2.
 \end{equation}
 Furthermore, simple calculation shows that $P^2=(\gamma+\gamma^{-1})P$. Hence for all $z\in\mathcal{K}$
 \begin{align}
\|Pz\|^2&=(\gamma+\gamma^{-1})\langle z,Pz\rangle \nonumber\\
&=  (\gamma+\gamma^{-1})\left(\gamma^{-1}\|x\|^2+\gamma\|y\|^2-2\langle x,y \rangle\right)\nonumber\\
& \leq (\gamma+\gamma^{-1})^2\|z\|^2,\label{eq:pnorm}
 \end{align}
 where we used Fenchel-Young inequality with $\epsilon=\gamma$. It follows from~\eqref{eq:pnorm} that $\|P\|\leq(\gamma+\gamma^{-1})$. 
 Combining this with~\eqref{eq:-100} and~\eqref{eq:-101} completes the proof.
\end{Proof}

\begin{Proof}{Proof of \Cref{lem: cocoercivity lemma}}
\label{proof of lem: cocoercivity lemma}
For every $z$ and $z_1$ in $\mathcal{K}$ we have 
\begin{align}
	\langle Cz-Cz^{\prime},z-z^{\prime}\rangle & = \langle \nabla h(x)-\nabla h(x^{\prime}),x-x^\prime \rangle + \langle \nabla l^*(y)-\nabla l^*(y^{\prime}),y-y^\prime \rangle \nonumber \\
	&
	\geq \frac{1}{\beta_{h}}\|\nabla h(x)-\nabla h(x^{\prime})\|^{2} +\frac{1}{\beta_{l}}\|\nabla l^*(y)-\nabla l^*(y^{\prime})\|^{2} \nonumber \\
	& 
	\geq \min \{\beta_{l}^{-1},\beta_{h}^{-1}\}\| Cz-Cz^{\prime} \|^{2}. \label{eq:-12-32}
\end{align}
It follows from \Cref{lem:strongly positive p-1} that $P\in\mathcal{S}_\tau(\mathcal{K})$ with $\tau$ defined in~\eqref{eq:-42-42}. Therefore, we have $\|P^{-1}\|\leq\tau^{-1}$.
Using \eqref{eq:-12-32}, we have
\begin{align*}
	\langle Cz-Cz^{\prime},z-z^{\prime}\rangle & \geq \min \{\beta_{l}^{-1},\beta_{h}^{-1}\}\| Cz-Cz^{\prime} \|^{2} \\
	& 
	\geq \|P^{-1}\|^{-1}\min \{\beta_{l}^{-1},\beta_{h}^{-1}\}\| Cz-Cz^{\prime} \|_{P^{-1}}^{2} \\
	& \geq \tau\min \{\beta_{l}^{-1},\beta_{h}^{-1}\}\| Cz-Cz^{\prime} \|_{P^{-1}}^{2}.
\end{align*}
Hence, $C$ is $\beta$-cocoercive with respect to $\|\cdot\|_P$, where $\beta=\tau\min \{\beta_{l}^{-1},\beta_{h}^{-1}\}$.
In the case when $l=\iota_{\{0\}}$, we have $\nabla l^{*}=0$ and it is possible to derive less conservative parameters. Define the linear
operator $Q:(x,y)\rightarrow(x,0)$, then, 
\begin{align*}
	\|Qz\|_{P^{-1}}^{2} & =\langle x,(\gamma_{1}^{-1}\id-\frac{\gamma_{2}}{4}\theta^{2}L^{*}L)^{-1}x\rangle \leq\|x\|^{2}\|(\gamma_{1}^{-1}\id-\frac{\gamma_{2}}{4}\theta^{2}L^{*}L)^{-1}\| 
	\\
	& \leq\|Qz\|^{2}(\gamma_{1}^{-1}-\frac{\gamma_{2}}{4}\theta^{2}\|L\|^{2})^{-1},
\end{align*}
where we used \eqref{eq:-17-1}. 
For $\beta_{h}\in]0,+\infty[$, considering the above inequality and the fact that $Cz=(\nabla h(x),0)$
we obtain
\begin{align*}
	\|Cz-Cz^{\prime}\|_{P^{-1}}^{2} & \leq(\gamma_{1}^{-1}-\frac{\gamma_{2}}{4}\theta^{2}\|L\|^{2})^{-1}\|\nabla h(x)-\nabla h(x^{\prime})\|^{2}\\
	& \leq\beta_{h}(\gamma_{1}^{-1}-\frac{\gamma_{2}}{4}\theta^{2}\|L\|^{2})^{-1}\langle \nabla h(x)-\nabla h(x^{\prime}),x-x^\prime \rangle\\
	& =\beta_{h}(\gamma_{1}^{-1}-\frac{\gamma_{2}}{4}\theta^{2}\|L\|^{2})^{-1}\langle Cz-Cz^{\prime},z-z^\prime \rangle,
\end{align*}
where the last inequality follows from the Baillon-Haddad theorem \cite[Corollary 18.16]{bauschke2011convex}. 
This shows that $C$ is $\beta$-cocoercive
with respect to $\|\cdot\|_P$, with $\beta$ defined in~\eqref{eq:-30}.
\end{Proof}
\begin{Proof}{Proof of \Cref{lem:strongly}}
\label{proof of lem:strongly} 
 We start by showing that $S_2$ is strongly positive. \Cref{lem:strongly positive p-1} asserts that $P\in\mathcal{S}_\tau(\mathcal{K})$ with $\tau$ defined in~\eqref{eq:-42-42}. Simple algebra shows that $\tau\leq \min\{\gamma_1^{-1},\gamma_2^{-1}\}$. If $\theta\in[2,\infty[$  define a constant $c$, such that $0<c<\tau$, otherwise if $\theta\in[0,2[$ set $c$ such that $0<c<\min\{\tau,\frac{(2-\theta)\gamma_{2}^{-1}}{4}\}$. Then for all $z=(x,y)\in\mathcal{K}$, $\langle (P-c\id)z,z \rangle>0$  which implies
\begin{equation}
(\gamma_{1}^{-1}-c)\|x\|^2-\tfrac{\theta^{2}}{4(\gamma_{2}^{-1}-c)}\|Lx\|^2 > 0.\label{eq:-36}
\end{equation}
It follows from~\eqref{eq:S-6} that
\begin{align*}
\langle z,S_{2}z\rangle & =   \gamma_{1}^{-1}\|x\|^{2}-2\langle Lx,y\rangle+\gamma_{2}^{-1}\|y\|^{2}+\gamma_{2}(2-\theta)\|Lx\|^{2}  \\ &
  =   c\|z\|^{2}+(\gamma_{1}^{-1}-c)\|x\|^{2}-2\langle Lx,y\rangle+(\gamma_{2}^{-1}-c)\|y\|^{2}+\gamma_{2}(2-\theta)\|Lx\|^{2} \\ &
  \geq  c\|z\|^{2}+(\gamma_{1}^{-1}-c)\|x\|^{2}+(\gamma_{2}^{-1}-c-\epsilon)\|y\|^{2}+(\gamma_{2}(2-\theta)-\epsilon^{-1})\|Lx\|^{2}, 
\end{align*}
 where the inequality follows from  Fenchel-Young inequality for $\frac{\epsilon}{2}\|\cdot\|^2$.
Let $\epsilon=\gamma_{2}^{-1}-c$ so that the term involving $\|y\|^2$ disappears, and use \eqref{eq:-36} to derive
\begin{align}
\langle z,S_{2}z\rangle & \geq\ c\|z\|^{2}+(\gamma_{1}^{-1}-c)\|x\|^{2}+\left(\gamma_{2}\left(2-\theta\right)-\tfrac{1}{\gamma_{2}^{-1}-c}\right)\|Lx\|^{2} \nonumber\\
 & > c\|z\|^{2}+\left(\gamma_{2}(2-\theta)+\left(\tfrac{\theta^{2}}{4}-1\right)\tfrac{1}{\gamma_{2}^{-1}-c}\right)\|Lx\|^{2}. \label{eqn:-7}
\end{align}
It follows from the choice of $c$ for both $\theta\in[0,2[$ and $\theta\in[2,\infty[$ that the last term in~\eqref{eqn:-7} is nonnegative and hence, $S_{2}\in\mathcal{S}_c(\mathcal{K})$. The analysis for $S_1$ is similar and have been omitted here. Hence, it follows that $S_1^{-1}$ and $S_2^{-1}$ are strongly positive and so is their convex combination and its inverse, $S$. Furthermore, It was shown in~\eqref{eq:D pos} that if $S$ and $P$ are strongly positive, $K$ skew adjoint and $M$ monotone, then  $D\in\mathcal{S}_\nu(\mathcal{K})$ for some $\nu\in]0,\infty[$, where $D$ was defined in~\eqref{eq:D}. We conclude by noting that $D$ in \eqref{eqn:D} can be derived by substituting $S$ in~\eqref{eq:D}.
\end{Proof}
\begin{proof}[Proof of \Cref{lem:mu=00003D1/2}]
\label{proof of lem:mu=00003D1/2}
  \Cref{Algorithm-4} is a special case of \Cref{Algorithm-2} and consequently of \Cref{Algorithm-1}. Let us establish the proof by directly showing that~\Cref{thm:Suppose-that-} is applicable with $(\lambda_n)_{n\in\Nn}$ selected such that $\alpha_{n}=1$,
  while satisfying~\eqref{eq:-7}. For all \eqref{lem:mu=00003D1/2:part1}-\eqref{lem:mu=00003D1/2:part3},  strong positivity of the operators $P$, $S$, and $D$  follow from \Cref{lem:strongly positive p-1,lem:strongly}. Set
  \begin{align} \label{eqn:-23}
  	\lambda_{n} = \frac{\|\tilde{z}_{n}\|_{D}^{2}}{\|\tilde{z}_{n}\|_{P}^{2}}= 1+\frac{\gamma_{2}\|L\tilde{x}_{n}\|^{2}+\gamma_{1}\|L^{*}\tilde{y}_{n}\|^{2}}{\gamma_{1}^{-1}\|\tilde{x}_{n}\|^{2}+\gamma_{2}^{-1}\|\tilde{y}_{n}\|^{2}}.
  \end{align}
  Such a choice would yield $\alpha_n=1$. A sufficient condition for~\eqref{eq:-7} to hold is to show that $(\lambda_n)_{n\in\Nn}$ is uniformly bounded between $0$ and $\delta$. 
  From strong positivity of $D$ and boundedness of $P$ it follows that $(\lambda_{n})_{n\in\Nn}\subseteq[\nu_1,\infty[$ for some positive $\nu_1$. Denote by $\beta$ the cocoercivity constant of $C$ with respect to $\|\cdot\|_P$. We proceed by showing that $(\lambda_{n})_{n\in\Nn}\subseteq [\nu_1,2-\frac{1}{2\beta}-\nu_2]$. Assume that 
  \begin{equation}\label{eqn:-16}
  0<1-\gamma_1\gamma_2\|L\|^2-\tfrac{1}{2\beta}.
  \end{equation}
  Then, there exists a positive constant $\nu_2$ such that
  \begin{equation} \label{eqn:-17}
  \nu_2<1-\gamma_1\gamma_2\|L\|^2-\tfrac{1}{2\beta}.
  \end{equation} 
  It follows from~\eqref{eqn:-23} that 
  \begin{align*} 
  	\lambda_{n} \leq  1+\frac{\gamma_{2}\|L\|^2\|\tilde{x}_{n}\|^{2}+\gamma_{1}\|L\|^2\|\tilde{y}_{n}\|^{2}}{\gamma_{1}^{-1}\|\tilde{x}_{n}\|^{2}+\gamma_{2}^{-1}\|\tilde{y}_{n}\|^{2}}
  	<
  	2-\tfrac{1}{2\beta}-\nu_2,
  \end{align*}
  where in the second inequality we used~\eqref{eqn:-17} to upper bound both $\gamma_{2}\|L\|^2$ and $\gamma_{1}\|L\|^2$.  
  We showed that~\eqref{eqn:-16} is sufficient for~\eqref{eq:-7} to hold. Consequently, \Cref{thm:Suppose-that-} completes the proof of convergence. 
  
  \eqref{lem:mu=00003D1/2:part1}: By \Cref{lem: cocoercivity lemma}, substitute $\beta$ in~\eqref{eqn:-16} with \eqref{eq:-42-43} to derive~\eqref{eq:-42-42-42-42}. 
  The algorithm follows by substituting $\alpha_{n}=1$ in  \Cref{Algorithm-2}
  when $\theta=0$ and $\mu=\frac{1}{2}$.
  
  \eqref{lem:mu=00003D1/2:part2}: Similar to \eqref{lem:mu=00003D1/2:part1}, since $l=\iota_{\{0\}}$, substitute $\beta$ in~\eqref{eqn:-16} with \eqref{eq:-30}, i.e. $\beta=\beta_{h}^{-1}\gamma_{1}^{-1}$. This choice of $\beta$ results in~\eqref{eq:-1-1-1} and the algorithm follows.
  
  \eqref{lem:mu=00003D1/2:part3}: The proof is similar to previous parts, but we show that $(\lambda_n)_{n\in\Nn}$ is uniformly bounded between $0$ and $2$, and then use \Cref{thm:Suppose-that-} with $C\equiv0$. 
\end{proof}

\end{document}